\title[Perverse sheaves on symmetric products of the plane]{Perverse sheaves on symmetric products of the plane}
\author{Tom Braden}
\author{Carl Mautner}
\address{Tom Braden \\
Dept.\ of Mathematics and Statistics\\
         University of Massachusetts, Amherst}
\email{braden@math.umass.edu}
\address{Carl Mautner \\
Department of Mathematics\\
University of California, Riverside}
\email{carl.mautner@ucr.edu}
\subjclass[2020]{14F08, 14C05, 20C30}
\DeclareFontFamily{U}{mathx}{\hyphenchar\font45}
\DeclareFontShape{U}{mathx}{m}{n}{
      <5> <6> <7> <8> <9> <10>
      <10.95> <12> <14.4> <17.28> <20.74> <24.88>
      mathx10
      }{}
\DeclareSymbolFont{mathx}{U}{mathx}{m}{n}
\DeclareMathAccent{\widecheck}{\mathord}{mathx}{"71}
\newtheorem{theorem}{Theorem}[section]
\newtheorem*{theorem*}{Theorem}
\newtheorem{lemma}[theorem]{Lemma}
\newtheorem{proposition}[theorem]{Proposition}
\newtheorem{corollary}[theorem]{Corollary}
\newtheorem{claim}[theorem]{Claim}
\theoremstyle{definition}
\newtheorem{definition}[theorem]{Definition}
\newtheorem{example}[theorem]{Example}
\newtheorem{remark}[theorem]{Remark}
\newtheorem*{remark*}{Remark}
\newcommand{\excise}[1]{}
\newcommand{\rank}{\operatorname{rk}}
\newcommand{\id}{\operatorname{id}}
\newcommand{\Coker}{\operatorname{Coker}}
\newcommand{\Ker}{\operatorname{Ker}}
\renewcommand{\dim}{\operatorname{dim}}
\newcommand{\codim}{\operatorname{codim}}
\newcommand{\Hom}{\operatorname{Hom}}
\newcommand{\End}{\operatorname{End}}
\newcommand{\RHom}{{R\mathcal{H}\mathit{\!om}}}
\newcommand{\Ext}{\operatorname{Ext}}
\newcommand{\C}{{\mathbb{C}}}
\newcommand{\Z}{{\mathbb{Z}}}
\newcommand{\R}{{\mathbb{R}}}
\renewcommand{\iff}{\Leftrightarrow}
\newcommand{\D}{\mathbb D}
\newcommand{\IC}{\mathop{\mathbf{IC}}}
\newcommand{\cal}{\mathcal}
\newcommand{\cA}{{\cal A}}
\newcommand{\cB}{{\cal B}}
\newcommand{\cC}{{\cal C}}
\newcommand{\cF}{{\cal F}}
\newcommand{\cG}{{\cal G}}
\newcommand{\cH}{{\cal H}}
\newcommand{\cL}{{\cal L}}
\newcommand{\cN}{{\cal N}}
\newcommand{\cO}{{\cal O}}
\newcommand{\cT}{{\cal{T}}}
\newcommand{\cS}{{\mathscr S}}
\newcommand{\cU}{{\cal U}}
\newcommand{\cV}{{\cal V}}
\newcommand{\cW}{{\cal W}}
\newcommand{\bI}{{\mathbf I}}
\newcommand{\bone}{{\mathbf 1}}
\newcommand{\sS}{\mathscr S}
\newcommand{\fS}{\mathfrak S}
\newcommand{\wt}{\widetilde}
\newcommand{\md}{\mathrm{-mod}}
\renewcommand{\setminus}{\smallsetminus}
\newcommand{\kk}{{\Bbbk}}
\newcommand{\uk}{\underline{\kk}} % constant sheaf
\renewcommand{\emptyset}{\varnothing}
\DeclareMathOperator{\Perv}{Perv}
\DeclareMathOperator{\Vect}{Vect}
\renewcommand{\Vec}{\operatorname{\mathsf{Vec}}}
\DeclareMathOperator{\Sh}{Sh}
\DeclareMathOperator{\Hilb}{Hilb}
\DeclareMathOperator{\Fun}{\mathsf{Fun}}
\DeclareMathOperator{\Mod}{\mathsf{Mod}}
\DeclareMathOperator{\Bimod}{\mathsf{Bimod}}
\DeclareMathOperator{\GL}{GL}
\DeclareMathOperator{\Conf}{Conf}
\newcommand{\mmod}{\operatorname{-mod}}
\newcommand{\Ind}{\mathop{\mathrm{Ind}}\nolimits}
\newcommand{\bs}{\backslash}
\newcommand{\Pol}{\operatorname{\mathsf{Pol}}}
\newcommand{\Emb}{\operatorname{\mathsf{Emb}}}
\newcommand*{\Part}[1]{\operatorname{\mathsf{Part}}(#1)}
	\def\MR#1{}
\begin{document}

\setlist{itemsep=.8ex}

\maketitle

\begin{abstract}
For any field $\kk$, we give an algebraic description of the category $\Perv_\cS(S^n (\C^2),\kk)$ of perverse sheaves on the $n$-fold symmetric product of the plane $S^n(\C^2)$ constructible with respect to its natural stratification and with coefficients in $\kk$.  In particular, we show that it is equivalent to the category of modules over a new algebra that is closely related to the Schur algebra.
As part of our description we obtain an analogue of modular Springer theory for the Hilbert scheme $\Hilb^n(\C^2)$ of $n$ points in the plane with its Hilbert-Chow morphism.
\end{abstract}

%%%%%%%%%%%%%%%%%%%
\section{Introduction}
%%%%%%%%%%%%%%%%%%%

Let $\kk$ be a field of arbitrary characteristic and $n,d\geq 0$ be integers.  The Schur algebra $S_\kk(n,d)$ can be defined as the finite dimensional algebra 
$$S_\kk(n,d):= \End_{\fS_d} ((\kk^n)^{\otimes d}),$$ where the symmetric group $\fS_d$ acts on $(\kk^n)^{\otimes d}$ by permuting the tensor factors.

Let $\cN_{\GL_n} \subset \mathfrak{gl}_n(\C)$ denote the cone of nilpotent matrices and $\pi: \widetilde{\cN} \to \cN_{\GL_n}$ denote the Springer resolution. Recall that $\cN_{\GL_n}$ is a symplectic singularity and $\pi$ is a symplectic resolution.  Let $\Perv_{\GL_n}(\cN_{\GL_n},\kk)$ denote the abelian category of $\GL_n$-equivariant perverse sheaves with coefficients in $\kk$ on $\cN_{\GL_n}$.

The results of this paper are motivated by the following theorem:
\begin{theorem*}[\cite{Mautner}]
There is an equivalence of categories
\[\Perv_{\GL_n}(\cN_{\GL_n},\kk) \simeq S_\kk(n,n) \md.\]
\end{theorem*}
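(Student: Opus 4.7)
The plan is to produce a projective generator of $\Perv_{\GL_n}(\cN_{\GL_n},\kk)$ whose endomorphism algebra is $S_\kk(n,n)$, and then invoke Morita reconstruction. For each partition $\lambda\vdash n$, let $P_\lambda\subset\GL_n$ be the standard parabolic of Levi type $\lambda$, let $\pi_\lambda\colon\widetilde{\cN}_\lambda\to\cN_{\GL_n}$ be the associated partial Grothendieck--Springer map (the moment map from the cotangent bundle of $\GL_n/P_\lambda$), and set
\[ \cP_\lambda \;:=\; \pi_{\lambda,*}\uk_{\widetilde{\cN}_\lambda}[\dim\widetilde{\cN}_\lambda], \qquad \cP \;:=\; \bigoplus_{\lambda\vdash n} \cP_\lambda. \]
Each $\pi_\lambda$ is proper and semismall, so $\cP_\lambda \in \Perv_{\GL_n}(\cN_{\GL_n},\kk)$, and $\cP$ is the candidate projective generator.

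The argument then proceeds in three steps. \textbf{(i)} Since $\GL_n/P_\lambda$ and the Springer fibers $\pi_\lambda^{-1}(x)$ are paved by affine spaces, each $\cP_\lambda$ is a $\GL_n$-equivariant parity sheaf in the sense of Juteau--Mautner--Williamson, so the standard parity-vanishing argument gives $\operatorname{Ext}^1(\cP,\cF)=0$ for every simple $\cF$ in $\Perv_{\GL_n}(\cN_{\GL_n},\kk)$; hence $\cP$ is projective. \textbf{(ii)} Because the component groups of nilpotent stabilizers in $\GL_n$ are trivial, the simple equivariant perverse sheaves are precisely $\IC(\ol{\cO}_\mu,\kk)$ for $\mu\vdash n$, and each of these arises as a composition factor of some $\cP_\lambda$ (for instance of the full Springer sheaf $\cP_{(1^n)}$, by a support estimate on its restriction to the relevant orbit), so $\cP$ is a generator. \textbf{(iii)} By the convolution formalism of Chriss--Ginzburg,
\[ \End(\cP) \;\cong\; \bigoplus_{\lambda,\mu\vdash n} H^{BM}_*\bigl(\widetilde{\cN}_\lambda\times_{\cN_{\GL_n}}\widetilde{\cN}_\mu,\kk\bigr), \]
with multiplication given by convolution along the middle factor, and one identifies this algebra with $S_\kk(n,n)$ via a geometric incarnation of Schur--Weyl duality.

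The main obstacle is step (iii): identifying the Borel--Moore convolution algebra with the Schur algebra integrally. In characteristic zero this follows from classical results of Chriss--Ginzburg and Ginzburg, but the modular statement requires an integral lift of the argument. The cleanest strategy I see is to realize the permutation bimodule $\bigoplus_\lambda \kk[\fS_n/\fS_\lambda]\cong (\kk^n)^{\otimes n}$ geometrically, as the sum over $\lambda$ of the top Borel--Moore homology of Springer fibers $\pi_\lambda^{-1}(e)$ at a regular nilpotent $e$, and then deduce from this and the $\fS_n$-equivariance of the convolution action that the two algebras coincide on the nose. Granted step (iii), steps (i) and (ii) together with the Morita equivalence $\Hom(\cP,-)\colon \Perv_{\GL_n}(\cN_{\GL_n},\kk)\xrightarrow{\sim}\End(\cP)^{\mathrm{op}}\md$ deliver the theorem.
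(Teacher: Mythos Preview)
This theorem is not proved in the present paper: it is quoted from \cite{Mautner} as motivation for the main result, and the paper gives no argument for it. There is therefore no proof here to compare your proposal against.

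That said, your outline is essentially the strategy of the cited reference, and the paper's own main theorem is proved by the analogous template (projective generator from pushforwards along semismall resolutions, then computation of the endomorphism ring). One comment on step (iii): a cleaner route than computing Borel--Moore homology of the fiber products directly is to use that $\End(\cP_{(1^n)})\cong\kk[\fS_n]$ integrally (the Springer-sheaf endomorphism theorem, valid over any $\kk$ by Juteau), and that each $\cP_\lambda$ is the summand of $\cP_{(1^n)}$ cut out by the idempotent projecting onto the permutation module $\kk[\fS_n/\fS_\lambda]$. This identifies $\End(\cP)$ with $\End_{\kk[\fS_n]}\bigl(\bigoplus_\lambda \kk[\fS_n/\fS_\lambda]\bigr)$, which is the convolution algebra $\bigoplus_{\lambda,\mu}\kk[\fS_\mu\backslash\fS_n/\fS_\lambda]$ described in Section~\ref{sec:relation with Schur}, hence Morita equivalent to $S_\kk(n,n)$. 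This avoids having to verify the integral Chriss--Ginzburg identification by hand.
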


Our main result is an analogous theorem for another conical symplectic singularity, the $n$-fold symmetric product of the plane $S^n(\C^2) = (\C^2)^n/\fS_n$.

For a set $A$, let $\Sh(A)$ denote the category of sheaves of complex vector spaces on $A$ (equivalently, $A$-graded complex vector spaces $\Vec_A$).  Consider the category of sheaves on the symmetric group $\Sh(\fS_d)$ as a tensor category with tensor structure coming from convolution of sheaves on the group $\fS_d$.

\begin{definition}
Let $[n]=\{1,\ldots,n\}$. We define the \textit{Hilbert-Schur algebra} $HS_\kk(n,d)$ to be the finite dimensional algebra: 
\[ HS_\kk(n,d) := K[\End_{\Sh(\fS_d)} (\Sh([n]^d))] \otimes_\Z \kk. \]
Here $\Sh([n]^d)$ is viewed as a module category over the tensor category $\Sh(\fS_d)$, we write $\End_{\Sh(\fS_d)} (\Sh([n]^d))$ to denote the tensor category of (additive) endofunctors of $\Sh([n]^d)$ as a $\Sh(\fS_d)$-module category, and the symbol $K$ denotes the Grothendieck ring.   

The reader who is not familiar with the theory of tensor categories can look at an alternative  description of this algebra given in Theorem \ref{cor:main} via $K$-theory of equivariant sheaves on finite sets.  We also give a presentation by generators and relations later in Section \ref{sec:the ring R}.

%In the notation of~\cite{EGNO}, $\End_{\Sh(\fS_d)}(\Sh([n]^d)) = (\Vec_{\fS_d})^*_{\Vec_{[n]^d}}$ is the dual tensor category to $\Vec_{\fS_d}$ with respect to $\Vec_{[n]^d}$.\footnote{We encourage readers who are uncomfortable with the language of tensor categories to read on as we will give an alternative generators and relations approach in Section~\ref{sec:2-category}.}
\end{definition}

Let $\cS$ denote the natural stratification of $S^n(\C^2)$ and let $\Perv_\cS(S^n (\C^2),\kk)$ denote the abelian category of $\cS$-constructible perverse sheaves with coefficients in $\kk$ on $S^n(\C^2)$.

Our main result is:  
\begin{theorem}\label{thm-equiv}
There is an equivalence of categories
\[\Perv_\cS(S^n (\C^2),\kk) \simeq HS_\kk(n,n) \md.\]
\end{theorem}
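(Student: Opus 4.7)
The plan is to imitate Mautner's proof for the Springer resolution, using the Hilbert--Chow morphism $\pi \colon \Hilb^n(\C^2) \to S^n(\C^2)$ and its ``parabolic'' (nested Hilbert scheme) variants in place of the Springer and parabolic Springer maps. The goal is to produce a geometric projective generator $P$ of $\Perv_\cS(S^n(\C^2),\kk)$ whose endomorphism algebra can be explicitly identified with $HS_\kk(n,n)$. As a first step I would analyze the strata: $\cS$ has one stratum $S_\lambda$ for each partition $\lambda \vdash n$, and each $S_\lambda$ is a free quotient of an open configuration space in $(\C^2)^{\ell(\lambda)}$ by a finite symmetric group acting on equal-multiplicity points. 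Since the diagonal in $(\C^2)^k$ has real codimension $4$, these configuration spaces, hence the strata, are simply connected, so every $\cS$-constructible local system is trivial and the simple objects of $\Perv_\cS(S^n(\C^2),\kk)$ are exactly the intersection cohomology sheaves $\IC(\overline{S_\lambda}, \uk)$ for $\lambda \vdash n$.

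Next, to construct $P$ I would exploit semismallness. The fiber of $\pi$ over a point of $S_\lambda$ is $\prod_i \Hilb^{\lambda_i}_0(\C^2)$, of dimension $n - \ell(\lambda)$, while the codimension of $S_\lambda$ is $2(n-\ell(\lambda))$; thus the semismall condition holds with equality on every stratum. Combined with the smoothness and affine paving of $\Hilb^n(\C^2)$, this shows that $\pi_*\uk[2n]$ is a perverse parity complex. For each composition $\mu$ of $n$ the analogous construction using the nested (``flag'') Hilbert scheme $\Hilb^\mu(\C^2)$ produces a parity complex $P_\mu$, and I take $P := \bigoplus_\mu P_\mu$. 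This parallels the classical Schur-algebra setup in which $(\kk^n)^{\otimes n} = \bigoplus_\mu M^\mu$ is a direct sum of Young permutation modules indexed by compositions $\mu$. Projectivity of $P$ follows from parity-sheaf machinery applied to a semismall map, and generation follows because in characteristic zero the decomposition theorem already shows that $\pi_*\uk[2n]$ contains each $\IC(\overline{S_\lambda}, \uk)$ as a summand, while in any characteristic each simple is visible as a composition factor of some $P_\mu$.

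Given such a $P$, proper base change yields
\[
\End_{D^b_\cS}(P) \;\cong\; \bigoplus_{\mu,\nu} H^*\!\bigl(\Hilb^\mu(\C^2) \times_{S^n(\C^2)} \Hilb^\nu(\C^2),\, \uk\bigr)
\]
with convolution multiplication. Identifying this convolution algebra with $HS_\kk(n,n)$ reduces to a combinatorial matching: one shows that the irreducible components of the fibered products are naturally indexed by $\fS_n$-orbits on $[n]^n \times [n]^n$ of prescribed type $(\mu,\nu)$, obtained by tracking how generic configurations degenerate to the various strata. These orbits also parametrize a natural basis of $K[\End_{\Sh(\fS_n)}(\Sh([n]^n))]$, and matching convolution with composition of functors gives an integral isomorphism that yields the theorem after tensoring with $\kk$.

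The main obstacle is the projectivity-and-generation step: identifying the IC-summands and parity structure of pushforwards from (nested) Hilbert schemes is already substantive over $\C$, and constitutes the ``analog of modular Springer theory for $\Hilb^n(\C^2)$'' announced in the abstract; in positive characteristic the decomposition theorem fails, so one must verify parity directly via cell decompositions of fibers of $\pi$ (e.g.\ via the Ellingsrud--Strømme cells of $\Hilb^n(\C^2)$) and then invoke modular parity-sheaf machinery. Matching the resulting convolution algebra with the categorically defined Hilbert--Schur algebra is the second delicate point, since it requires translating the equivariant sheaf data on finite sets appearing in the definition of $HS_\kk(n,n)$ into the topology of Hilbert-scheme fibered products in a way compatible with multiplication.
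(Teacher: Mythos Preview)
Your proposal contains a concrete mathematical error and also diverges significantly from the paper's actual strategy.

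\textbf{The error.} You claim that because the ordered configuration space of $\ell(\lambda)$ distinct points in $\C^2$ is simply connected, the stratum $S_\lambda$ is simply connected as well. But $S_\lambda$ is the \emph{quotient} of that configuration space by the group permuting equal parts of $\lambda$, and quotienting a simply connected space by a free action of a finite group $G$ produces a space with $\pi_1 \cong G$. Thus $\pi_1(S_\lambda) \cong N_{\fS_n}(\fS_\lambda)/\fS_\lambda$, and the simple objects of $\Perv_\cS(S^n(\C^2),\kk)$ are parametrized by pairs $(\lambda,V)$ with $V$ an irreducible $\kk[N_{\fS_n}(\fS_\lambda)/\fS_\lambda]$-module, exactly as the paper states. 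In particular the number of simples depends on the characteristic of $\kk$, which your description does not allow.

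\textbf{Gaps in the strategy.} The paper's projective generator is built from \emph{products} $\Hilb^{\lambda_1}(\C^2)\times\cdots\times\Hilb^{\lambda_r}(\C^2)$ pushed forward along the symmetrization map $\sigma_\lambda$, not from nested (flag) Hilbert schemes; the latter are generally singular in dimension two, so your semismallness and parity arguments would not go through as written. More importantly, the paper does not use parity-sheaf machinery to obtain projectivity. Instead it shows that each $\cF_\lambda$ \emph{represents an exact hyperbolic restriction functor} $\Phi_\lambda$ (Theorem~\ref{thm-representFn}, Theorem~\ref{thm-representF_lambda}); projectivity is then immediate, and faithfulness of $\bigoplus_\lambda \Phi_\lambda$ gives generation. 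Your route via ``parity complex $\Rightarrow$ projective'' is not a valid implication in general and would need substantial additional argument even if the resolutions were smooth. Finally, the endomorphism computation in the paper is not a direct matching of irreducible components of fibered products with $\fS_n$-orbits; rather it proceeds by establishing a biadjunction between $(\sigma_\lambda)_*$ and the factorization pullback $(\omega_\lambda)^*$, using a Cartesian diagram encoding the double-coset decomposition of $\fS_n$, and then checking the relations of the presentation of $R_{[n]}$ in Theorem~\ref{thm: presenting Rn}. Your ``combinatorial matching'' misses the representation-theoretic content: the summand of $\Hom(\cF_\lambda,\cF_\mu)$ attached to a double coset $[w]$ has dimension $\operatorname{rk} K(\fS_{\lambda\wedge w\mu}\mmod)$, not $1$, so counting components does not suffice.
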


Before describing the proof, we comment briefly on some basic properties of the Hilbert-Schur algebra and its relation to the Schur algebra.
\begin{enumerate}
\item The Schur algebra $S_\kk(n,d)$ is a quotient of the Hilbert-Schur algebra $HS_\kk(n,d)$ --- the quotient homomorphism is induced by sending a $\Sh(\fS_d)$-module endofunctor of $\Sh([n]^d)$ to the corresponding $\kk[\fS_d]$-module endomorphism of $K(\Sh([n]^d)) \otimes \kk \cong (\kk^n)^{\otimes d}$.  

\item Unlike $S_\kk(n,d)$, the algebra $HS_\kk(n,n)$ is not quasihereditary, but it is self-injective.

\item Recall that the simple $S_\kk(n,n)$-modules are parametrized by partitions of $n$.  As a corollary of the theorem, the simple $HS_\kk(n,n)$-modules are parametrized by pairs $(\lambda,V)$, where $\lambda$ is a partition of $n$ and $V$ is an irreducible $\kk[N_{\fS_n}(\fS_\lambda)/\fS_\lambda]$-module.  In particular, the number of simple objects depends on $\kk$.
\end{enumerate}

We remark that Theorem~\ref{thm-equiv} fits into a broader program of the authors to study modular representation theory arising from perverse sheaves on conical symplectic singularities.  Previously, motivated by~\cite{Mautner,AM} and the work of Braden--Licata--Proudfoot--Webster on hypertoric category $\cO$ and \textit{symplectic duality} \cite{BLPWtorico,BLPWgco}, we carried out this program in the case of affine hypertoric varieties.  In~\cite{BMHyperRingel,BMMatroid}, we proved that an appropriate category of perverse sheaves on an affine hypertoric variety $\mathfrak M$ is equivalent to modules over a quasihereditary algebra $R(\mathfrak M)$ depending only on the matroid underlying the variety $\mathfrak M$.  Moreover, the algebra $R(\mathfrak M)$ is Ringel dual to the algebra $R(\mathfrak M^\vee)$ associated to the symplectic dual hypertoric variety or dual matroid.

\subsection{Structure of proof} To prove our theorem we construct a projective generator of the category of perverse sheaves $\Perv_\cS(S^n (\C^2),\kk)$ and then compute the endomorphism algebra of this projective generator.

\subsubsection{Projective generators}
Our generator is defined as a direct sum of projective objects, one for each composition $\lambda = (\lambda_1, \ldots, \lambda_r)$ of $n$.

Recall that the Hilbert--Chow morphism  $$\rho_n:\Hilb^n(\C^2) \to S^n(\C^2)$$ from the Hilbert scheme $\Hilb^n(\C^2)$ of $n$ points in the plane is a symplectic resolution of singularities of $S^n(\C^2)$.

\begin{definition}
The \textit{Hilbert-Chow sheaf} $\cF_n$ is the self-dual perverse push-forward sheaf 
$$\cF_n := \rho_{n*} \underline{\kk}_{\Hilb^n(\C^2)}[2n] \in \Perv_\cS(S^n (\C^2),\kk).$$
\end{definition}

The following is a special case of Theorem~\ref{thm-representF_lambda}:

\begin{theorem}
The Hilbert-Chow sheaf $\cF_n$ represents an exact hyperbolic restriction functor.  As a consequence, $\cF_n$ is a projective-injective object in $\Perv_\cS(S^n (\C^2),\kk)$.
\end{theorem}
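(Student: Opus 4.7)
The plan is to realize $R\Hom(\cF_n, -)$ as a hyperbolic restriction functor on $S^n(\C^2)$ for a well-chosen $\C^*$-action, and then invoke Braden's hyperbolic restriction theorem to obtain exactness. Consider the $\C^*$-action on $\C^2$ by $t \cdot (x, y) = (tx, t^{-1}y)$. This action is symplectic, commutes with the $\fS_n$-action on $(\C^2)^n$, and therefore descends to $S^n(\C^2)$ and lifts $\C^*$-equivariantly through $\rho_n$ to $\Hilb^n(\C^2)$. The unique fixed point of this action on $S^n(\C^2)$ is the origin, while the attracting and repelling loci are the images in $S^n(\C^2)$ of the two coordinate axes.

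Let $\Phi$ denote hyperbolic restriction to the origin for this action. To identify $\Phi(\cG)$ with $R\Hom(\cF_n, \cG)$ for every $\cG$ in the relevant derived category, I would exploit the compatibility of hyperbolic restriction with proper pushforward: since $\rho_n$ is proper and $\C^*$-equivariant, Braden's theorem yields $\Phi \circ \rho_{n*} \cong (\rho_n^{\C^*})_* \circ \Phi^{\Hilb}$, where $\Phi^{\Hilb}$ is the analogous functor on the smooth $\Hilb^n(\C^2)$. Evaluated on $\ul{\kk}_{\Hilb^n}[2n]$, this computes $\Phi(\cF_n)$ in terms of the attracting-cell decomposition of the Hilbert scheme. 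The desired representability $\Phi(-) \cong R\Hom(\cF_n, -)$ then follows from combining the $(\rho_n^*, \rho_{n*})$ adjunction with Verdier self-duality of $\ul{\kk}_{\Hilb^n}[2n]$ on the smooth $2n$-dimensional variety $\Hilb^n(\C^2)$. Exactness of $\Phi$ on the perverse heart is then the content of Braden's hyperbolic restriction theorem.

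For the projective-injective consequence, exactness of $R\Hom(\cF_n, -)$ on $\Perv_\cS(S^n(\C^2), \kk)$ directly yields projectivity of $\cF_n$. Since $\rho_n$ is proper and $\ul{\kk}_{\Hilb^n}[2n]$ is Verdier self-dual, the pushforward $\cF_n$ is itself self-dual; applying Verdier duality then converts exactness of $R\Hom(\cF_n, -)$ into exactness of $R\Hom(-, \cF_n)$, whence $\cF_n$ is also injective. The main obstacle I anticipate is verifying the hypotheses of Braden's theorem for the stratification $\cS$: one must check that each stratum (indexed by partitions of $n$) meets the attracting and repelling loci cleanly, and that the resulting hyperbolic restriction preserves $\cS$-constructibility and sends perverse sheaves to complexes concentrated in degree zero. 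This reduces to a combinatorial analysis of how the partition stratification of $S^n(\C^2)$ interacts with the two coordinate axes, together with a careful bookkeeping of the shifts dictated by the dimensions of the Bialynicki--Birula cells on the Hilbert scheme.
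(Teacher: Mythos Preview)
Your outline has a genuine gap in the representability step. The compatibility $\Phi \circ \rho_{n*} \cong (\rho_n^{\C^*})_* \circ \Phi^{\Hilb}$ lets you compute the single vector space $\Phi(\cF_n)$, but it says nothing about $\Phi(\cG)$ for an arbitrary $\cG \in \Perv_\cS(S^nX,\kk)$. The sentence ``representability then follows from combining the $(\rho_n^*,\rho_{n*})$ adjunction with Verdier self-duality'' does not close this gap: adjunction gives
\[
R\Hom(\cF_n,\cG) \;\cong\; R\Gamma\bigl(\Hilb^nX,\ \rho_n^!\cG[-2n]\bigr),
\]
and you have offered no mechanism identifying the right-hand side with hyperbolic restriction on $S^nX$. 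Bialynicki--Birula data on $\Hilb^nX$ tell you about the constant sheaf there, not about $\rho_n^!\cG$ for a general $\cG$ downstairs.

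The paper's proof supplies exactly the missing bridge, and it is not the one you sketch. Rather than pushing hyperbolic restriction through $\rho_n$, the paper exploits a second group action: the additive group $\Pol_{n-1}$ of polynomials of degree $<n$ acting on $X$ by vertical shears $f\cdot(x,y)=(x,y+f(x))$. This action is trivial on $S^n\Sigma$ and yields an explicit affine open $\cU\subset\Hilb^nX$ with $\cU\cong \Pol_{n-1}\times S^n\Sigma$. Equivariance of $\cS$-constructible sheaves under this contractible group, combined with the contraction lemma on $S^n\Sigma$, produces a chain of adjunctions identifying $\Phi_n(\cG)$ with $\Hom(\rho_{n*}\phi_!\,\underline{\kk}_{\cU}[2n],\cG)$. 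A separate argument, using semismallness of $\rho_n$ and Brian\c{c}on's irreducibility of the Hilbert--Chow fibers, then shows that replacing $\phi_!\,\underline{\kk}_{\cU}$ by $\underline{\kk}_{\Hilb^nX}$ does not change $\Hom(-,\cG)$ for perverse $\cG$.

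A smaller point: Braden's theorem alone does not give $t$-exactness. It gives $f^!g^* \simeq \bar f^*\bar g^!$, and exactness then follows from the Mirkovi\'c--Vilonen dimension estimate, namely that $\dim_\C(S^n_\lambda X \cap S^n\Sigma) = \tfrac12\dim_\C S^n_\lambda X$ for every stratum. You flag this as an ``obstacle'' but do not carry it out; the paper proves it directly in a separate proposition.
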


\begin{remark}
This result is analogous to and motivated by the fact~\cite[Proposition 7.10]{AHR} that the the Springer sheaf $$\mathcal S:= \pi_* \uk_{\widetilde{\cN}}[\dim \widetilde{\cN}] \in \Perv_{\GL_n}(\cN_{\GL_n},\kk)$$ is also a projective (and injective) object represented by an exact hyperbolic restriction functor (parabolic restriction).
\end{remark}

On the other hand, there is another projective-injective object of $\Perv_\cS(S^n (\C^2),\kk)$ that is even easier to construct.  Let $\sigma_n: (\C^2)^n \to S^n(\C^2)$ be the quotient or symmetrization map.  It is a finite-to-one morphism that is the universal cover over the generic locus.  Then $\cF_{(1^n)}:= \sigma_{n*} \underline{\kk}_{(\C^2)^n}[2n]$ is projective.\footnote{While we prove a more general statement below, here is quick way to see that $\cF_{(1^n)}$ is a projective perverse sheaf.  Note that $\cF_{(1^n)}$ is self-dual and its restriction to the singular locus of $S^n (\C^2)$ is contained in $D^{\leq -2n} \subset {}^p D^{<-1}$.  Thus $\cF_{(1^n)}$ is isomorphic to the perverse !- (and *-) extension of its restriction to the smooth locus, which is the local system whose monodromy is the regular representation of the fundamental group.  By the adjunction $({}^p j_!,p^*)$ and exactness of $j^*$, it follows that $\cF_{(1^n)}$ is also projective (and by duality, injective).}

More generally, there are perverse sheaves that interpolate between these two examples.  For any composition $\lambda = (\lambda_1, \ldots, \lambda_r)$ of $n$, let 
$$\sigma_\lambda: S^{\lambda_1}(\C^2) \times \ldots \times S^{\lambda_r}(\C^2) \to S^n(\C^2)$$
be the symmetrization (or sum) map and let $\cF_\lambda := \sigma_{\lambda*}(\cF_{\lambda_1} \boxtimes \ldots \boxtimes \cF_{\lambda_r} )$.

In Theorem~\ref{thm-representF_lambda} and Proposition~\ref{prop-projgen} we prove:

\begin{theorem}
For any composition $\lambda = (\lambda_1, \ldots, \lambda_r)$ the perverse sheaf $\cF_\lambda$ is projective and injective and the set of $\cF_\lambda$, where $\lambda$ runs over all partitions is a generating set of projective objects, so that $\cF = \bigoplus_\lambda \cF_\lambda$ is a projective generator.
\end{theorem}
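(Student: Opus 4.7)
My plan divides into showing (a) each $\cF_\lambda$ is projective-injective and (b) the collection generates.

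For (a), I would first reduce to proving only projectivity by establishing self-duality: each factor $\cF_{\lambda_i} = \rho_{\lambda_i *}\underline{\kk}_{\Hilb^{\lambda_i}(\C^2)}[2\lambda_i]$ is self-dual, external tensor products of self-dual perverse sheaves are self-dual, and the finiteness of $\sigma_\lambda$ means $\sigma_{\lambda *} = \sigma_{\lambda !}$ commutes with Verdier duality, so $\cF_\lambda$ is self-dual. By the finite-pushforward adjunction,
\[\Hom(\cF_\lambda, G) \cong \Hom\bigl(\boxtimes_i \cF_{\lambda_i},\; \sigma_\lambda^! G\bigr),\]
so it suffices to show this functor of $G$ is exact. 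My plan is to identify it with an iterated hyperbolic restriction built from Theorem~\ref{thm-representFn}: each $\cF_{\lambda_i}$ represents an exact hyperbolic restriction functor $\Phi_{\lambda_i}$ for some $\C^*$-action on $S^{\lambda_i}(\C^2)$. Assembling these via a $(\C^*)^r$-action on $S^n(\C^2)$ whose attracting/repelling data separates the $n$ points into $r$ clusters of sizes $\lambda_1, \ldots, \lambda_r$, and then applying Braden's hyperbolic localization theorem together with base change along $\sigma_\lambda$, one should identify $\Hom(\cF_\lambda, -)$ as the exact composite of these hyperbolic restrictions, giving projectivity.

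For (b), it suffices to realize each simple object $L \in \Perv_\cS(S^n(\C^2), \kk)$ as a quotient of some $\cF_\lambda$. By the classification, simples are parametrized by pairs $(\mu, V)$ with $\mu \vdash n$ a partition and $V$ an irreducible $\kk[N_{\fS_n}(\fS_\mu)/\fS_\mu]$-module; each $L(\mu, V)$ is the IC-extension of a local system on the open stratum $S_\mu$ of type $\mu$. The restriction of $\cF_\mu$ to $S_\mu$ is the pushforward of a constant sheaf along the restriction of $\sigma_\mu$ to this stratum, a finite covering whose Galois closure has Galois group $N_{\fS_n}(\fS_\mu)/\fS_\mu$; consequently this local system carries the regular representation of that group in each stalk. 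Every irreducible $V$ appears as a summand, so $L(\mu, V)$ is a quotient of $\cF_\mu$, and $\bigoplus_\mu \cF_\mu$ is a projective generator.

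The principal obstacle is the hyperbolic localization step in (a): constructing the right $(\C^*)^r$-action on $S^n(\C^2)$ whose attracting/repelling loci match the clustering dictated by $\lambda$, and verifying the base change along $\sigma_\lambda$ that expresses $\sigma_\lambda^! G$ compatibly with the hyperbolic restriction of $G$ on the ambient $S^n(\C^2)$. A naive approach through the adjunction runs into the fact that $\sigma_\lambda^!$ is only left $t$-exact (since $\sigma_\lambda$ is ramified along the big diagonal), so exactness cannot come from this alone; it must be forced by the specific projective factors $\cF_{\lambda_i}$ via Theorem~\ref{thm-representFn}. Once the geometric setup is in place, the exactness conclusion follows formally by combining Braden's theorem with the exactness of each $\Phi_{\lambda_i}$.
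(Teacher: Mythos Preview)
Your self-duality argument is fine, and your instinct that exactness must come from hyperbolic localization is correct. But the step you flag as ``the principal obstacle'' is not an obstacle so much as an impossibility: there is no $(\C^*)^r$-action on $S^n(\C^2)$ whose fixed locus or attracting sets separate points into clusters of prescribed sizes $\lambda_1,\dots,\lambda_r$. All torus actions on $S^n(\C^2)$ come from the $(\C^*)^2$ inside $\GL_2$ acting on $\C^2$, and every such action has the single fixed point $n\cdot 0$. So the geometric setup you are hoping for simply does not exist on the symmetric product. Relatedly, the adjunction $\Hom(\cF_\lambda,G)\cong\Hom(\cB_\lambda,\sigma_\lambda^!G)$ lands you in $D^b$ rather than in perverse sheaves, since $\sigma_\lambda^!$ is only left $t$-exact (as you note); you cannot then invoke the representability of $\Phi_{\lambda_i}$ by $\cF_{\lambda_i}$, which is a statement about perverse sheaves.

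The paper resolves this by a different device: it constructs \emph{factorization maps} $\omega_\lambda\colon S^\lambda X\to S^n X$, which are open embeddings obtained by composing the symmetrization $\sigma_\lambda$ with a ``bubble-separating'' self-map of $S^\lambda X$. The key technical result (Theorem~\ref{thm-biadjoint}) is that on $\cS$-constructible perverse sheaves the $t$-exact functor $(\omega_\lambda)^*$ is \emph{biadjoint} to $(\sigma_\lambda)_*$. This replaces your problematic $\sigma_\lambda^!$ by an open restriction, after which $\Hom(\cF_\lambda,G)\cong\Hom(\cB_\lambda,(\omega_\lambda)^*G)$ is exactly $(\Phi_{\lambda_1}\boxtimes\cdots\boxtimes\Phi_{\lambda_r})((\omega_\lambda)^*G)$, manifestly exact. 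The clustering you wanted from a torus is achieved topologically by $\omega_\lambda$ rather than by a group action.

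For (b), your strategy of exhibiting each simple as a quotient of $\cF_\mu$ is reasonable and different from the paper's. The paper instead shows directly that the functor $\Phi=\bigoplus_\lambda\Phi_\lambda$ is faithful, via an easy support argument (if $S^n_\lambda X$ is open in the support of $\cF$, then $\Phi_\lambda(\cF)$ computes a stalk of the local system there). Your description of $\cF_\mu|_{S^n_\mu X}$ as coming from a finite cover with Galois group $N_{\fS_n}(\fS_\mu)/\fS_\mu$ is oversimplified: the fiber of $\sigma_\mu\rho_\mu$ over a point of $S^n_\mu X$ is a union of products of punctual Hilbert schemes indexed by distribution matrices, not a finite set. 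One can still extract the needed surjection, but it takes more work than you indicate; the paper's faithfulness argument sidesteps this computation entirely.
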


\subsubsection{Endomorphisms}  The previous theorem implies that $\Perv_\cS(S^n (\C^2),\kk)$
is equivalent to the category of finite-dimensional modules over $\End(\cF)^{\mathrm{op}}$.  To complete our proof of the main result it remains to give an explicit description of the spaces of homomorphisms $\Hom(\cF_\lambda,\cF_\mu)$ and their composition maps.

Our proof builds off of the special case of the Hilbert-Chow sheaf:

 \begin{theorem}\label{thm-spr}
The endomorphism algebra of the Hilbert-Chow sheaf $\cF_n := \rho_{n*} \underline{\kk}_{\Hilb^n(\C^2)}[2n]$ is isomorphic to the tensor product of $\kk$ with the character ring $R_\C(\fS_n) = K(\mathrm{Rep} (\C[\fS_n] ))$ of $\fS_n$.  In other words,  $$\End(\cF_n) \cong R_\C(\fS_n)  \otimes_\Z \kk.$$
In particular, there is a natural action of the character ring $R_\C(\fS_n)$ on the cohomology of the Hilbert-Chow fibers.
\end{theorem}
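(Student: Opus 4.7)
By Theorem~\ref{thm-representFn}, $\cF_n$ corepresents an exact hyperbolic restriction functor $\Phi:\Perv_\cS(S^n(\C^2),\kk)\to\Vec_\kk$, so
\[
\End(\cF_n)\;\cong\;\Phi(\cF_n)
\]
as $\kk$-algebras (the product on the right coming from natural transformations of $\Phi$). This reduces the computation to evaluating $\Phi$ on $\cF_n$. The functor $\Phi$ is hyperbolic restriction at the origin of $S^n(\C^2)$ for the scaling $\C^*$-action, and it commutes with the proper pushforward $\rho_{n*}$; applied to $\cF_n=\rho_{n*}\uk[2n]$, this reduces the problem to a hyperbolic restriction computation on $\Hilb^n(\C^2)$. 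The scaling $\C^*$-action on $\Hilb^n(\C^2)$ has exactly $p(n)$ isolated fixed points --- the monomial ideals indexed by partitions $\lambda\vdash n$ --- and admits a Bia\l{}ynicki--Birula paving into affine cells indexed by these partitions (Ellingsrud--Str\o{}mme). Consequently $\Phi(\cF_n)$ has $\kk$-dimension $p(n)$, matching $\mathrm{rank}_\Z R_\C(\fS_n)$.

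Over $\C$, the decomposition theorem applied to the semi-small morphism $\rho_n$ gives
\[
\cF_n\otimes\C\;\cong\;\bigoplus_{\lambda\vdash n}\IC\bigl(\overline{S^\lambda(\C^2)}\bigr)
\]
with each simple perverse summand of multiplicity one, so $\End(\cF_n\otimes\C)\cong\C^{p(n)}\cong R_\C(\fS_n)\otimes\C$ via the identification of the projector onto $\IC(\overline{S^\lambda})$ with the central idempotent associated to the irreducible $\fS_n$-representation $V_\lambda$.

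To upgrade this to an integral ring isomorphism for arbitrary $\kk$, the goal is to construct a natural ring homomorphism $R_\C(\fS_n)\otimes_\Z\kk\to\End(\cF_n)$ and show it is bijective using the dimension count. The map is built from the $\fS_n$-equivariance of $\sigma_{n*}\uk[2n]=\cF_{(1^n)}$ and its geometric relation to $\cF_n$ through the isospectral Hilbert scheme $X_n=\Hilb^n(\C^2)\times_{S^n(\C^2)}(\C^2)^n$ (equivalently, Haiman's Procesi bundle): central elements of $\Z[\fS_n]$ acting on $\sigma_{n*}\uk[2n]$ descend to integral endomorphisms of $\cF_n$, and this is the map that becomes the one identified over $\C$. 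The naturality of $\Phi$ then furnishes the action of $R_\C(\fS_n)$ on the cohomology of each Hilbert--Chow fiber, proving the final assertion. The principal obstacle is this last step: while the dimension count and the characteristic-zero case are immediate, producing a canonical coefficient-independent integral map $R_\C(\fS_n)\to\End(\cF_n)$ is delicate in positive characteristic, where the decomposition-theorem description of $\cF_n$ no longer applies and one must track the integral structure through the Procesi bundle / BKR-type correspondence carefully.
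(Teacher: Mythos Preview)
Your dimension count is correct and agrees with the paper's (Proposition~\ref{prop:dimensions agree}): $\dim_\kk \End(\cF_n)=p(n)$. The characteristic-zero identification via the decomposition theorem is also fine. The gap is exactly where you locate it: you do not actually construct the ring homomorphism $R_\C(\fS_n)\to\End(\cF_n)$ over $\Z$, and your sketch for doing so is problematic. Letting the center of $\Z[\fS_n]$ act on $\sigma_{n*}\uk[2n]$ and ``descending'' would, at best, produce a map from $Z(\Z[\fS_n])$, which is the \emph{wrong} integral form: the class-sum lattice and the character lattice $R_\C(\fS_n)$ sit differently inside $\C^{p(n)}$, and the theorem asserts the latter. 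The Procesi/BKR route you mention is plausible in spirit (the paper itself flags it as a possible alternative approach in a remark), but it is not carried out here, and making it give the correct integral structure is nontrivial.

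The paper's proof avoids all of this and is quite different in mechanism. It does not use the isospectral Hilbert scheme, BKR, or central elements of the group algebra. Instead it builds explicit endomorphisms geometrically: for each $\lambda\vdash n$ the biadjunction of Theorem~\ref{thm-biadjoint} between $(\omega_\lambda)^*$ and $(\sigma_\lambda)_*$ furnishes unit/counit maps $\cF_n\to\cF_\lambda\to\cF_n$, whose composite ${}_{(n)}1_\lambda\cdot{}_\lambda 1_{(n)}$ is defined over any $\kk$ and corresponds to the class $[\Ind_{\fS_\lambda}^{\fS_n}\mathbf 1]\in R_\C(\fS_n)$. Since these induced characters form a $\Z$-basis of $R_\C(\fS_n)$, this immediately gives the desired map. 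Linear independence of the images is then checked by computing their action on the hyperbolic stalk $\Phi_n(\cF_n)\cong H^{2n}_Z(\Hilb^n X)$ in the basis of irreducible components of the Lagrangian $Z=\rho_n^{-1}(S^n\Sigma)$; the argument is triangular with respect to refinement of partitions (Proposition~\ref{prop:action on basis}), with the diagonal entries shown nonzero via a transversality computation between the attracting and repelling Lagrangians (Lemma~\ref{lem:dual basis}). Brian\c{c}on's irreducibility of the punctual Hilbert scheme is used here. This approach also makes the ring homomorphism property transparent, since the relations among the ${}_\mu 1_\nu$ are verified directly (Section~\ref{sec:hom}).
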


We note that this is an analogue of the following central result of Springer theory, which first appeared in work of Borho-MacPherson in the case $\kk=\C$ and was extended by Juteau to general $\kk$:

\begin{theorem*}[\cite{BoMcP81,BoMcP,Springer,Juteau}] If $G$ is a reductive algebraic group over $\C$ and $\pi\colon \wt\cN \to \cN$ is the Springer resolution of the nilpotent cone of $\operatorname{Lie}(G)$, then the endomorphism algebra of the Springer sheaf $\mathcal S:= \pi_* \uk_{\widetilde{\cN}}[\dim \widetilde{\cN}]$ is isomorphic to the group algebra of the Weyl group $W$ of $G$.  In other words,
$$\End(\mathcal S) \cong \Z[W] \otimes_\Z \kk.$$
\end{theorem*}

\begin{remark}
The origin of Springer's correspondence was work of J. A. Green, from which it follows that for $GL_n$, the numbers of irreducible components of the Springer fibers are equal to the dimensions of the irreducible representations of the symmetric group.

In contrast, by a theorem of Brian\c{c}on~\cite{Bri77}, the fibers of the Hilbert-Chow morphism are all irreducible, and so one might not expect an interesting algebra to arise from the Hilbert-Chow map.  On one level this is correct. The character ring $R_\C(\fS_n)$ is commutative and so when the characteristic of $\kk$ is zero, the algebra $R_\C(\fS_n)  \otimes_\Z \kk$ is isomorphic to the unremarkable algebra $\kk^{p(n)}$, where $p(n)$ is the number of partitions of $n$.
In fact, a proof of Theorem~\ref{thm-spr} in the case when $\kk$ has characteristic zero is considerably easier and has appeared in the literature already --- see ~\cite[Lemma 7.7]{GS}, for example.

Nonetheless, $R_\C(\fS_n)$ has interesting integral structure and when $\kk$ is a field of small positive characteristic $R_\C(\fS_n) \otimes_\Z \kk$ has nontrivial representation theory, which has been studied by Bonnaf\'e~\cite{Bon06, Bon08}.
\end{remark}

For a composition $\lambda = (\lambda_1, \ldots, \lambda_r)$, let $\fS_\lambda \subset \fS_n$ be the parabolic subgroup $\fS_\lambda = \fS_{\lambda_1}\times \ldots \times \fS_{\lambda_r}$.  We prove (see Theorem~\ref{thm:main isom}):

\begin{theorem}\label{thm-main}
\begin{enumerate}
\item For any compositions $\lambda = (\lambda_1, \ldots, \lambda_r), \mu = (\mu_1, \ldots, \mu_s)$ there is a natural isomorphism
\[ \Hom(\cF_\lambda,\cF_\mu) = K(\Sh [\fS_\lambda \backslash \fS_n/\fS_\mu])\otimes_\Z \kk.\]
Here $K(\Sh [\fS_\lambda \backslash \fS_n/\fS_\mu])$ denotes the Grothendieck group of $\Sh [\fS_\lambda \backslash \fS_n/\fS_\mu]$, the category of $\fS_\lambda$-equivariant sheaves of complex vector spaces on the set of left cosets $\fS_n/\fS_\mu$, or equivalently the category of sheaves on the quotient stack $ [\fS_\lambda \backslash \fS_n/\fS_\mu]$.
\item Under the isomorphisms above, the composition map
\[ \Hom(\cF_\mu,\cF_\nu) \times \Hom(\cF_\lambda,\cF_\mu) \to \Hom(\cF_\lambda,\cF_\nu) \]
corresponds to the map on Grothendieck groups induced by the natural convolution operation
\[ \Sh[\fS_\mu \backslash \fS_n /\fS_\nu] \times \Sh[\fS_\lambda \backslash \fS_n/\fS_\mu] \to \Sh[\fS_\lambda \backslash \fS_n /\fS_\nu]. \]
\end{enumerate}
\end{theorem}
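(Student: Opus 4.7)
The plan is to reduce Theorem~\ref{thm-main}(1) to the case $\lambda=\mu=(n)$ of Theorem~\ref{thm-spr} via proper base change along the Cartesian square of symmetrization maps, and then to verify (2) by the analogous three-fold argument. Writing $X_\lambda := (\C^2)^n/\fS_\lambda = \prod_i S^{\lambda_i}(\C^2)$ and $\cF'_\lambda := \cF_{\lambda_1}\boxtimes\cdots\boxtimes\cF_{\lambda_r}$ so that $\cF_\lambda = \sigma_{\lambda*}\cF'_\lambda$, a standard orbit calculation identifies
\[
Z_{\lambda,\mu} := X_\lambda \times_{S^n(\C^2)} X_\mu = \coprod_{w \in \fS_\lambda\bs\fS_n/\fS_\mu} X_{\nu(w)},
\]
where $\fS_{\nu(w)} := \fS_\lambda \cap w\fS_\mu w^{-1}$ is a Young subgroup and $\nu(w) = (a_{ij}(w))$ is the ``intersection matrix'' composition whose parts are the sizes of the overlaps of the $\lambda$-blocks with the $w$-translated $\mu$-blocks; the restrictions $q_w:X_{\nu(w)}\to X_\lambda$ and $p_w:X_{\nu(w)}\to X_\mu$ of the two projections are themselves products of further symmetrization maps refining the blocks of $\lambda$ and $\mu$ respectively.

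Since $\sigma_\lambda,\sigma_\mu$ are finite hence proper, the $(\sigma_{\lambda!},\sigma_\lambda^!)$-adjunction together with proper base change yields
\[
\Hom(\cF_\lambda,\cF_\mu) = \Hom(\cF'_\lambda,\,q_*p^!\cF'_\mu) = \bigoplus_{w}\Hom(\cF'_\lambda,\,q_{w*}p_w^!\cF'_\mu).
\]
For each double coset $w$, the Künneth decompositions of $\cF'_\lambda$ and $\cF'_\mu$ together with the factored structure of $q_w$ and $p_w$ reduce the $w$-summand to a tensor product of contributions of shape $\End(\cF_{a_{ij}(w)})$, one for each $(i,j)$-block of the matrix $\nu(w)$. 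Applying Theorem~\ref{thm-spr} to each such block gives
\[
\Hom(\cF'_\lambda,\,q_{w*}p_w^!\cF'_\mu) \cong \bigotimes_{i,j} R_\C(\fS_{a_{ij}(w)})\otimes_\Z\kk = R_\C(\fS_{\nu(w)})\otimes_\Z\kk,
\]
and summing over $w$ while recognizing the decomposition $\Sh[\fS_\lambda\bs\fS_n/\fS_\mu] \simeq \prod_w\mathrm{Rep}(\fS_{\nu(w)})$ establishes the isomorphism in (1). For (2), the analogous three-fold Cartesian diagram over $\sigma_\lambda,\sigma_\mu,\sigma_\nu$ exhibits the composition map as a pull-push through the triple fiber product $X_\lambda\times_{S^n(\C^2)}X_\mu\times_{S^n(\C^2)}X_\nu$; this triple product is stratified by pairs of double cosets, and the induced map on Grothendieck groups matches the convolution on the groupoid of equivariant sheaves on $\fS_n$, since the natural projection of triple-product strata into $Z_{\lambda,\nu}$ precisely implements multiplication of double cosets.

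The hardest step will be the reduction of each $w$-summand to a tensor product of local endomorphism algebras $\End(\cF_{a_{ij}(w)})$: although $q_w$ and $p_w$ both factor cleanly into products of symmetrization maps on the matrix blocks $\nu(w)$, the pullback $p_w^!\cF_{\mu_j}$ of a Hilbert-Chow sheaf along a symmetrization refinement does not literally split as a box product of smaller Hilbert-Chow sheaves (since $\Hilb^n(\C^2)$ is not a product of smaller Hilbert schemes). To obtain the desired Künneth decomposition, I would likely pass to the Hilbert-scheme level and reinterpret $\Hom(\cF_\lambda,\cF_\mu)$ as the top Borel-Moore homology $H^{\mathrm{BM}}_{4n}(\Hilb^\lambda\times_{S^n(\C^2)}\Hilb^\mu;\kk)$, then use semi-smallness of the Hilbert-Chow maps together with the stratification of $S^n(\C^2)$ to reduce block-by-block to Theorem~\ref{thm-spr}.
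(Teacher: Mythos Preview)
Your reduction strategy is close in spirit to the paper's, but the first step contains a genuine error. The fiber product $Z_{\lambda,\mu} = X_\lambda \times_{S^n(\C^2)} X_\mu$ is \emph{not} a disjoint union $\coprod_w X_{\nu(w)}$. Its irreducible components are indeed indexed by double cosets and each is birational to some $X_{\nu(w)}$, but they meet along the loci where points from different blocks collide. (Already for $n=2$, $\lambda=\mu=(1,1)$, the fiber product $X^2\times_{S^2X}X^2$ consists of two copies of $X^2$ glued along the diagonal.) Consequently your base-change identity $\sigma_\lambda^!\sigma_{\mu*}\cF'_\mu \cong \bigoplus_w q_{w*}p_w^!\cF'_\mu$ is not available as stated, and the ``standard orbit calculation'' you invoke only describes the generic fiber.

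The paper circumvents exactly this problem. Rather than using the adjunction $(\sigma_{\lambda!},\sigma_\lambda^!)$, it proves a non-obvious biadjunction (Theorem~\ref{thm-biadjoint}) between $(\sigma_\lambda)_*$ and the pullback $(\omega_\lambda)^*$ along an \emph{open} factorization map $\omega_\lambda\colon S^\lambda X\hookrightarrow S^nX$ whose image lies in the locus where the different blocks are disjoint. With $\omega_\lambda$ replacing $\sigma_\lambda$ on one side of the square, the resulting fiber product \emph{is} a genuine disjoint union over double cosets (the Cartesian diagram~\eqref{eqn:Cartesian diagram}), and proper base change then yields the decomposition you want. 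From there the reduction to $\End(\cB_{\nu(w)})\cong\bigotimes_{i,j}\End(\cF_{a_{ij}(w)})$ and hence to Theorem~\ref{thm-spr} goes through as you outline; this is Proposition~\ref{prop:dimensions agree}.

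Your fallback suggestion of passing to $H^{\mathrm{BM}}_{4n}$ of $\Hilb^\lambda\times_{S^nX}\Hilb^\mu$ could in principle rescue the argument, since top Borel--Moore homology does split over top-dimensional irreducible components regardless of how they intersect; but this would require you to identify those components and their contributions carefully, and it is not what you wrote in the body of the proposal.

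For part~(2), the paper does not attempt a direct three-fold base-change computation. Instead it constructs a ring homomorphism $\Theta\colon R_{[n]}\to \End(\bigoplus_\lambda\cF_\lambda)$ by sending explicit generators ${}_\mu\bone_\lambda$, $z_*$ to concrete maps built from the units and counits of the $(\omega^*,\sigma_*)$ biadjunction, checks the Mackey-type relations of Theorem~\ref{thm: presenting Rn} (this is where the Cartesian diagram reenters, as Lemma~\ref{lem:adjunction compatibility}), and then shows $\Theta$ is an isomorphism by combining the dimension count with a surjectivity argument (Section~\ref{sec:iso}). Your pull-push sketch for composition is plausible but would again run into the component-intersection issue at the level of the triple fiber product.
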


\begin{example}\label{ex:extreme cases}
\begin{enumerate}
\item Consider the special case $\lambda = \mu = (n)$.  Then 
\[\Hom(\cF_n,\cF_n) = K(\Sh[\fS_n \backslash \fS_n /\fS_n])\otimes \kk = K(\Sh[~\cdot~/\fS_n])\otimes \kk = R_\C(\fS_n) \otimes \kk\]
and convolution of sheaves corresponds to multiplication in the character ring.  Thus Theorem~\ref{thm-main} recovers Theorem~\ref{thm-spr} as a special case.
\item At the other extreme, let $\lambda = \mu = (1^n)$.  Then
\[\Hom(\cF_{(1^n)},\cF_{(1^n)}) = K(\Sh[\fS_{(1^n)} \backslash \fS_n /\fS_{(1^n)}])\otimes \kk = K(\Sh[\fS_n])\otimes \kk = \kk[\fS_n]\]
and convolution of sheaves corresponds to multiplication in the group algebra.
\end{enumerate}
\end{example}

To summarize the results above:

\begin{theorem}\label{cor:main}
The category of perverse sheaves $\Perv_\cS(S^n (\C^2),\kk)$ is equivalent to the category of $R$-modules, where $R$ is the algebra with orthogonal idempotents $e_\lambda$ for each composition $\lambda$ of $n$ such that 
$$e_\mu R e_\lambda = K(\Sh [\fS_\lambda \backslash \fS_n/\fS_\mu])\otimes_\Z \kk$$
and the multiplication map $e_\nu R e_\mu \times e_\mu R e_\lambda \to e_\nu R e_\lambda$ is the map induced by the convolution of sheaves.
\end{theorem}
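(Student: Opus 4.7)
The plan is that this statement is a direct formal consequence of the two main inputs already assembled earlier in the paper: the fact that $\cF = \bigoplus_\lambda \cF_\lambda$ (sum over compositions $\lambda$ of $n$) is a projective generator of $\Perv_\cS(S^n(\C^2),\kk)$, together with Theorem~\ref{thm-main}, which computes the $\Hom$-spaces between the $\cF_\lambda$ and identifies their composition law with sheaf convolution.

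First, I would invoke standard Morita theory: since $\cF$ is a projective generator of an abelian category with finite-dimensional $\Hom$-spaces, the functor $\Hom(\cF,-)$ defines an equivalence between $\Perv_\cS(S^n(\C^2),\kk)$ and the category of finite-dimensional modules over $R := \End(\cF)^{\mathrm{op}}$ (up to an $\mathrm{op}$ convention chosen so that the algebra multiplication matches the order used in the statement). The decomposition $\cF = \bigoplus_\lambda \cF_\lambda$ then supplies a complete system of mutually orthogonal idempotents $e_\lambda \in R$, namely the projections onto the summands $\cF_\lambda$, and by the usual matrix-style description of endomorphism rings of a direct sum we have a natural identification
$$e_\mu R e_\lambda \;=\; \Hom_{\Perv_\cS(S^n(\C^2),\kk)}(\cF_\lambda,\cF_\mu).$$

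Applying Theorem~\ref{thm-main}(1) identifies the right-hand side with $K(\Sh[\fS_\lambda\backslash\fS_n/\fS_\mu]) \otimes_\Z \kk$, which is precisely the asserted description of $e_\mu R e_\lambda$. The multiplication maps $e_\nu R e_\mu \times e_\mu R e_\lambda \to e_\nu R e_\lambda$ in $R$ are, by construction, composition of homomorphisms among the $\cF_\lambda$, and Theorem~\ref{thm-main}(2) identifies these with the convolution maps on Grothendieck groups of the double coset stacks. This completes the identification of $R$ with the algebra described in the statement. There is no substantive obstacle at this stage, since all the geometry and homological computation have been carried out in the earlier theorems; the only task is the essentially bookkeeping verification that one's conventions for left vs.\ right modules, and for the order of multiplication in $R$ versus composition in $\End(\cF)$, are consistent with the convolution order given in Theorem~\ref{thm-main}(2) --- a point that may require inserting an $\mathrm{op}$ in one or two places but produces no mathematical difficulty.
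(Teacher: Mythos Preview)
Your proposal is correct and follows essentially the same approach as the paper: the paper explicitly frames Theorem~\ref{cor:main} as a summary of the preceding results, noting that the projective generator $\cF$ yields an equivalence with $\End(\cF)^{\mathrm{op}}$-modules, and then Theorem~\ref{thm-main} identifies the Hom-spaces and composition with the $K$-theory of the double coset stacks and their convolution. Your remark about the $\mathrm{op}$ convention is apt and matches the paper's own caveat.
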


Finally, the proof of Theorem~\ref{thm-equiv} is complete with the observation that $HS_\kk(n,n)$ is Morita equivalent to the algebra $R$ described in Theorem~\ref{cor:main}.

\subsection{Outline of the paper}

We begin in Section~\ref{sec:2-category} by considering the 2-category underlying the algebra $R$ of Theorem~\ref{cor:main} and give a simple presentation of $R$ in terms of generators and relations.  We also explain why $R$ is Morita equivalent to $HS_\kk(n,n)$.
%The presentation is used later in the paper to prove our main theorem by constructing corresponding endomorphisms of the projective generator $\cF = \oplus_\lambda \cF_\lambda$ and checking that they satisfy the relations in $R$ and that the induced map is indeed an isomorphism.  

In Section~\ref{sec:perv} we introduce the category $\Perv_\cS(S^n (\C^2),\kk)$ and define for each composition $\lambda$ of $n$ two important functors, a restriction and a symmetrization functor, between perverse sheaves on $S^n(\C^2)$ and 
perverse sheaves on $S^{\lambda_1}(\C^2) \times \ldots \times S^{\lambda_r}(\C^2)$ constructible with respect to the product stratification.  The key property is that these functors are biadjoint on constructible perverse sheaves.  The functors and the biadjunction are later used to define the desired endomorphisms of $\cF$ and to check the relations.

In Section~\ref{sec:HCsheaves} we construct the perverse sheaves $\cF_\lambda$ and prove that they represent hyperbolic stalk functors.  It follows that each $\cF_\lambda$ is projective (and as they are self-dual, also injective) and together they generate the category $\Perv_\cS(S^n (\C^2),\kk)$.

In Section~\ref{sec:hom} we construct a homomorphism $R \to \End(\cF)$ using our presentation of $R$ from Section~\ref{sec:2-category}.  In checking that the relations hold, a key role is played by a Cartesian diagram relating the restriction and symmetrization functors with respect to different compositions of $n$.

Finally it remains to show that the homomorphism constructed in Section~\ref{sec:hom} is an isomorphism.  This is proved in Section~\ref{sec:iso}, first for the special case of Theorem~\ref{thm-spr} and then in general.

We conclude with Section~\ref{sec:functors} in which we prove Theorem~\ref{thm-interp}  -- an interpretation of the restriction and symmetrization functors in terms of the algebra $R$.

\begin{remark}
For the sake of simplicity, in this introduction we have defined the objects $\cF_\lambda$ and the idempotents of $R$ to be parametrized by compositions $\lambda$ of $n$.  In the body of the text we find it convenient to work more generally with arbitrary set partitions.
\end{remark}

\begin{remark}
The endomorphism algebra $\End(\cF_n)$ (and more generally $\End(\cF)$) can be reinterpreted as a convolution algebra on Borel--Moore homology in the sense of Ginzburg \cite[Theorem 8.6.7]{CG}.  Such algebras can be studied by finding simple Lagrangian correspondences which generate them.  In the setting of Hilbert schemes, related Lagrangian correspondences were studied extensively by Nakajima \cite{Nak97,NakBook} and Grojnowski \cite{Groj}.  Our approach is perhaps closest to the operators studied by Grojnowski.  One could alternatively deduce our theorem using Nakajima's work instead, however the operators he studies do not generate the algebra over the integers and so more care is needed to keep track of denominators.

Another possible approach to our main results, which we hope to explore in future work, would be to use the work of Bezrukavnikov--Kaledin \cite{BezKal} or Bridgeland--King--Reid and Haiman \cite{BKR,Haim}, which gives an isomorphism between the $K$-theory of $\Hilb^n(\C^2)$ and the $\fS_n$-equivariant K-theory of $(\C^2)^n$.  Using this isomorphism, the top Borel-Moore homology of the Ginzburg convolution variety inside $\Hilb^n(\C^2) \times\Hilb^n(\C^2)$ should be identified with a piece of the $(\fS_n\times \fS_n)$-equivariant K-theory of the union of graphs of elements of $\fS_n$ acting on $(\C^2)^n$.  We expect that this approach will provide a more conceptual explanation for the appearance of equivariant sheaves on $\fS_n$ in our result. 
\end{remark}

%%%%%%%%%%%%%%%%%%%
\subsubsection*{Acknowledgements}
%%%%%%%%%%%%%%%%%%%

The authors thank David Ben-Zvi, Ben Elias, Geordie Williamson, Anthony Licata, C\'edric Bonnaf\'e, Hiraku Nakajima and Roman Bezrukavnikov for stimulating remarks and conversations.
The first author thanks the Institute for Advanced Study for excellent virtual working conditions during the semester when some of these results were worked out.  The second author was supported in part by NSF grant DMS-1802299.

%%%%%%%%%%%%%%%%%%%
\section{A $2$-category and its decategorification}\label{sec:2-category}
%%%%%%%%%%%%%%%%%%%
In this section, we define a $\C$-linear (strict) $2$-category $\cC_{[n]}$, and consider the associated ``decategorified" algebra $R_{[n]} = K(\cC_{[n]})$.  We will give a simple presentation of this algebra in terms of generators and relations.

%%%%%
\subsection{Set partitions and integer partitions}\label{sec:partitions}
%%%%%

We begin by establishing some notation and terminology associated with set partitions and integer partitions.  We let $\Part{[n]}$ denote the set of partitions of the set $[n] := \{1,2,\dots,n\}$.  By definition a partition $\lambda \in \Part{[n]}$ is a set of non-empty subsets of $[n]$ such that each element of $[n]$ is in exactly one of these subsets, or equivalently it can be described by giving an equivalence relation $\stackrel{\lambda}{\sim}$ on $[n]$ whose equivalence classes are the subsets of $\lambda$.  

We order $\Part{[n]}$ by refinement:
we put $\lambda \le \mu$ if every part of $\lambda$ is contained in a part of $\mu$, or equivalently if $i \stackrel{\lambda}{\sim} j$ implies $i \stackrel{\mu}{\sim} j$.  
The unique maximum element of $\Part{[n]}$ is the partition with a single part, and the unique minimum is the partition with $n$ parts.

For a partition $\lambda \in \Part{[n]}$, we have a subgroup $\fS_\lambda$ of the symmetric group $\fS_n$ consisting of elements which permute elements of each part of the partition among themselves: 
\[\fS_\lambda = \prod_{I \in \lambda} \fS_I,\]
considered as a subset of $\fS_n$ in the obvious way.
This subgroup can be defined equivalently in terms of the equivalence relation $\stackrel\lambda\sim$ given by
\[\fS_\lambda = \{w \in \fS_n \mid w(i) \stackrel{\lambda}{\sim} i \,\mbox{ for all }\, i \in [n]\}.\]

The poset $\Part{[n]}$ is a lattice, and the meet $\lambda \wedge \mu$ of partitions $\lambda, \mu \in \Part{[n]}$ is the partition whose parts are all nonempty intersections of parts of $\lambda$ with parts of $\mu$.  We clearly have
\[\fS_{\lambda \wedge \mu} = \fS_\lambda \cap \fS_\mu.\]

The action of $\fS_n$ on $[n]$ induces an action on $\Part{[n]}$, and 
for any $w \in \fS_n$ and $\lambda \in \Part{[n]}$ we have
\[\fS_{w\lambda} = w\fS_\lambda w^{-1}.\]
Thus up to conjugacy the subgroups $\fS_\lambda$ are determined by elements of the  quotient set $\Part{[n]}/\fS_n$.  
An element of this quotient depends only on the sizes of the equivalence classes and not on their particular elements; in other words, it is an integer partition of $n$.  We denote this quotient set by $\Part{n}$, and we denote the natural quotient map $\Part{[n]} \to \Part{n}$ by $\lambda \mapsto \hat\lambda$.  For instance, if $n = 6$ and $\lambda = \{\{1,2,4\},\{3\},\{5,6\}\}$, then $\hat\lambda = (3,2,1)$.  

At times it will be useful to fix a section of this quotient, or in other words to consider $\Part{n}$ as a subset of $\Part{[n]}$.  We will do this in the obvious way where the elements of $[n]$ are grouped into consecutive intervals, with the largest groups coming first.  None of our results would be affected if a different embedding were used.

The refinement order $\le$ on $\Part{[n]}$ induces a partial order $\preceq$ on $\Part{n}$: we say that $\hat \lambda \preceq \hat \mu$ if $\lambda \le z\mu$ for some $z \in \fS_n$.

%%%%%
\subsection{Defining $\cC_{[n]}$: objects and $1$-morphisms}\label{subsec-Cn}
%%%%%

We now define our $2$-category $\cC_{[n]}$.
Objects of $\cC_{[n]}$ are set partitions $\lambda \in \Part{[n]}$. 
For $\mu$, $\nu \in \Part{[n]}$, the $1$-morphisms 
$\mu \to \nu$ form a category given by 
\[\hom(\mu, \nu) = \Sh([\fS_\nu \bs \fS_n / \fS_\mu]),\]
the category of finite-dimensional $\C$-sheaves on the finite quotient stack $[\fS_\nu \bs \fS_n / \fS_\mu]$.  

In elementary terms, an object $F$ of $\hom(\mu, \nu)$ is given by a finite-dimensional vector space $F_x$ for all $x\in \fS_n$, 
together with a left action of $\fS_\nu$ and a right action of $\fS_\mu$ on the total space 
$\bigoplus_{x \in \fS_n} V_x$, commuting with each other,  such that 
\[g \cdot F_x \subset F_{gx} \; \mbox{and} \; F_x \cdot h \subset F_{xh}\]
for all $x \in \fS_n$, $g \in \fS_\nu$, $h \in \fS_\mu$.

For an element $w$ in $\fS_n$, we use the notation ${}_\nu[w]_\mu$ for the stacky double coset $[\fS_\nu \backslash (\fS_\nu w \fS_\mu)/\fS_\mu]$ containing a permutation $w\in \fS_n$.  It is a single point with automorphism group $\fS_\nu \cap w\fS_\mu w^{-1} = \fS_{\nu \wedge w\mu}$.  
The double quotient $[\fS_\nu \bs \fS_n / \fS_\mu]$ is the disjoint union of ${}_{\nu}[w]_\mu$ as $w$ runs over a set of representatives of double cosets.  
%To fix a particular choice, we let 
%$\ZZ\nu{n}\mu$ denote the set of representatives of double cosets of minimal length.  More generally, if $\mu, \nu \le \lambda$, we let $\ZZ{\nu}{\lambda}{\mu}$ be the set of minimal length representatives of double cosets in $\fS_\nu \backslash\fS_\lambda/\fS_\mu$.  One can verify that nothing that follows would be affected by making a different choice of representatives.

If we fix partitions $\mu, \nu \in \Part{[n]}$, we can omit the subscripts and write $[w] = {}_\nu [w]_\mu$. Then we let $\hom_{[w]}(\mu,\nu)$ denote the full subcategory of $1$-morphisms supported on $[w]$. 
If $F$ is supported on the double coset
$[e]$ of the identity, then conjugation makes $F_e$ into a representation of $\fS_{ \nu\wedge \mu}$.  In fact, this defines an equivalence of $1$-categories 
\begin{equation}\label{eqn:representation equivalence}
\hom_{[e]}(\mu,\nu) \stackrel{\sim}{\longrightarrow} \C\fS_{ \nu\wedge \mu}\mmod.
\end{equation}
More generally, a $1$-morphism $F \in \hom_{[w]}(\mu,\nu)$ supported on $[w]$ is equivalent to a
representation of $\fS_{\nu \wedge w\mu}$, via the action on $F_w$ given by 
\[z \cdot v = z v w^{-1}z^{-1}w.\]

\subsection{Composition in $\cC_{[n]}$} For $\lambda, \mu, \nu \in \Part{[n]}$, composition of $1$-morphisms is given by a functor
\[\hom(\mu, \nu) \times \hom(\lambda, \mu) \to \hom(\lambda, \nu),\;\; (A, B) \mapsto A \circ B\]
defined as follows.  If $A \in \hom(\mu,\nu)$ and $B \in \hom(\lambda,\mu)$, then 
$A \boxtimes B$ is a sheaf on $\fS_n \times \fS_n$, equivariant for an action of 
$\fS_\nu \times \fS_\mu \times \fS_\mu \times \fS_\lambda$. Restricting to the action of the 
subgroup 
\[\{(u, v, v^{-1}, w) \mid u \in \fS_\nu, \, v \in \fS_\mu, \, w \in \fS_\lambda\} \cong \fS_\nu \times \fS_\mu \times\fS_\lambda\] and pushing forward by multiplication
\[\fS_n \times \fS_n \to \fS_n\]
gives an $\fS_\lambda \times \fS_\mu \times \fS_\nu$-equivariant sheaf on $\fS_n$, where $\fS_\mu$ acts trivially on $\fS_n$.  The composition $A \circ B$ is defined to be the $\fS_\mu$-invariants inside this sheaf.

Less formally, the composition is given by the formula
\[(A \circ B)_x = \Big[\bigoplus_{yz=x} A_y \otimes_\C B_z\Big]^{\fS_\mu},\]
where the action of $\fS_\mu$ is by $w \ast(a \otimes b) = (a\cdot w^{-1}) \otimes (w \cdot b)$.  

From this it is easy to check the strict associativity of the composition.  We can also describe the identity elements: the identity ${}_\mu{\mathbf I}_\mu$ on $\mu$ is the sheaf given by
\[\left(_\mu{\mathbf I}_\mu\right)_x = 
\begin{cases}
\C & x \in \fS_\mu \\
0 & x \notin \fS_\mu
\end{cases}\] 
with the left and right actions of $\fS_\mu$ given by identity maps.

%%%%%
\subsection{Generating $1$-morphisms}
%%%%%

We consider three special classes of $1$-morphisms in $\cC_{[n]}$.

\begin{enumerate}
	\item Generalizing the identity morphisms in the previous section, for partitions $\mu \le \nu$ we let 
	${_\nu\bI_\mu}\in \hom(\mu,\nu)$ be the sheaf which is $\C$ on every $w \in \fS_\nu$ and $0$ otherwise, and with the left and right actions given by the identity maps.  In other words, it is the sheaf which is sent to the trivial representation of $\fS_{\mu \wedge \nu} = \fS_\mu$ under the equivalence \eqref{eqn:representation equivalence}.  We define $_\mu\bI_\nu \in \hom(\nu,\mu)$ similarly.
	
	\item For any $w \in \fS_n$ and any $\lambda \in \Part{[n]}$, let ${}_{w\lambda}(w_*)_{\lambda}$ be the sheaf supported on the double coset ${}_{w\lambda}[w]_\lambda$ which is sent to the trivial representation under the equivalence 
	\[\hom_{[w]}(\lambda,w\lambda) \stackrel{\sim}{\longrightarrow} \C\fS_{w\lambda\wedge w\lambda}\mmod = \C\fS_{w\lambda}\mmod.\]
	In other words, it is $\C$ on every element of the double coset ${}_{w\lambda} [w]_\lambda$, zero on all other elements, and the left and right actions are by identity maps.  We will sometimes omit the subscripts and write simply $w_*$ in formulas where the source and/or target objects are clear because of another morphism it is composed with.
%	We put 
%	\[w_* = \sum_{\lambda\in \Part{[n]}} {}_{w\lambda}(w_*)_{\lambda}.\]
	
	\item For any $\lambda \in \Part{[n]}$ and any representation $V$ of $\fS_\lambda$, let $\langle V\rangle \in \hom(\lambda,\lambda)$ be given by 
	$\langle V\rangle_w = V$ if $w \in \fS_\lambda$ and is zero otherwise, with the left action given by the $\fS_\lambda$-module structure and the right action given by identity maps.	Thus this sheaf 
	is sent to $V$ under the equivalence \eqref{eqn:representation equivalence}.  Note that for the trivial representation ${\mathds{1}}_{\fS_\lambda}$ we have $\langle{\mathds{1}}_{\fS_\lambda}\rangle \simeq {_\lambda\bI_\lambda}$.  
\end{enumerate}

\begin{proposition}\label{prop:categorical relations}
	These $1$-morphisms satisfy the following relations:
	\begin{enumerate}[label=(\alph*)]
		\item If $\lambda \le \mu \le \nu$ or $\nu \le \mu \le \lambda$, then
		\[_\nu\bI_\mu \circ {}_\mu\bI_\lambda \simeq {}_\nu\bI_\lambda.\]
		
		\item For any $w, z\in \fS_n$ and any $\lambda \in \Part{[n]}$, we have
		\[{}_{wz\lambda}(w_*)_{z\lambda} \circ {}_{z\lambda}(z_*)_\lambda \simeq {}^{}_{wz\lambda}\left((wz)_*\right)_{\lambda}.\]
		If $w\in \fS_\lambda$, then ${}_\lambda(w_*)_\lambda \simeq {}_\lambda\bI_\lambda \in \hom(\lambda,\lambda)$.
		\item For any $w \in \fS_n$ and any $\mu, \nu \in \Part{[n]}$ with $\mu \le \nu$ or $\nu\le \mu$, we have
		\[w_* \circ {}_\nu\bI_{\mu} \simeq  {}_{w\nu}\bI_{w\mu} \circ w_*.\]
%		If $w\in \fS_\nu$ then $w_* \circ {}_\nu\bI_{\mu} \simeq {}_\nu\bI_{\mu}$, and if $w\in \fS_\mu$ then ${}_{\nu}\bI_{\mu} \circ w_* \simeq {}_{\nu}\bI_{\mu}$.
		
		\item If $\lambda, \mu \le \nu$, then
		\begin{align*}
		{}_\mu\bI_\nu \circ {}_\nu\bI_\lambda & \simeq \bigoplus_z {}_\mu\bI_{\mu\wedge z\lambda} \circ z_* \circ  {}_{z^{-1}\mu\wedge\lambda }\bI_\lambda \\
		& \simeq \bigoplus_z {}_\mu\bI_{\mu\wedge z\lambda} \circ {}_{\mu\wedge z\lambda}\bI_{z\lambda} \circ z_*,
		\end{align*}
		where the sum is over a set of representatives $z$ of double cosets in $\fS_\mu \backslash \fS_\nu / \fS_\lambda$.
		
		\item If $\lambda \le \mu$ then for any $V \in \C\fS_\lambda\mmod$ and $W \in \C\fS_\mu\mmod$, we have
		
		\[{}_\lambda\bI_\mu \circ \langle W\rangle \simeq \langle W|_{\fS_\lambda}\rangle \circ {}_\lambda\bI_\mu,\] \[\langle W\rangle \circ  {}_\mu\bI_\lambda \simeq {}_\mu\bI_\lambda \circ \langle W|_{\fS_\lambda}\rangle, \;\mbox{and}\]
		\[{}_\mu\bI_\lambda \circ \langle V \rangle \circ {}_\lambda\bI_\mu \simeq \langle \Ind_\lambda^\mu V\rangle,\]
		where $\Ind_\lambda^\mu$ denotes induction of representations from $\fS_\lambda$ to $\fS_\mu$.
		
		\item For any $V \in \C\fS_\lambda\mmod$ and any $w \in \fS_n$, there is an isomorphism
		\[w_* \circ \langle V\rangle \simeq \langle {}^wV \rangle \circ w_*,\]
		where ${}^wV$ is the representation of $\C\fS_{w\lambda}$ obtained by pulling $V$ back by conjugation  
		$\fS_{w\lambda} = w\fS_\lambda w^{-1} \to \fS_\lambda$.
		
		\item If $V, W \in \C\fS_\lambda\mmod$, then 
		\[\langle V\rangle \circ \langle W\rangle \simeq \langle V \otimes W\rangle.\]
	\end{enumerate}
\end{proposition}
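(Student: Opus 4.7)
The plan is to verify each of the seven relations by directly applying the composition formula
\[(A \circ B)_x = \Big[\bigoplus_{yz = x} A_y \otimes_\C B_z\Big]^{\fS_\mu}\]
from Section~\ref{subsec-Cn}, together with the explicit descriptions of ${}_\nu\bI_\mu$, $w_*$, and $\langle V\rangle$ as sheaves on $\fS_n$.  In every case the check reduces to identifying, stalk by stalk, the underlying vector spaces and the residual left and right group actions.

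For the structural relations (a), (b), and (c) the argument is essentially combinatorial.  In (a) with $\lambda \le \mu \le \nu$, the composition ${}_\nu\bI_\mu \circ {}_\mu\bI_\lambda$ is supported on products $yz$ with $y \in \fS_\nu$ and $z \in \fS_\mu \subseteq \fS_\nu$, which is exactly $\fS_\nu$; on each $x \in \fS_\nu$ the direct sum has $|\fS_\mu|$ summands permuted regularly by $\fS_\mu$, so the invariants are one-dimensional and one recovers ${}_\nu\bI_\lambda$; the reversed inequalities are analogous.  Relation (b) follows because the stalks of $w_* \circ z_*$ land exactly on the coset $wz\fS_\lambda$ for each set partition $\lambda$, with identity maps composing to identity maps; the second clause is immediate since $w \in \fS_\lambda$ forces the support to collapse onto $\fS_\lambda$.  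Relation (c) is the observation that translating the constant sheaf supported on $\fS_\mu$ by $w$ on the left gives the constant sheaf supported on $w\fS_\mu = \fS_{w\mu}\,w$, with trivial action.

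Relation (d) is a Mackey-type decomposition and demands the most careful bookkeeping.  Viewed as a sheaf on $\fS_n$, the composition ${}_\mu\bI_\nu \circ {}_\nu\bI_\lambda$ is the constant sheaf with value $\C$ on $\fS_\nu$ (zero elsewhere), equipped with left $\fS_\mu$ and right $\fS_\lambda$ actions by translation.  Partitioning $\fS_\nu$ into the double cosets $\fS_\mu z \fS_\lambda$ for $z \in \ZZ{\mu}{\nu}{\lambda}$, each piece is a sheaf supported on ${}_\mu[z]_\lambda$ with stalk $\C$ on which the stabilizer $\fS_\mu \cap z\fS_\lambda z^{-1} = \fS_{\mu \wedge z\lambda}$ acts trivially.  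I then recognize each such piece as the triple composition ${}_\mu\bI_{\mu \wedge z\lambda} \circ z_* \circ {}_{z^{-1}\mu \wedge \lambda}\bI_\lambda$ by computing this composition directly: the two inclusion sheaves spread a one-dimensional stalk across $\fS_\mu$ and $\fS_\lambda$ respectively, while $z_*$ shifts the support onto the double coset ${}_{\mu \wedge z\lambda}[z]_{z^{-1}\mu \wedge \lambda}$, with all actions by identity maps, matching the piece extracted above.  The second displayed form then follows by applying relation (c) to move $z_*$ past an inclusion sheaf.

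Finally, relations (e), (f), and (g) concern how $\langle V\rangle$ interacts with inclusions and permutations, and each of the three building blocks involved is supported on a single double coset, so the compositions can be read off under the equivalence $\hom_{[e]}(\mu,\mu) \simeq \C\fS_\mu\mmod$ of \eqref{eqn:representation equivalence}.  Relation (g) is immediate: the composition formula at $x = e$ returns $V \otimes W$ with the diagonal $\fS_\mu$-action.  Relation (f) amounts to the observation that the left action induced on the stalk of $w_* \circ \langle V\rangle$ at $w$ is the $w$-conjugate $\fS_{w\lambda}$-action on $V$.  For (e), the first two isomorphisms express that collapsing or inflating along an inclusion only remembers the restricted $\fS_\lambda$-action on $W$; the third, induction identity, is the substantial piece, where the composition formula at $x \in \fS_\mu$ produces one copy of $V$ for each representative of $\fS_\mu/\fS_\lambda$ sending $x$ into $\fS_\lambda$, and these assemble into the standard model of $\Ind_\lambda^\mu V$.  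The main obstacle will be keeping the bookkeeping consistent through (d) and the induction identity in (e), where the underlying vector space of the composition has to be identified with either a unique double-coset sheaf or an induced representation via a canonical, action-respecting isomorphism.
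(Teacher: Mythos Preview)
Your proposal is correct and follows the same approach as the paper: direct verification of each relation via the explicit composition formula and the stalk descriptions of the generating $1$-morphisms. The paper's own proof is far more terse---it simply observes that for (a)--(d) both sides are rank-one constant sheaves with identity actions, so only the supports need to be matched (e.g.\ (d) is the double-coset decomposition of $\fS_\nu$), and that (e)--(g) are ``easily verified''---whereas you have spelled out the invariant computations and the induction identification in detail, but the underlying strategy is identical.
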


\begin{proof}
	For the first four relations (a)-(d), it is easy to see that each sheaf is a rank one constant sheaf supported on a subset of $\fS_n$, with left and right actions given by identity maps.  So verifying these isomorphisms only requires checking that the supports are the same on both sides, which is straightforward.  For example, relation (d) expresses the set-theoretic decomposition of $\fS_\nu$ into $\fS_\mu$-$\fS_\lambda$ double cosets. The remaining relations (e), (f), and (g) are also easily verified.
\end{proof}

\begin{remark}\label{rmk:C[n] and Cn}
	The relation (b) says that the $1$-morphisms $w_*$ are all invertible, so any two set partitions with the same underlying integer partition are isomorphic. Thus we can get an equivalent $2$-category $\cC_n$ by restricting the objects to one representative for each integer partition.	For instance, one could take the image of the inclusion $\Part{n} \subset \Part{[n]}$ described at the end of Section \ref{sec:partitions}.
\end{remark}

The next result gives a presentation for the $1$-morphisms in $\cC_{[n]}$.

\begin{proposition}\label{prop:generating the category}
Any object $F$ of the morphism category $\hom(\mu,\nu)$ has a unique expression of the form
\begin{equation}\label{eqn:double coset sheaf}
	F\simeq \bigoplus_{w} {_\nu}\bI_{\nu\wedge w\mu} \circ \langle V_w \rangle \circ {}_{\nu\wedge w\mu}\bI_{w\mu} \circ {}_{w\mu}(w_*)_\mu,
\end{equation}
where the sum runs over a set of representatives for the double cosets in $\fS_\nu \backslash \fS_n / \fS_\mu$.  In particular, the $1$-morphisms ${}_\nu\bI_\mu$, $z_*$, $\langle V\rangle$ generate all $1$-morphisms in $\cC_{[n]}$, up to isomorphism.
	
With respect to the normal form \eqref{eqn:double coset sheaf}, the relations of Proposition \ref{prop:categorical relations} completely determine the composition law on $1$-morphisms.
\end{proposition}
\begin{proof}
An object $F$ of $\hom(\mu,\nu)$ is a sheaf on  $[\fS_\nu \backslash \fS_n / \fS_\mu]$, so it splits as a direct sum of its restriction to double cosets.  The restriction to the double coset $[w]$ corresponds to a representation $V_w$ of $\fS_{\nu\wedge w\mu}$, giving the isomorphism \eqref{eqn:double coset sheaf}.

	To show that our relations determine the composition of $1$-morphisms expressed in the form \eqref{eqn:double coset sheaf}, it is enough to show that the composition of a summand \[ F|_{[w]}= {_\nu}\bI_{\nu\wedge w\mu} \circ \langle V \rangle \circ {}_{\nu\wedge w\mu}\bI_{w\mu} \circ {}_{w\mu}(w_*)_\mu\]
     with 
    each of the three types of generators can be transformed using those relations into an expression of the form \eqref{eqn:double coset sheaf}.
    
	If we compose on the left by a $1$-morphism $z_*$, the relations (b), (c), and (f) allow it to pass through to the right and combine with $w_*$, resulting in an expression which is again of the same form, but with $\nu$ replaced by $z\nu$ and $V$ replaced by ${}^zV$.  This also means that we can and will assume that $w$ is the identity in the remaining computations.
	
	If we compose on the left by a $1$-morphism $\langle W\rangle$, where $W$ is a representation of $\fS_\nu$, then using relations (e) and (g) gives
	\[{_\nu}\bI_{\nu\wedge\mu} \circ \langle W|_{\fS_{\nu\wedge \mu}} \otimes V \rangle \circ {}_{\nu\wedge \mu}\bI_{\mu},\]
	which is again of the required form.
	
	Finally, if we compose on the left by ${}_\lambda\bI_\nu$, there are two cases. If
	$\lambda \ge \nu$, then using relations (a) and (e), we get
	\begin{align*}
	{}_\lambda\bI_\nu \circ F|_{[w]} & \simeq {_\lambda}\bI_{\nu\wedge \mu} \circ \langle V \rangle \circ {}_{\nu\wedge \mu}\bI_{\mu}\\
	& \simeq {_\lambda}\bI_{\lambda\wedge \mu} \circ {}_{\lambda\wedge \mu}\bI_{\nu\wedge \mu} \circ \langle V \rangle \circ {}_{\nu\wedge \mu}\bI_{\lambda\wedge \mu} \circ {}_{\lambda\wedge \mu}\bI_{\mu}\\
	& \simeq {_\lambda}\bI_{\lambda\wedge \mu} \circ \langle \Ind^{\lambda\wedge \mu}_{\nu\wedge \mu} V \rangle \circ {}_{\lambda\wedge \mu}\bI_{\mu}.
	\end{align*}
	On the other hand, if $\lambda \le \nu$, then the relation (d) gives
	\begin{align*}
	{}_\lambda\bI_\nu \circ F & \simeq \bigoplus_z {}_\lambda\bI_{\lambda\wedge z(\nu\wedge\mu)} \circ {}_{\lambda\wedge z(\nu\wedge\mu)}\bI_{z(\nu \wedge \mu)} \circ z_* \circ \langle V \rangle \circ {}_{\nu\wedge \mu}\bI_{\mu}\\
	& \simeq  \bigoplus_z {}_\lambda\bI_{\lambda\wedge z(\nu\wedge\mu)} \circ \langle {}^zV|_{\fS_{\lambda \wedge z(\nu\wedge \mu)}}\rangle \circ {}_{\lambda\wedge z(\nu\wedge\mu)}\bI_{z(\nu \wedge \mu)} \circ {}_{z(\nu\wedge\mu)}\bI_{z\mu} \circ z_*\\
	& \simeq \bigoplus_z {}_\lambda\bI_{\lambda\wedge z\mu} \circ \langle \Ind_{\lambda \wedge z(\nu\wedge \mu)}^{\lambda\wedge z\mu}({}^zV|_{\fS_{\lambda \wedge z(\nu\wedge \mu)}})\rangle \circ {}_{\lambda \wedge z\mu}\bI_{z\mu} \circ z_*,
	\end{align*}
	where $z$ runs over a set of representatives of double cosets $\fS_\lambda \backslash \fS_\nu / \fS_{\nu\wedge\mu}$.
\end{proof}

%%%%%
\subsection{The decategorified ring}\label{sec:the ring R}
%%%%%

Taking Grothendieck groups of the morphism categories of our $2$-category $\cC_{[n]}$ gives a ring
\[R = R_{[n]} := K(\cC_{[n]}) = \bigoplus_{\lambda,\mu\in \Part{[n]}} K(\hom(\lambda,\mu))\]
with multiplication induced by the composition in $\cC_{[n]}$.
We denote the summand $K(\hom(\lambda,\mu))$ by ${}_\mu R_{\lambda}$.

 It is then clear by Remark \ref{rmk:C[n] and Cn} that the algebra $R_n := K(\cC_n)$ can be viewed as a subalgebra of $R_{[n]}$, and the inclusion is a Morita equivalence.

The special $1$-morphisms considered in the previous section induce elements of $R_{[n]}$:
\begin{itemize}
	\item ${}_\mu\bone_{\lambda} = [{}_\mu\bI_\lambda] \in {}_\mu R_\lambda$ for partitions $\lambda \le \mu$ or $\mu \le \lambda$, 
	\item $w_* = \sum_{\lambda \in \Part{[n]}} [{}_{w\lambda}(w_*)_{\lambda}]$ for $w \in \fS_n$, and
	\item $[V]$ for $V \in \C\fS_\lambda\mmod$.
\end{itemize}
The elements ${}_\lambda \bone_\lambda$ are mutually orthogonal idempotents whose sum is the identity, but they are not in general primitive.

By Proposition \ref{prop:generating the category} these elements generate $R_{[n]}$, and the relations of Proposition \ref{prop:categorical relations} induce relations which give a presentation of $R_{[n]}$.  However, the presentation on the decategorified level admits a significant simplification, given in the next Theorem.  

\begin{theorem}\label{thm: presenting Rn}
	The ring $R_{[n]}$ is generated over $\Z$ by the elements ${}_\lambda\bone_{\mu}$ and $w_*$.
	For any $\lambda,\nu \in \Part{[n]}$, a $\Z$-basis of ${}_\nu R_\lambda$ is given by the set of expressions
	\begin{equation}\label{eqn:Z-basis}
	{}_\nu\bone_\mu \cdot {}_\mu\bone_{w\lambda} \cdot w_*,
	\end{equation}
	where $w \in \fS_n$ runs over a set of representatives of double cosets
	$\fS_\nu \backslash \fS_n / \fS_\lambda$, and $\mu$ runs over a set of representatives of orbits of 
	$\fS_{\nu\wedge w\lambda}$ acting on the interval $[(1^n),\nu\wedge w \lambda]$.
	Furthermore, the relations on $R_{[n]}$ induced by the relations (a)-(d) of Proposition \ref{prop:categorical relations} give a complete set of relations for $R_{[n]}$.  
	
	In other words, the relations in $R_{[n]}$ are generated by
	\begin{enumerate}[label=(\alph*)]
		\item If $\lambda \le \mu \le \nu$ or $\nu \le \mu \le \lambda$, then
		\[_\nu\bone_\mu \cdot {}_\mu\bone_\lambda = {}_\nu\bone_\lambda.\]
		The idempotents ${}_\lambda\bone_\lambda$ are orthogonal: if $\mu\ne\lambda$, then
		\[{}_\lambda\bone_\lambda \cdot {}_\mu\bone_\mu = 0.\]
		\item For any $w, z\in \fS_n$, we have
		\[w_* \cdot z_* = (wz)_*\]
		and $e_* = \sum_\lambda {}_\lambda\bone_\lambda$.
		\item For any $w \in \fS_n$ and any $\mu, \nu \in \Part{[n]}$ with $\mu \le \nu$ or $\nu\le \mu$, we have
		\[w_* \cdot {}_\nu\bone_{\mu} =  {}_{w\nu}\bone_{w\mu} \cdot w_*.\]
		If $w\in \fS_\nu$ then $w_* \cdot {}_\nu\bone_{\mu} = {}_\nu\bone_{\mu}$, and if $w\in \fS_\mu$ then ${}_{\nu}\bone_{\mu} \cdot w_* = {}_{\nu}\bone_{\mu}$.
		
		\item If $\lambda, \mu \le \nu$, then
		\begin{align*}
		{}_\mu\bone_\nu \cdot {}_\nu\bone_\lambda & = \sum_z {}_\mu\bone_{\mu\wedge z\lambda} \cdot z_* \cdot  {}_{z^{-1}\mu\wedge\lambda }\bone_\lambda \\
		& = \sum_z {}_\mu\bone_{\mu\wedge z\lambda} \cdot {}_{\mu\wedge z\lambda}\bone_{z\lambda} \cdot z_*,
		\end{align*}
		where the sum is over a set of representatives $z$ of double cosets in $\fS_\mu \backslash \fS_\nu / \fS_\lambda$.
	\end{enumerate}
	
\end{theorem}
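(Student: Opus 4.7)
The plan is to prove the three claims---basis, generation, and completeness of relations---in that order; the basis computation is the substantive step, generation then follows immediately, and completeness of the relations reduces to a normal-form argument.

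First I would describe a basis of ${}_\nu R_\lambda$ intrinsically from the definition of $\cC_{[n]}$. The morphism category $\hom(\lambda, \nu) = \Sh([\fS_\nu \bs \fS_n / \fS_\lambda])$ decomposes as a direct sum over stacky double cosets, and the equivalence \eqref{eqn:representation equivalence} generalizes to $\hom_{[w]}(\lambda, \nu) \simeq \C\fS_{\nu \wedge w\lambda}\mmod$ for any $w \in \ZZ\nu n\lambda$. Passing to Grothendieck groups gives
\[{}_\nu R_\lambda \;\cong\; \bigoplus_{w \in \ZZ\nu n\lambda} R_\C(\fS_{\nu \wedge w\lambda}),\]
and by the classical description of the character ring of a Young subgroup (Young's rule applied in each factor), each summand has a $\Z$-basis of permutation characters $[\Ind_{\fS_\mu}^{\fS_{\nu \wedge w\lambda}} \mathbf{1}]$, where $\mu$ runs over representatives of $\fS_{\nu \wedge w\lambda}$-orbits in $[(1^n), \nu \wedge w\lambda]$. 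A direct computation with the composition formula---or, equivalently, an application of relation (e) of Proposition~\ref{prop:categorical relations}---identifies the element $E_{\nu,\mu,w,\lambda} := {}_\nu\bone_\mu \cdot {}_\mu\bone_{w\lambda} \cdot w_*$ with the permutation character $[\Ind_{\fS_\mu}^{\fS_{\nu \wedge w\lambda}} \mathbf{1}]$ inside the summand indexed by $[w]$. This establishes the basis statement, and since each $E_{\nu, \mu, w, \lambda}$ is a monomial in the generators ${}_\sigma\bone_\tau$ and $w_*$, the generation statement follows immediately.

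To prove that (a)--(d) form a complete set of relations, let $R'$ be the abstract ring presented by the generators ${}_\sigma\bone_\tau$, $w_*$ and the relations (a)--(d), so that there is a natural surjection $\pi \colon R' \twoheadrightarrow R_{[n]}$. It suffices to show that $R'$ is spanned over $\Z$ by the expressions $E_{\nu, \mu, w, \lambda}$; then $\pi$ is injective because the images of these expressions are linearly independent in $R_{[n]}$ by the previous paragraph. I would establish the spanning property by a rewriting algorithm applied to an arbitrary monomial in the generators: first use (b) and (c) to move all permutation factors to the right and combine them into a single $w_*$; then use (a) to compress any monotone run in the resulting chain of $\bone$-factors so that the chain is strictly alternating in the refinement order; finally, at any interior peak apply (d) to replace the peak by a valley, absorbing the resulting new permutation into the trailing $w_*$ via (c) and (b). Termination is controlled by the lexicographic measure $(\#\text{interior peaks}, \text{length of the chain})$: each application of (d) strictly decreases the first coordinate, while each application of (a) strictly decreases the second coordinate without creating new peaks. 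When the algorithm halts the chain has no interior peaks and is strictly alternating, so it has length at most two with at most a single valley, producing an expression of the desired form $E_{\nu, \mu, w, \lambda}$.

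The main obstacle is the bookkeeping in the termination argument: when (d) is applied at an interior peak, a sum over double cosets is introduced and each summand carries its own permutation $z_*$, and one must verify that moving these $z_*$'s through the remainder of the chain via (c) does not reintroduce peaks further down. This is ultimately controlled by the fact that conjugation by $z$ is an order-automorphism of $\Part{[n]}$, so the alternating structure of the chain is preserved under (c); a careful peak count then confirms that the inductive measure genuinely drops. Granted this, the algorithm terminates at a $\Z$-linear combination of normal forms $E_{\nu, \mu, w, \lambda}$, which completes the proof.
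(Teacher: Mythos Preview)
Your approach is essentially the same as the paper's: identify the basis via the double-coset decomposition and the permutation-character basis of each $R_\C(\fS_{\nu\wedge w\lambda})$, then define $R'$ abstractly and run a peak-elimination rewriting to show the normal forms span. The paper uses the height of a maximal peak as the termination measure rather than your peak count, but both work. One step you should make explicit: after your algorithm terminates at ${}_\nu\bone_\mu \cdot {}_\mu\bone_{w\lambda}\cdot w_*$, the element $w$ need not be your chosen double-coset representative and $\mu$ need not be your chosen orbit representative; the paper closes this by showing, via (b) and (c), that left-multiplying by $x_*$ for $x\in\fS_\nu$, right-multiplying by $y_*$ for $y\in\fS_\lambda$, and acting by $x\in\fS_{\nu\wedge w\lambda}$ let you adjust $w$ within its double coset and $\mu$ within its $\fS_{\nu\wedge w\lambda}$-orbit without leaving $R'$.
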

\begin{proof}
	To see that the elements $[V]$ are redundant, note that the relation (5) of Proposition \ref{prop:categorical relations} implies that for $\lambda \le \mu$, we have
	\[{}_\mu\bone_\lambda \cdot {}_\lambda\bone_\mu = [\Ind_\lambda^\mu {{\mathds{1}}_{\fS_\lambda}}].\]
	If we fix $\mu$ and let $\lambda$ run over a set of representatives of $\fS_\mu$-orbits on the set of $\lambda \le \mu$, then the resulting elements of $_\mu R_\mu$	
	have the same $\Z$-span as the elements $[V]$, $V \in \C\fS_\mu\mmod$.  This follows from the well-known fact that the induced modules $\Ind_{\fS_\lambda}^{\fS_n} \mathds{1}_{\fS_\lambda}$ have a decomposition into Specht modules which is triangular in the dominance order, with ones on the diagonal (see \cite[Corollary 2.4.7]{Sag-book}, for instance).
	
	Given $\lambda,\mu,\nu \in \Part{[n]}$ and $w \in \fS_n$ with $\mu \le \nu \wedge w\lambda$, the expression 
	\eqref{eqn:Z-basis} is the class of a sheaf on $[\fS_\nu \backslash \fS_n / \fS_\lambda]$ supported on the double coset $[w]$, whose stalk at $w$ is the representation $\Ind_\mu^{\nu \wedge w\lambda} \mathds{1}_{\fS_\mu}$ of $\fS_{\nu\wedge w\lambda}$.  Thus if we fix $\nu$ and $\lambda$ and take one $w$ in each double coset and one $\mu$ in each $\fS_{\nu\wedge w\lambda}$-orbit, the resulting elements give a $\Z$-basis for ${}_\nu R_\lambda$.
	
	Now let $R'$ be the algebra generated by symbols  
	${}_\lambda\bone_{\mu}$, for $\lambda \le \mu$ or $\mu \le \lambda$ and $w_*$ for $w \in \fS_n$, subject to the relations (a)-(d) of this theorem.  We have shown that there is a surjective homomorphism $R' \to R_{[n]}$.  To show it is an isomorphism, it is enough to show that the expressions \eqref{eqn:Z-basis} that give a basis of $R_{[n]}$ also span $R'$.  To see this, take an arbitrary product of our generators in $R'$.  Using relations (b) and (c) we can write it in the form
	\[{}_{\alpha_1}\bone_{\alpha_2} \cdot {}_{\alpha_2}\bone_{\alpha_3} \cdot \;\dots\; \cdot {}_{\alpha_{r-1}}\bone_{\alpha_r} \cdot w_*\]
	where $\alpha_i$ is comparable to $\alpha_{i+1}$ for $1 \le i < r$.  
	
	If $\alpha_i = \alpha_{i+1}$, then ${}_{\alpha_i}\bone_{\alpha_{i+1}}$ is an idempotent which can be removed from the sequence using relation (a).  Using relation (a) again we can assume that for three consecutive elements 
	$\alpha_{i-1}, \alpha_i, \alpha_{i+1}$ we either have
	$\alpha_i > \alpha_{i-1}, \alpha_{i+1}$ (a ``peak") or $\alpha_i < \alpha_{i-1}, \alpha_{i+1}$ (a ``valley").  If there are any peaks, then relation (d) can be applied to a maximal peak, replacing the product by a sum of terms which have valleys in that position, and whose maximal peaks are lower.  
	
	Iterating this, we see that our element of $R'$ is a sum of elements of the form 
	\begin{equation}\label{eqn:R' generator}
	{}_\nu\bone_\mu \cdot {}_\mu\bone_{w\lambda} \cdot w_*
	\end{equation}
	with $\mu \le \nu$ and $\mu \le w\lambda$.  If $x \in \fS_\nu$, we have
	\[{}_\nu\bone_\mu \cdot{}_\mu\bone_{w\lambda} \cdot w_* = x_* \cdot {}_\nu\bone_\mu \cdot{}_\mu\bone_{w\lambda} \cdot w_* = {}_{\nu}\bone_{x\mu} \cdot {}_{x\mu}\bone_{xw\lambda} \cdot (xw)_*\]
	using relations (b) and (c).  Similarly if $y \in \fS_\lambda$, we get
	\[{}_\nu\bone_\mu \cdot{}_\mu\bone_{w\lambda} \cdot w_* = {}_\nu\bone_\mu \cdot{}_\mu\bone_{w\lambda} \cdot (wyw^{-1})_*w_* = {}_\nu\bone_\mu \cdot{}_\mu\bone_{w\lambda} \cdot (wy)_*,\]
	since $wyw^{-1} \in \fS_{w\lambda}$.  Thus $R'$ is spanned by elements \eqref{eqn:R' generator} where $w$ lies in a set of double coset representatives for $\fS_\nu \backslash \fS_n / \fS_\lambda$.
	
	On the other hand, if $x \in \fS_{\nu\wedge w \lambda}$, we have
	\begin{align*}
	{}_\nu\bone_\mu \cdot{}_\mu\bone_{w\lambda} \cdot w_* & = {}_\nu\bone_\mu \cdot{}_\mu\bone_{w\lambda} \cdot x_* \cdot w_* \\
	& = x_* \cdot {}_\nu\bone_{x^{-1}\mu} \cdot{}_{x^{-1}\mu}\bone_{w\lambda} \cdot  w_*\\
	& =  {}_\nu\bone_{x^{-1}\mu} \cdot{}_{x^{-1}\mu}\bone_{w\lambda} \cdot  w_*\\
	\end{align*}
	again using relations (b) and (c).  So we can further restrict the elements \eqref{eqn:R' generator}
    by taking one representative $\mu$ for each orbit of $\fS_{\nu\wedge w \lambda}$ on partitions below $\nu\wedge w \lambda$. This completes the proof of the Theorem.
\end{proof}

%%%%%
\begin{example}\label{ex:n=3}

As an example, let us look at the ring $R_3$.  There are three idempotents: $e_{(1^3)}$, $e_{(21)}$, and $e_{(3)}$.  Then the category of $R_3$-modules is identified with representations of the quiver

\[\xymatrix@C+20pt{V_{(1^3)} \ar@(u,ul)_s[]\ar@(d,dl)[]^t\ar@<.5ex>[r]^p & V_{(21)}\ar@<.5ex>[l]^q \ar@<.5ex>[r]^x & V_{(3)}\ar@<.5ex>[l]^y
}\]
where we put
\[\begin{matrix}
s = (1 2) & t = (2 3) \in \fS_3 \\
p = {}_{(21)}1_{(1^3)} & q = {}_{(1^3)}1_{(21)} \\
x = {}_{(3)}1_{(21)} & y = {}_{(21)}1_{(3)},
\end{matrix}
\]
subject to the relations (from Theorem \ref{thm: presenting Rn})
\begin{align*}
s^2 & = t^2 = (st)^3 = e_{(1^3)}  &\mbox{(relation (b) + relations in $\kk[\fS_3]$)}\\
ps & = p \;\;\; sq = q & \mbox{(from relation (c))}\\ 
xp & = xpt\;\;\;\; qy=tqy & \mbox{(from relation (c))}\\
qp & = e_{(1^3)}+s & \mbox{(from relation (d))}\\
yx & = e_{(21)} + ptq & \mbox{(from relation (d))}.
\end{align*}
Here relation (a) shows that the maps ${}_{(1^3)}1_{(3)}$ and ${}_{(3)}1_{(1^3)}$ are not needed, and relation (c) implies that the $\fS_3$-actions on $V_{(21)}$ and $V_{(3)}$ are trivial.  We have fixed our choice of set partitions so that 
$\fS_{(21)} = \fS_2 \times \fS_1 = \langle s\rangle$.

\end{example}
%%%%%

%%%%%
\subsection{Relation with the Schur and Hilbert-Schur algebras}\label{sec:relation with Schur}
%%%%%

In this section we show that the Hilbert-Schur algebra $HS_\kk(n,n)$ is Morita equivalent to the algebra $R_{[n]} \otimes \kk$.

We first recall an analogous fact for the Schur algebra $S_\kk(n,d)$.

The Schur algebra is the algebra
\[S_\kk(n,d) := \End_{\fS_d}((\kk^n)^{\otimes d})\]
of endomorphisms of the $d$th tensor power $(\kk^n)^{\otimes d}$ that commute with the natural right action of $\fS_d$ permuting the factors.  Tensor products of elements of the standard basis $e_1, \dots, e_n$ of $\kk^n$ give a basis of $(\kk^n)^{\otimes d}$ indexed by $[n]^d$.  This basis is invariant under the action of $\fS_d$, so we can decompose $(\kk^n)^{\otimes d}$ into permutation modules indexed by $\fS_d$ orbits of $[n]^d$:
\[(\kk^n)^{\otimes d} = \bigoplus_{O\in [n]^d/\fS_d} \kk[O].\]
Each orbit $O$ is isomorphic as a right $\fS_d$-set to a quotient $O_\lambda := \fS_{\lambda}\backslash \fS_d$ for a set partition $\lambda \in \Part{[d]}$ with at most $n$ parts, so the module $\kk[O]$ is just the induced module $\Ind_{\fS_\lambda}^{\fS_d} \bone_{\fS_\lambda}$.  Note that every partition $\lambda$ of $d$ with at most $n$ parts is represented by some orbit, and in particular when $n\geq d$, then every partition appears.

For partitions $\lambda,\mu$ of $d$, we have
\begin{align*}
\Hom_{\kk[\fS_d]}(\kk[O_\lambda],\kk[O_\mu]) & = (\kk[\fS_\mu\backslash \fS_d] \otimes \kk[\fS_\lambda\backslash \fS_d]^*)^{\fS_d} \\
& \cong (\kk[\fS_\mu\backslash\fS_d] \otimes \kk[\fS_d/\fS_\lambda])^{\fS_d}\\
& \cong \kk[\fS_\mu\backslash\fS_d \times_{\fS_d} \fS_d/\fS_\lambda]\\
& \cong \kk[\fS_\mu\backslash \fS_d / \fS_\lambda].
\end{align*}
If we view this last space as the space of $(\fS_\mu \times \fS_\lambda)$-invariant functions on $\fS_d$, then the composition of homomorphisms $\phi \in \Hom_{\kk[\fS_d]}(\kk[O_\lambda],\kk[O_\mu])$, $\psi \in \Hom_{\kk[\fS_d]}(\kk[O_\mu],\kk[O_\nu])$
is given by the convolution of functions:
\[(\psi\circ \phi)(x) = \sum_{\fS_\mu y \in O_\mu} \psi(xy^{-1})\phi(y).\]

Thus we have the following well-known fact:
\begin{proposition} The Schur algebra $S_\kk(n,d)$ is Morita equivalent to the convolution algebra
\[\bigoplus_{\lambda,\mu\in \Part{n,[d]}} \kk[\fS_\mu\backslash \fS_d / \fS_\lambda], \]
where $\Part{n,[d]}$ denotes the set of partitions of $[d]$ with at most $n$ non-zero parts.
\end{proposition}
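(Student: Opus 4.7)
The plan is to exploit the decomposition of $(\kk^n)^{\otimes d}$ into permutation modules already carried out in the paragraphs above, and then to pass from the full endomorphism algebra to its basic version via a standard Morita reduction.

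First, I would observe that the decomposition
\[(\kk^n)^{\otimes d} = \bigoplus_{O \in [n]^d/\fS_d} \kk[O]\]
groups orbits by their stabilizer type: two orbits $O$, $O'$ are isomorphic as right $\fS_d$-sets if and only if the associated partitions $\hat\lambda, \hat\lambda' \in \Part{d}$ coincide and have at most $n$ parts. Writing $m_\lambda$ for the number of orbits of each isomorphism type, this gives
\[(\kk^n)^{\otimes d} \cong \bigoplus_{\lambda \in \Part{n,[d]}} \kk[O_\lambda]^{\oplus m_\lambda},\]
where by choosing representatives we may take one set partition $\lambda$ per integer partition with at most $n$ parts, and where every such partition appears with $m_\lambda \ge 1$.

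Next, I would invoke the standard Morita principle: for any ring $A$ and projective $A$-module $P = \bigoplus_i P_i^{\oplus m_i}$ with $m_i \ge 1$, the endomorphism ring $\End_A(P)$ is Morita equivalent to $\End_A(\bigoplus_i P_i)$. Applying this to $A = \kk[\fS_d]$ and $P = (\kk^n)^{\otimes d}$ (which is projective as a direct sum of permutation modules), we obtain that
\[S_\kk(n,d) = \End_{\fS_d}((\kk^n)^{\otimes d}) \;\text{ is Morita equivalent to }\; \End_{\fS_d}\Bigl(\bigoplus_{\lambda \in \Part{n,[d]}} \kk[O_\lambda]\Bigr).\]

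Finally, the identification of the latter algebra with the convolution algebra is exactly the computation already carried out in the excerpt: the calculation
\[\Hom_{\kk[\fS_d]}(\kk[O_\lambda],\kk[O_\mu]) \cong \kk[\fS_\mu \backslash \fS_d / \fS_\lambda]\]
decomposes $\End_{\fS_d}\bigl(\bigoplus_\lambda \kk[O_\lambda]\bigr)$ as a direct sum over pairs $(\lambda,\mu) \in \Part{n,[d]}^2$ of the stated $(\fS_\mu,\fS_\lambda)$-invariant function spaces, and the composition of homomorphisms was shown to be given by the convolution product $(\psi \circ \phi)(x) = \sum_{\fS_\mu y \in O_\mu} \psi(xy^{-1})\phi(y)$. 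This matches the multiplication on the asserted convolution algebra, completing the identification. The only non-formal ingredient is the Morita reduction step; everything else is bookkeeping assembled from the material preceding the statement.
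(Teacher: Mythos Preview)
Your approach is correct and matches the paper's, which simply states the proposition as a consequence of the preceding discussion; you have helpfully made the Morita reduction step explicit rather than leaving it implicit.

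Two small corrections are worth noting. First, permutation modules are \emph{not} in general projective over $\kk[\fS_d]$ in positive characteristic (the trivial module is a counterexample), but this is irrelevant: the Morita principle you invoke only requires that each isomorphism type of summand appears with multiplicity at least one on both sides, not projectivity. Second, your indexing is slightly tangled: the orbits in $[n]^d/\fS_d$ are naturally labeled by weak compositions of $d$ into $n$ parts, while the target algebra is indexed by $\Part{n,[d]}$, the set of all set partitions of $[d]$ with at most $n$ parts. These are different sets, and neither is the set of integer partitions you reduce to in your first displayed isomorphism. What makes the argument go through is that all three direct sums involve exactly the same isomorphism classes of permutation modules $\kk[\fS_\lambda\backslash\fS_d]$ (one for each integer partition with at most $n$ parts), each appearing at least once; hence their endomorphism rings are pairwise Morita equivalent. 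You should state this cleanly rather than conflating the three indexing sets.
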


To carry out the analogous reasoning for the Hilbert-Schur algebra we need to recall some tensor category preliminaries.  We refer the reader to \cite{EGNO} for details about tensor categories.

For a finite group $G$ with multiplication $m\colon G\times G \to G$, the category $\Sh(G)$ of finite-dimensional sheaves of $\C$-vector spaces on $G$ is a finite tensor category, with tensor operation 
\[V\otimes W = m_*(V \boxtimes W), \;\; V,W \in \Sh(G).\]
(In \cite{EGNO}, this category is denoted $\Vec_G$.)  If $Z$ is a finite set with a right $G$-action
given by $a\colon Z\times G \to Z$, then $\Sh(Z)$ is a (right) module category over $\Sh(G)$ via 
\[(A,V) \mapsto a_*(A \boxtimes V),\;\; A \in \Sh(Z),\; V \in \Sh(G).\]
If the action is transitive, so that $Z \cong H\backslash G$ as right $G$-sets for a subgroup $H \subset G$,
then this module category can be described as follows.  The constant sheaf $\cO_H = \underline{\C}_H$ supported on $H$ is naturally a ring object in $\Sh(G)$; this amounts to considering the group algebra $\C[H]$ as a $G$-graded object.  Then $\Sh(H\backslash G)$ is naturally isomorphic as a module category over $\Sh(G)$ to $\Mod_{\Sh(G)}(\cO_H)$, the category of (left) modules over the ring object $\cO_H$.

Recall from the introduction that we defined the Hilbert-Schur algebra as
\[HS_\kk(n,d) := K(\End_{\Sh(\fS_d)}(\Sh([n]^d)))\otimes \kk,\]
where $\End_{\Sh(\fS_d)}(\Sh([n]^d))$ is the category of (additive) endofunctors of $\Sh([n]^d)$ as a module category over $\Sh(\fS_d)$. 

As in the vector space case, $\Sh([n]^d)$ decomposes as a module category over $\Sh(\fS_d)$ into permutation modules indexed by $\fS_d$-orbits of $[n]^d$:
\[\Sh([n]^d) = \bigoplus_{O\in [n]^d/\fS_d} \Sh(O).\]
Each orbit $O$ is isomorphic as a right $\fS_d$-set to a quotient $O_\lambda := \fS_{\lambda}\backslash \fS_d$ for a set partition $\lambda \in \Part{[d]}$ with at most $n$ parts.

For subgroups $H,K \subset G$, \cite[Proposition 7.11.1]{EGNO} gives an isomorphism of categories
\begin{align*}
\Fun_{\Sh(G)}(\Sh(H\backslash G), \Sh(K\backslash G)) & \simeq \Fun_{\Sh(G)}(\Mod_{\Sh(G)}(\cO_H), \Mod_{\Sh(G)}(\cO_K)) \\
& \simeq \Bimod_{\Sh(G)}(\cO_K,\cO_H)
\end{align*}
between right exact module functors $\Sh(H \backslash G) \to \Sh(K\backslash G)$ and $\cO_K$-$\cO_H$ bimodules.  These in turn are naturally isomorphic to $\Sh([K\backslash G/H])$.

Under this equivalence, composition of module functors is sent to tensor product of bimodules: for 
$V \in \Bimod_{\Sh(G)}(\cO_H,\cO_K)$ and $W \in \Bimod_{\Sh(G)}(\cO_K,\cO_H)$, the tensor $V \otimes_{\cO_K} W$ is obtained from the tensor $V \otimes W$ taken in $\Sh(G)$ by taking coinvariants for the action of $\C[K]$.  

Specializing to the case $G=\fS_d$, $H=\fS_\lambda$ and $K=\fS_\mu$, we have
\[ \Fun_{\Sh(\fS_d)}(\Sh(\fS_\lambda\backslash \fS_d),\Sh(\fS_\mu\backslash\fS_d)) \simeq \Sh([\fS_\mu \backslash \fS_d/\fS_\lambda]).\]

We conclude:
\begin{proposition}
There is a Morita equivalence
\[HS_\kk(n,d) \stackrel{\mathrm{Morita}}{\simeq}  \bigoplus_{\lambda,\mu\in \Part{n,[d]}} K(\Sh([\fS_\mu \backslash \fS_d/\fS_\lambda]))\otimes \kk, \]
where the multiplication on the right is defined as in Section~\ref{subsec-Cn}.
\end{proposition}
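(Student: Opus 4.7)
The plan is to decompose the module category $\Sh([n]^d)$ into simple pieces indexed by $\fS_d$-orbits, use the cited EGNO equivalence to identify the endomorphism category as a ``matrix'' of convolution categories, and then pass to the decategorified level, where the only subtlety is that the sum on the right of the proposition is indexed by all set partitions while the natural decomposition is indexed by orbit representatives.

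First, I would decompose $\Sh([n]^d) = \bigoplus_{O \in [n]^d/\fS_d} \Sh(O)$ as a module category over $\Sh(\fS_d)$, as already done in the excerpt. Each orbit $O$ is transitive, and choosing a basepoint gives a right $\fS_d$-equivariant isomorphism $O \cong \fS_\lambda \backslash \fS_d$ for some $\lambda \in \Part{n,[d]}$; conversely, each $\lambda \in \Part{n,[d]}$ arises in this way. Since the direct sum decomposition of a module category induces a ``matrix category'' decomposition of its endomorphism category, we obtain an equivalence
\[ \End_{\Sh(\fS_d)}(\Sh([n]^d)) \;\simeq\; \bigoplus_{O,O'} \Fun_{\Sh(\fS_d)}(\Sh(O),\Sh(O')), \]
with composition given by the usual ``matrix multiplication'' of functors.

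Next, I would apply the equivalence recalled just above the statement (from \cite[Prop.\ 7.11.1]{EGNO}) to identify each entry: for orbits $O \cong \fS_\lambda \backslash \fS_d$ and $O' \cong \fS_\mu \backslash \fS_d$, the category $\Fun_{\Sh(\fS_d)}(\Sh(O),\Sh(O'))$ is equivalent to $\Sh([\fS_\mu \backslash \fS_d / \fS_\lambda])$, and composition of functors corresponds to the bimodule tensor product over $\cO_{\fS_\mu}$, which unwinds to convolution of sheaves along the multiplication of $\fS_d$. Taking Grothendieck groups and tensoring with $\kk$ then yields a $\kk$-algebra isomorphism
\[ HS_\kk(n,d) \;\cong\; \bigoplus_{[\lambda],[\mu]} K(\Sh([\fS_\mu \backslash \fS_d/\fS_\lambda])) \otimes \kk, \]
where $[\lambda],[\mu]$ run over a fixed set of representatives of $\fS_d$-orbits in $\Part{n,[d]}$, and the multiplication is the convolution multiplication of Section~\ref{subsec-Cn}.

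Finally, to upgrade this isomorphism to the Morita equivalence in the statement, I would observe that the algebra on the right of the proposition sums over \emph{all} set partitions in $\Part{n,[d]}$, whereas we have obtained the sum only over orbit representatives. For any $\lambda \in \Part{n,[d]}$ and any $z \in \fS_d$, the idempotents $\mathbf 1_\lambda$ and $\mathbf 1_{z\lambda}$ in the larger algebra are equivalent, with the equivalence implemented by the class of $z_*$ (compare Remark~\ref{rmk:C[n] and Cn}): the products $\mathbf 1_{z\lambda} \cdot z_* \cdot \mathbf 1_\lambda$ and $\mathbf 1_\lambda \cdot z^{-1}_* \cdot \mathbf 1_{z\lambda}$ are mutually inverse as morphisms between the projective summands. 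Hence collapsing each $\fS_d$-orbit to a single representative is a Morita equivalence, and the resulting contracted algebra is exactly the orbit-representative convolution algebra that we identified with $HS_\kk(n,d)$.

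The only real work is in the second step: one must check that the composition of module functors really corresponds to the convolution multiplication as defined in Section~\ref{subsec-Cn}, rather than some twisted variant. This amounts to chasing through the description of $\Fun_{\Sh(\fS_d)}$ in terms of $\cO_{\fS_\mu}$-$\cO_{\fS_\lambda}$-bimodules and verifying that the bimodule tensor product $V \otimes_{\cO_{\fS_\mu}} W$ reproduces the formula $(A \circ B)_x = [\bigoplus_{yz=x} A_y \otimes B_z]^{\fS_\mu}$ from Section~\ref{subsec-Cn}; this is a direct unwinding of definitions.
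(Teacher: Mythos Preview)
Your approach is essentially the paper's, but there are two points that need correcting.

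First, the indexing in your Morita step is off. You claim the decomposition of $\Sh([n]^d)$ yields a sum ``only over orbit representatives'' of $\Part{n,[d]}$, which you then expand to all of $\Part{n,[d]}$. But the natural index set for the decomposition is $[n]^d/\fS_d$, and the map sending an orbit to the stabilizer partition of a basepoint is \emph{not} a bijection onto a set of representatives of $\Part{n,[d]}/\fS_d$: distinct orbits in $[n]^d/\fS_d$ can have identical stabilizers (for $n=d=2$ there are three orbits but only two set partitions). So the Morita equivalence is not a one-directional ``expansion''; rather, both $HS_\kk(n,d)$ (idempotents indexed by $[n]^d/\fS_d$) and the right-hand algebra (idempotents indexed by $\Part{n,[d]}$) are Morita equivalent to a common contraction with one idempotent per integer partition of $d$ with at most $n$ parts. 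Your equivalent-idempotents mechanism via $z_*$ is the right tool, but it must be applied on both sides.

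Second, you assert that the bimodule tensor $V \otimes_{\cO_{\fS_\mu}} W$ directly reproduces the invariants formula $(A\circ B)_x = [\bigoplus_{yz=x} A_y \otimes B_z]^{\fS_\mu}$ of Section~\ref{subsec-Cn}. It does not: the bimodule tensor product is a \emph{coinvariants} construction (a coequalizer), whereas the composition in $\cC_{[n]}$ is defined via $\fS_\mu$-\emph{invariants}. The paper notes this immediately after the proposition and resolves it by observing that $\C[\fS_\mu]$-modules are semisimple, so invariants and coinvariants are naturally isomorphic. You should include this remark; without it the identification of multiplications is not justified.
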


Note that in Section~\ref{subsec-Cn}, composition is defined by taking invariants, not coinvariants.  But the category of $\C[K]$-modules is semisimple, so the invariant and coinvariant functors are naturally isomorphic.

When $n\geq d$, the right hand side is simply $R_{[d]}\otimes \kk$. Thus, letting $n=d$:
\begin{corollary} The Hilbert-Schur algebra $HS_\kk(n,n)$ is Morita equivalent to the algebra $R_{[n]} \otimes \kk$.
\end{corollary}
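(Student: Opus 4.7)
The plan is to derive this corollary directly from the preceding proposition, which identifies $HS_\kk(n,d)$ up to Morita equivalence with $\bigoplus_{\lambda,\mu\in \Part{n,[d]}} K(\Sh([\fS_\mu \backslash \fS_d/\fS_\lambda]))\otimes \kk$. The key observation is simply that when $n \geq d$, every set partition of $[d]$ has at most $d \leq n$ parts, so the indexing set $\Part{n,[d]}$ of partitions with at most $n$ parts coincides with the full set $\Part{[d]}$ of all set partitions. Specializing further to $n = d$ thus gives exactly the indexing set appearing in the definition of $R_{[n]}$.

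The remaining step is to match the multiplicative structures. By definition, $R_{[n]} = \bigoplus_{\lambda,\mu \in \Part{[n]}} K(\hom(\lambda,\mu))$ where $\hom(\lambda,\mu) = \Sh([\fS_\mu \backslash \fS_n / \fS_\lambda])$, and the multiplication is the composition in $\cC_{[n]}$ defined in Section~\ref{subsec-Cn}. This composition is exactly the convolution used in the proposition, so the identification of sets of idempotents extends verbatim to an isomorphism of algebras after tensoring with $\kk$. Since all compositions in the proof are natural, there is nothing further to check: the Morita equivalence produced by the proposition, when specialized to $n=d$, is by inspection a Morita equivalence $HS_\kk(n,n) \stackrel{\mathrm{Morita}}{\simeq} R_{[n]} \otimes \kk$.

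There is essentially no obstacle here beyond correctly unwinding definitions; the substantive content lives in the preceding proposition (which invokes \cite[Proposition 7.11.1]{EGNO} to identify module functors with bimodules over the ring objects $\cO_H$) and in the earlier remark that invariants and coinvariants for actions of the semisimple group algebra $\C[\fS_\mu]$ agree, so the composition in $\cC_{[n]}$ (defined via invariants) matches the tensor product of bimodules (defined via coinvariants). Once those points are in place, the corollary is immediate.
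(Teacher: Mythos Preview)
Your proposal is correct and follows essentially the same approach as the paper. The paper's argument is even terser—just the sentence ``When $n\geq d$, the right hand side is simply $R_{[d]}\otimes \kk$. Thus, letting $n=d$:''—but your elaboration on why $\Part{n,[d]} = \Part{[d]}$ when $n\geq d$, and your remarks about matching the multiplicative structures and the invariants/coinvariants identification, simply spell out what the paper leaves implicit or has already discussed just before the corollary.
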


%%%%%%%%%%%%%%%%%%%
\section{Perverse sheaves on $S^nX$}\label{sec:perv}
%%%%%%%%%%%%%%%%%%%

Let $X=\C^2$.  In this section we consider the topology of $S^n X = X^n/\fS_n$, the $n$-fold symmetric product of the plane.  More generally, for any set partition $\lambda \in \Part{[n]}$, we let $$S^\lambda X = X^n/\fS_\lambda \cong S^{\hat\lambda_1}X \times S^{\hat\lambda_2}X \times \dots\times S^{\hat\lambda_r}X,$$
where $\hat\lambda = (\hat\lambda_1,\hat\lambda_2, \ldots,\hat\lambda_r)$
is the associated integer partition.  Up to isomorphism it only depends on $\hat\lambda$, but just as with our $2$-category $\cC_{[n]}$ it will be convenient to make this definition for any set partition.

Let $\kk$ be a field.  If $Y$ is a complex algebraic variety, we denote by $D^b(Y,\kk)$ the constructible derived category of sheaves of $\kk$-vector spaces on $Y$.  For a fixed stratification $\cT$ of $Y$, let $\Perv_\cT(Y,\kk) \subset D^b(Y,\kk)$ be the abelian category of middle perversity perverse sheaves of $\kk$-vector spaces on $Y$ constructible with respect to $\cT$.  In this paper we are interested in the category $\Perv_\cS(S^n X,\kk)$ of perverse sheaves for a natural stratification $\cS$ which we now define.

%%%%%
\subsection{Stratification} 
%%%%%
For any partition $\lambda = (\lambda_1,\lambda_2, \ldots,\lambda_r)$ of $n = |\lambda| = \sum_i \lambda_i$, let $S^n_\lambda X \subset S^n X$ be the locally closed smooth subvariety of those configurations that can be expressed as $\sum_i \lambda_i p_i$ for distinct points $p_i \in X$.

This defines a stratification $\cS = \cS^n$ of $S^nX$: $$S^n X = \bigsqcup_{\lambda~ \vdash ~n} S^n_\lambda X.$$
Note that $\dim S^n_\lambda X = 2\ell(\lambda)$, where $\ell(\lambda)$ is the length or number of parts of $\lambda$.  We have $S^n_\lambda X \subset \overline{S^n_\mu X}$ if and only if $\mu$ is a refinement of $\lambda$.

More generally, we have a stratification $\cS^\lambda$ of $S^\lambda X$ obtained as the product of the stratification $\cS^{\hat\lambda_i}$ on each factor $S^{\hat\lambda_i} X$.  Note that when points from different factors coincide, it does not create a new stratum.  For example, if $\lambda$ is the minimal partition, so $\fS_\lambda = 1$ and $S^\lambda X = X^n$, then there is only one stratum.  In a slight abuse of notation, we will sometimes denote this stratification of $S^\lambda X$ simply by $\cS$.

To describe these strata more precisely, for any partition $\mu \le \lambda$ we can define the stratum $S^\lambda_\mu X$ to be the image in $S^\lambda X$ of the locus of points in $X^n$ whose stabilizer in $\fS_\lambda$ is $\fS_\mu$. Note that this stratum only depends on $\mu$ up to the action of $\fS_\lambda$, or in other words it only depends on the integer partitions that the set partition $\mu$ induces on each $\sim_\lambda$-equivalence class.

%%%%%
\subsection{Stratification preliminaries} \label{sec:stratification preliminaries}
%%%%%

In this section we collect some definitions and results about stratifications and constructible sheaves that we will need.  Since we will sometimes consider open subsets of $S^nX$ in the analytic topology, we work with complex analytic varieties rather than algebraic varieties.  The notion of stratification we will work with is the following.

\begin{definition}\label{defn:stratification}
	A stratification of an analytic variety $Y$ is a decomposition
	\[Y = \coprod_{S \in \cS} S\]
	into disjoint locally closed smooth subvarieties, such that for any $S \in \cS$ with $\dim S > 0$, we have
	\begin{enumerate}
		\item  the closure $\overline{S}$ is a union of elements of $\cS$, and
		\item for any point $y \in S$ there exists a neighborhood $U$ of $y$ in $Y$, an analytic variety $N$ (a normal slice) endowed with a stratification $$\cS_N = \{N_T \mid T \in \cS \;\mbox{and}\; S \subset \overline{T}\}$$
		 and an isomorphism
		$$\eta\colon   N \times (U\cap S) \to U$$ such that if $T \in \cS$ and $S\subset \overline{T}$, we have $\eta^{-1}(T) = N_T \times (U\cap S)$, the stratum $N_{S}$ is a single point $\{o\}$, and $\eta(o,s) = s$ for all $s \in U\cap S$. 
	\end{enumerate}
\end{definition}
\begin{remark}
In the condition (2), note that $\dim N < \dim Y$, so this definition is recursive rather than circular.  It is common to also include a condition that the stratification on $N$ is topologically a stratified cone with apex $\{o\}$, as well as compatibility relations between the maps $\eta$ for different strata.  But we will not need these more refined notions of stratifications.	
\end{remark}

It is clear that our decomposition $\cS$ of $S^nX$ satisfies condition (1); we explain why condition (2) holds in Proposition  \ref{prop:cS is a stratification} below.

A complex $\cF\in D^b(Y,\kk)$ is called \emph{$\cS$-constructible} if for every stratum $S \in \cS$, the cohomology sheaves of $\cF|_S$ are $\kk$-local systems of finite rank.  Let $D^b_\cS(Y,\kk)$ denote the full triangulated subcategory of $\cS$-constructible complexes.  The conditions on our stratification imply that the functors $j_*, j_!, j^*, j^!$ preserve $\cS$-constructibility, where $j$ is the inclusion of one locally closed union of strata into another.  

Next, we give several technical results about constructible sheaves that we will need.  For all of them we will take a variety $Y$ with a stratification $\cS$.  For our first result, let $Z$ be a locally contractible space, and give $Y \times Z$ the decomposition $\cS\times Z := \{S \times Z \mid S \in \cS\}$. 
Note that this is not a stratification in the sense of Definition \ref{defn:stratification} unless $Z$ is smooth, but we can define $(\cS\times Z)$-constructibility in the same way, and it is still true that the functors $j_*, j_!, j^*, j^!$ preserve $(\cS\times Z)$-constructibility for any inclusion $j$ of locally closed unions of elements of $\cS\times Z$.

For a point $x \in Z$, let $\imath_x\colon \{x\} \to Z$ be the inclusion.
If $W$ is another locally contractible space and $g\colon W \to Z$ is a continuous map, we will denote the product map $\id_{Y} \times g \colon  Y \times W \to Y \times Z$ by $\hat g$.  
Then for each $x \in Z$ we have the restriction functor
\[(\hat\imath_x)^*\colon D^b_{\cS\times Z}(Y\times Z, \kk) \to D^b_\cS(Y,\kk).\]
The following result is a more precise statement of \cite[Lemma 9.5]{BMHyperRingel}, and has the same proof.  

\begin{lemma}\label{lem:stratified homotopy}
	For any continuous path $\gamma\colon [0,1] \to Z$, there is a natural isomorphism of functors
	\[(\hat\imath_{\gamma(0)})^* \simeq (\hat\imath_{\gamma(1)})^*,\]
	which only depends on the class of $\gamma$ under endpoint-preserving homotopy.  In particular, if $Z$ is path-connected, all the functors $(\hat\imath_x)^*$ are isomorphic, and if $Z$ is simply connected they are all canonically isomorphic.
\end{lemma}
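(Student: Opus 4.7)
The plan is to reduce to the case where the parameter space is $[0,1]$ (or $[0,1]^2$ for homotopy invariance) by pulling back along $\hat\gamma$ (or the homotopy), and then to exploit the fact that the projection away from a compact contractible parameter space is proper and has fibers contained in a single stratum of the product stratification.

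Set $\cG := \hat\gamma^*\cF$, which is $(\cS\times[0,1])$-constructible on $Y\times[0,1]$.  Let $\pi\colon Y\times[0,1]\to Y$ be the projection and $i_t\colon Y\hookrightarrow Y\times[0,1]$ the slice at height $t$, so $(\hat\imath_{\gamma(t)})^*\cF = i_t^*\cG$.  The central claim is that the natural adjunction map
\[\pi_*\cG \;\longrightarrow\; \pi_*\bigl(i_{t*}i_t^*\cG\bigr) \;=\; i_t^*\cG\]
is an isomorphism for every $t\in[0,1]$; granted this, composing for $t=0,1$ produces the desired isomorphism $\alpha_\gamma \colon (\hat\imath_{\gamma(0)})^*\cF\xleftarrow{\sim}\pi_*\cG\xrightarrow{\sim}(\hat\imath_{\gamma(1)})^*\cF$.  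To verify the claim I check on stalks at each $y\in Y$.  Compactness of $[0,1]$ makes $\pi$ proper, so proper base change gives $(\pi_*\cG)_y \simeq R\Gamma(\{y\}\times[0,1],\,\cG|_{\{y\}\times[0,1]})$.  Since $y$ lies in a single stratum $S\in\cS$, the whole segment $\{y\}\times[0,1]$ lies in $S\times[0,1]$; constructibility then forces $\cG|_{\{y\}\times[0,1]}$ to have locally constant cohomology sheaves, which on the simply connected interval are in fact constant.  Hence $R\Gamma$ computes the stalk at any $(y,t)$, and the adjunction map above is the corresponding evaluation-at-$(y,t)$ isomorphism.

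For homotopy invariance, an endpoint-preserving homotopy $H\colon[0,1]^2\to Z$ between $\gamma$ and $\gamma'$ pulls back to a $(\cS\times[0,1]^2)$-constructible sheaf $\cK:=\hat H^*\cF$ on $Y\times[0,1]^2$.  The same proper base change argument, applied to the projection $P\colon Y\times[0,1]^2\to Y$ (still proper, with fibers contractible and contained in a single stratum), yields a natural isomorphism $P_*\cK\xrightarrow{\sim}i_{(s,t)}^*\cK$ for every $(s,t)\in[0,1]^2$.  The endpoint-preserving hypothesis makes $i_{(s,0)}^*\cK = (\hat\imath_{\gamma(0)})^*\cF$ and $i_{(s,1)}^*\cK = (\hat\imath_{\gamma(1)})^*\cF$ literally independent of $s$, giving a canonical isomorphism $(\hat\imath_{\gamma(0)})^*\cF\simeq P_*\cK\simeq (\hat\imath_{\gamma(1)})^*\cF$.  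Naturality of the adjunction, together with the factorization of $P$ through the projection to $Y\times[0,1]$ along the slices $s=0$ and $s=1$, identifies this canonical isomorphism with both $\alpha_\gamma$ and $\alpha_{\gamma'}$, forcing them to coincide.  The final statements about path-connected and simply connected $Z$ then follow from existence and uniqueness of homotopy classes of paths.

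The main technical obstacle is the stalk computation: one must use the product structure of the stratification carefully to pass from \emph{locally} constant cohomology on strata to \emph{genuinely} constant cohomology on the contractible transversal $\{y\}\times[0,1]$, and then identify the induced map on $R\Gamma$ via proper base change with the adjunction map.  The compatibility check in the homotopy-invariance step is more involved bookkeeping but, once the base change squares are set up coherently, amounts to a formal diagram chase.
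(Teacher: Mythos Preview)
Your argument is correct and is the standard one: pull back along the path to reduce to a product with $[0,1]$, then use properness of the projection and the fact that each fiber $\{y\}\times[0,1]$ sits in a single piece $S\times[0,1]$ of the product decomposition, so the restricted complex has constant cohomology sheaves there and the adjunction map $\pi_*\cG\to i_t^*\cG$ is a stalkwise isomorphism.  The paper does not spell out a proof but simply notes that the statement is a more precise form of \cite[Lemma~9.5]{BMHyperRingel} with the same proof; your write-up is exactly the kind of argument one expects there, so there is nothing to compare.
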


For our next result, we restrict to taking a product with a smooth variety, and we restrict from the whole derived category to perverse sheaves.

\begin{lemma}\label{lem:stratified pullback}
	Let $M$ be a smooth variety of dimension $d$, and let $\pi\colon  Y\times M \to Y$ be the projection.  If $M$ is simply-connected, then
	\[\pi^\dagger\colon \Perv_\cS(Y,\kk) \to \Perv_{\cS\times M}(Y \times M,\kk)\]
	is an equivalence of categories, where we put $\pi^\dagger\cF = \pi^*\cF[2d] \cong \cF \boxtimes \underline{\kk}_M[2d]$.
\end{lemma}
\begin{proof}
	The fact that $\pi^\dagger$ preserves perversity is standard; see \cite[Proposition 3.6.1]{AcharBook}, for example.  It clearly sends $\cS$-constructible objects to $\cS \times M$-constructible objects.  \cite[Theorem 3.6.6]{AcharBook} shows that $\pi^\dagger$ is fully faithful on perverse sheaves, so we only need to show that it is essentially surjective.  To see this, we first check that $\pi^\dagger$ is surjective on isomorphism classes of simple objects. A simple object of 
	$\Perv_{\cS\times M}(Y \times M,\kk)$ will be an IC complex $\IC(\cL)$ associated to a $\kk$-local system $\cL$ on a stratum $S \times M \in \cS \times M$ (we use the convention that all local systems and IC complexes are shifted to be perverse).  Since $M$ is simply connected, we have 
	$\cL \cong  \cL_0\boxtimes \underline{\kk}_M[2d]$ for some $\kk$-local system on $S$, and it follows that $\IC(\cL) \cong \pi^\dagger\IC(\cL_0)$.  To complete the proof of essential surjectivity it is enough to show that $\pi^\dagger$ induces an isomorphism
	\[\Ext^1_{\Perv(Y,\kk)}(\cF,\cG) \to \Ext^1_{\Perv(M\times Y,\kk)}(\pi^\dagger\cF,\pi^\dagger\cG)\] 
	for any $\cF,\cG\in \Perv(Y,\kk)$.  But we have
	\begin{align*}
	\Ext^1_{\Perv(M\times Y,\kk)}(\pi^\dagger\cF,\pi^\dagger\cG) & \cong \mathbb{H}^1(\RHom(\pi^\dagger\cF,\pi^\dagger\cG)) \\
	& \cong \mathbb{H}^1(\pi^*\RHom(\cF,\cG)),	
	\end{align*}
	since $\RHom$ commutes with smooth pullbacks (\cite[Principle 2.2.11]{AcharBook}).
	Then we have
	\[\mathbb{H}^\bullet(\pi^*\RHom(\cF,\cG))\cong \mathbb{H}^\bullet(\RHom(\cF,\cG))\otimes_\kk H^\bullet(M,\kk),\]
	and the result follows from the facts that $H^0(M,\kk) \cong \kk$, $H^1(M,\kk)=0$, and $\mathbb{H}^j(\RHom(\cF,\cG))=R\Hom^j(\cF,\cG)=0$ for $j < 0$.
\end{proof}
 
%%%%%
\subsection{Symmetrization and factorization maps} 
%%%%%

Let $\sigma_\lambda \colon S^\lambda X \to S^n X$ be the symmetrization (or sum) map.  
More generally, if $\lambda \le \mu$ we have an induced map $\sigma_\lambda^\mu\colon S^\lambda X \to S^\mu X$.  It is a proper finite map, and for any stratum $S^\lambda_\nu X$ of $S^\lambda X$ and any stratum $S^\mu_\xi X$ of $S^\mu X$, the restriction
\[S^\lambda_\nu X \cap (\sigma_\lambda)^{-1}(S^\mu_\xi X)\]
is a finite covering map.  As a result, the pushforward $(\sigma^\mu_\lambda)_*$ preserves perversity and $\cS$-constructibility.

On the other hand, strata of $S^\lambda X$ are not sent to strata of $S^\mu X$ by $\sigma_\lambda^\mu$, because points indexed by different $\sim_\lambda$ equivalence classes can collide, landing in a different stratum of $S^\mu X$.  As a result, the pullback $(\sigma^\mu_\lambda)^*$ does \emph{not} preserve $\cS$-constructibility.   To get a pullback functor on $\cS$-constructible perverse sheaves, we will view $S^\lambda X$ as a slice in $S^\mu X$ by defining injective maps $S^\lambda X \to S^\mu X$ induced by including the different factors of $S^\lambda X$ into separate ``bubbles" in $X$. 

Let $B = B_1(0)$ be the open unit ball in $X$ with respect to the standard metric, and fix once and for all a diffeomorphism $c\colon X \to B$.  We let $\Emb(X)$ denote the set of all maps 
$X \to X$ of the form $\phi = \phi_{p,r} = c^{-1}j_{p,r}c$, where $p \in B$, $0 < r \le 1 - |p|$ and 
\[j_{p,r}(x) = p + rx, \;x \in B.\]
Then $\Emb(X)$ is naturally a manifold with boundary diffeomorphic to $B \times (0,1]$.  It is a topological category: it contains the identity map $\phi_{0,1}$, and composition $(\phi, \phi') \mapsto\phi \circ \phi'$ is a continuous map $\Emb(X)\times \Emb(X) \to \Emb(X)$.

\begin{definition}
Let $\lambda \le \mu$ be partitions, and consider a map $\psi\colon X^n \to X^n$ of the form
\[\psi(p_1,\dots,p_n) = (\phi_1(p_1), \phi_2(p_2),\dots,\phi_n(p_n)),\]
where $\phi_1,\dots,\phi_n\in \Emb(X)$.  
We say that $\psi$ is 
\emph{$\lambda$-adapted} if whenever $i \stackrel{\lambda}{\sim} j$, we have $\phi_i = \phi_j$.  We say $\psi$ is \emph{$(\lambda,\mu)$-separated} if it is $\lambda$-adapted and whenever $i \not\stackrel{\lambda}{\sim} j$ but $i \stackrel{\mu}{\sim} j$, then the images $\phi_i(X)$ and $\phi_j(X)$ have disjoint closures.
If a map is $(\lambda,(n))$-separated, we say simply that it is $\lambda$-separated.
\end{definition}

It follows that the set of $\lambda$-adapted maps $X^n \to X^n$ is parametrized by 
\[\Emb(X,\lambda) := (\Emb(X)^n)^{\fS_\lambda} \cong \Emb(X)^{\ell(\lambda)}\] and the set of $(\lambda,\mu)$-separated maps is parametrized by an open subset
$\Emb_\mu(X,\lambda) \subset \Emb(X,\lambda)$. 

\begin{lemma}\label{lem:Emb is simply connected}
	For any partitions $\lambda \le \mu$, the set $\Emb_\mu(X,\lambda)$ is simply connected.
\end{lemma}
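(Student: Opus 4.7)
\emph{Plan.} I would reduce via a product decomposition to showing that the ordered configuration space of $k$ points in $\R^4$ is simply connected for every $k$. First, the $(\lambda,\mu)$-separation condition only constrains pairs of parts of $\lambda$ that lie in a common part of $\mu$, so there is a canonical homeomorphism
\[\Emb_\mu(X,\lambda) \;\cong\; \prod_{K\in\mu} E_K,\]
where $E_K$ is the space of tuples $(\phi_I)_{I\in\lambda,\, I\subset K}\in \Emb(X)^{k(K)}$ (with $k(K):=|\{I\in\lambda: I\subset K\}|$) whose images have pairwise disjoint closures. It therefore suffices to show each $E_K$ is simply connected, which is trivial when $k(K)\le 1$ since $\Emb(X)\cong B\times(0,1]$ is contractible.

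\emph{Collapsing radii to a configuration space.} Using the identification $\phi_{p,r}\leftrightarrow(p,r)$, write $k=k(K)$ and identify $E_K$ with the space of tuples $((p_i,r_i))_{i=1}^{k}$ with $p_i\in B$, $0<r_i\le 1-|p_i|$, and $\overline{B_{r_i}(p_i)}\cap \overline{B_{r_j}(p_j)}=\varnothing$ for $i\neq j$. The forgetful map onto the centers $E_K\to F_{k}(B)$, where $F_k(B)$ denotes the ordered configuration space of $k$ distinct points in $B$, has fibers given by open convex subsets of $(0,\infty)^{k}$, cut out by the linear inequalities $r_i\le 1-|p_i|$ and $r_i+r_j\le|p_i-p_j|$. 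Picking a continuous positive function $\rho$ on $F_{k}(B)$ small enough that the constant-radius tuple $(p_i,\rho(p))_i$ is admissible --- for instance $\rho(p)=\tfrac14\min(\min_i(1-|p_i|),\min_{i\neq j}|p_i-p_j|)$ --- gives a continuous section $s\colon F_{k}(B)\to E_K$, and straight-line interpolation of the radii within the convex fibers deformation-retracts $E_K$ onto $s(F_{k}(B))$. Thus $E_K\simeq F_{k}(B)$.

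\emph{Higher-dimensional configuration spaces and main obstacle.} Since $B$ is diffeomorphic to $X=\C^2\cong\R^4$, we have $F_{k}(B)\simeq F_{k}(\R^4)$. The Fadell--Neuwirth fibrations $F_j(\R^4)\to F_{j-1}(\R^4)$ have fiber $\R^4$ minus $j-1$ points, which is $2$-connected for every $j\ge 1$; induction on $j$ shows that $F_k(\R^4)$ is $2$-connected for all $k\ge 0$, hence in particular simply connected, completing the argument. The only point requiring care is the verification that the homotopy in the previous step stays inside $E_K$, but this is automatic from the convexity of the admissible-radii region; the boundary case $r_i=1-|p_i|$ causes no issue and can in any event be avoided by a harmless initial scaling $r_i\mapsto(1-\varepsilon)r_i$.
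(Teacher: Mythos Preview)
Your proof is correct and follows essentially the same route as the paper: decompose as a product over the parts of $\mu$, then show each factor is homotopy equivalent to an ordered configuration space in $B\cong\R^4$ by forgetting the radii (using convexity of the fibers and an explicit section). The only cosmetic difference is the final step: you invoke the Fadell--Neuwirth fibration to see that $F_k(\R^4)$ is $2$-connected, whereas the paper simply observes that $F_k(B)$ is the complement in $B^k$ of real codimension $4$ submanifolds and hence simply connected.
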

\begin{proof}
	If $\lambda$ is not the minimal partition, one can take the quotient of the indexing set $\{1,\dots,n\}$ by the equivalence relation $\sim_\lambda$, and let $\lambda',\mu'$ be the induced partitions on the quotient set.  Then $\lambda'$ is the minimal partition and $\Emb_\mu(X,\lambda)$ is homeomorphic to $\Emb_{\mu'}(X,\lambda')$, so we can assume without loss of generality that $\lambda$ is the minimal partition $(1^n)$.  On the other hand, if $\mu$ is not maximal, let $\mu_1,\dots,\mu_{r}$ be the sizes of the parts of $\mu$.  Then we have a homeomorphism
	\[\Emb_\mu(X,\lambda) \cong \Emb_{(\mu_1)}(X,(1^{\mu_1})) \times \dots \times \Emb_{(\mu_r)}(X,(1^{\mu_r})),\]
	so we can also assume without loss of generality that $\mu$ is the maximal partition $(n)$.
	Then $\Emb_\mu(X,\lambda)$ is given by tuples $(p_1,\dots,p_n)\in B^n$ and $(r_1,\dots,r_n)\in (0,1)^n$
	satisfying 
	\[0 < r_i \le 1 - |p_i|, \;\; r_i + r_j < |p_i - p_j|, \;\; i \ne j.\]
  Forgetting the numbers $r_i$ defines a map
	\[\pi\colon \Emb_\mu(X,\lambda) \to \Conf_n(B)\]
	to the configuration space of ordered $n$-tuples of distinct points in $B$.
	This map has a section $s\colon \Conf_n(B) \to \Emb_\mu(X,\lambda)$ given by $s(p_1,\dots,p_n) = ((p_1,\dots, p_n), (r,\dots,r))$, where
	\[r = \min(1 - |p_i|, |p_i - p_j|)/3.\]
	The maps $\pi$ and $s$ are homotopy inverses, since the fibers of $\pi$ are convex sets.  A configuration space of a simply connected manifold of dimension at least three is simply connected (see \cite[Corollary 2.3.4]{knudsen2018configuration}, for example), so it follows that  
    $\Emb_\mu(X,\lambda)$ is simply connected.
\end{proof}
 
For any partition $\lambda$, a $\lambda$-adapted map $\psi$ induces a map $S^\lambda X \to S^\lambda X$, which we denote by $\psi_\lambda$ or just $\psi$ if no confusion will occur.  The set of such maps is parametrized by $\Emb(X,\lambda)$.

\begin{lemma}\label{lem:psi isom id}
	If $\psi\colon S^\lambda X \to S^\lambda X$ is induced from a $\lambda$-adapted map, then the pullback 
	\[\psi^*\colon D^b_\cS(S^\lambda X, \kk) \to  D^b_\cS(S^\lambda X, \kk)\]
	is canonically isomorphic to the identity functor.
\end{lemma}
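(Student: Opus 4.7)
The plan is to realize $\psi$ as one member of a contractible family of $\lambda$-adapted maps (including the identity) and apply Lemma~\ref{lem:stratified homotopy}. Concretely, I form the universal map
\[\Psi\colon S^\lambda X \times \Emb(X,\lambda) \longrightarrow S^\lambda X, \qquad (y,\psi_0) \longmapsto (\psi_0)_\lambda(y),\]
and let $\iota \in \Emb(X,\lambda)$ denote the tuple in which every $\phi_i$ equals $\phi_{0,1} = \id_X$, so that $\Psi(\cdot,\iota) = \id_{S^\lambda X}$ while $\Psi(\cdot,\psi) = \psi_\lambda$. The parameter space is simply connected --- indeed contractible --- since $\Emb(X) \cong B \times (0,1]$ is contractible and $\Emb(X,\lambda) \cong \Emb(X)^{\ell(\lambda)}$. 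So once one checks that $\Psi^*$ sends $D^b_\cS(S^\lambda X,\kk)$ into $D^b_{\cS\times \Emb(X,\lambda)}(S^\lambda X\times \Emb(X,\lambda),\kk)$, Lemma~\ref{lem:stratified homotopy} produces a canonical natural isomorphism $(\hat\imath_\iota)^*\Psi^* \simeq (\hat\imath_\psi)^*\Psi^*$, which is precisely the desired isomorphism $\id \simeq \psi^*$.

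The only real work, and the main obstacle, is verifying the constructibility of $\Psi^*\cF$ for $\cF \in D^b_\cS(S^\lambda X,\kk)$. I would reduce it to the stratum-by-stratum identity
\[\Psi^{-1}(S^\lambda_\mu X) \;=\; S^\lambda_\mu X \times \Emb(X,\lambda) \qquad \text{for every }\mu \le \lambda,\]
since the restriction of $\Psi$ to such a preimage is then a continuous map of smooth manifolds landing in the single stratum $S^\lambda_\mu X$, and the pullback of a local system along a continuous map is a local system. The identity in turn reduces to the statement that each $(\psi_0)_\lambda$ is an injection of $S^\lambda X$ into itself preserving every stratum setwise. This is easy to verify directly: on $X^n$ the map $\psi_0$ is given by a tuple of injective embeddings $(\phi_1,\dots,\phi_n)$ with $\phi_i = \phi_j$ whenever $i \stackrel{\lambda}{\sim} j$, so a permutation $w \in \fS_\lambda$ stabilizes $(\phi_1(p_1),\dots,\phi_n(p_n))$ if and only if it stabilizes $(p_1,\dots,p_n)$. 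In other words, the coincidence pattern within each $\lambda$-equivalence class --- which is exactly what determines the stratum $S^\lambda_\mu X$ --- is preserved on the nose, and evaluating the canonical isomorphism provided by Lemma~\ref{lem:stratified homotopy} at $\iota$ and $\psi$ yields the conclusion.
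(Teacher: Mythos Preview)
Your proof is correct and follows essentially the same approach as the paper: form the universal $\lambda$-adapted map $\Psi$, observe that $\Psi^*\cF$ is $(\cS\times \Emb(X,\lambda))$-constructible because each $(\psi_0)_\lambda$ preserves strata, and apply Lemma~\ref{lem:stratified homotopy} using that $\Emb(X,\lambda)$ is simply connected and contains the identity. You supply a bit more detail on the constructibility step than the paper does, but the argument is the same.
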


\begin{proof}
	Consider the ``universal $\lambda$-adapted map"
	\[\Psi\colon S^\lambda X \times \Emb(X,\lambda) \to S^\lambda X, \;\;\; (x,\psi) \mapsto \psi_\lambda(x).\]
	If $\cF$ is $\cS$-constructible, then $\Psi^*\cF$ is constructible with respect to the stratification $\{S \times  \Emb(X,\lambda)\}_{S \in \cS}$.  
	Since $\Emb(X,\lambda)$ is simply connected, Lemma \ref{lem:stratified homotopy} implies that for any two $\lambda$-adapted maps $\psi,\psi'$ there is a natural isomorphism $(\psi_\lambda)^* \simeq (\psi'_\lambda)^*$ on $D^b_\cS(S^\lambda,\kk)$.  Since the identity $(\phi_{0,1},\dots, \phi_{0,1})$ is $\lambda$-adapted, this proves the result.
\end{proof}

Now take a partition $\mu \ge \lambda$, and consider the composition \[\omega^\mu_\lambda = \sigma^\mu_\lambda \circ \psi_\lambda\colon S^\lambda X\to S^\mu X\] 
of a $(\lambda,\mu)$-separated map $\psi_\lambda\colon S^\lambda X \to S^\lambda X$ with the symmetrization map $\sigma^\mu_\lambda: S^\lambda X \to S^\mu X$.  We will call a map such as $\omega^\mu_\lambda$
a \emph{factorization map}.  It is an open embedding, and it is compatible with the stratifications $\cS$ on the source and target, in the sense that $\omega_\lambda^\mu(S^\lambda_\nu X) \subset S^\mu_\nu X$ for any partition $\nu$ with $\nu \le \lambda \le \mu$.  (Note that the stratum $S^\lambda_\nu X$ is determined by $\nu$ up to the action of $\fS_\lambda$, while $S^\mu_\nu X$ is determined only up to the action of the smaller group $\fS_\mu$, so more than one stratum of $S^\lambda X$ can be sent into the same stratum of $S^\mu X$.)

In particular we get a $t$-exact functor:
\[(\omega^\mu_\lambda)^*\colon D^b_\cS(S^\mu X, \kk) \to D^b_{\cS}(S^\lambda X, \kk).\]

\begin{remark}\label{rmk:pullback stratification}
	Note that Lemma \ref{lem:psi isom id} does not imply that the pullback $(\omega^\mu_\lambda)^*$ is isomorphic to $(\sigma^\mu_\lambda)^*$ on $\Perv_\cS(S^\mu X, \kk)$, since $(\sigma^\mu_\lambda)^*$ does not preserve $\cS$-constructibility.  Instead one can think of $(\omega^\mu_\lambda)^*$ as giving the ``generic behavior" of $(\sigma^\mu_\lambda)^*$ on a set where points from different parts of $S^\lambda X$ do not collide when projected into $S^\mu X$.  
	
	Put another way, the map $\omega^\mu_\lambda$ identifies $S^\lambda X$ with an open neighborhood of a point of the stratum $S^\mu_\lambda X \subset S^\mu X$. This fact can be used to give a proof that our stratification $\cS$ satisfies the second condition of Definition \ref{defn:stratification}.

%	Although there is no group action on $S^nX$ such that the strata $S^n_\lambda X$ are orbits, any point of the stratum $S^\lambda X$ is in the image of a factorization map $\omega_\lambda\colon S^\lambda X\to S^nX$, and $(\omega_\lambda)^{-1}(S^n_\lambda X) = S^\lambda_\lambda X \cong X^{\ell(\lambda)}$ has a structure of an additive group which acts on all of $S^\lambda X$.  The existence of a normal slice $N$ and local trivialization $\eta$ follow easily from this.
\end{remark}

\begin{proposition}\label{prop:cS is a stratification}
	The decomposition $\cS$ of $S^\mu X$ is a stratification in the sense of Definition \ref{defn:stratification}.
\end{proposition}
\begin{proof}
	Take a partition $\lambda \le \mu$, and let $p$ be any point of the stratum $S = S^\mu_\lambda X$.  There is a factorization map $\omega^\mu_\lambda\colon S^\lambda X \to S^\mu X$ with open image $U \subset S^\mu X$ such that $p \in U$.
%	 which contains $p$ in its image, which is an open set $U \subset S^\mu X$.  
	
	We view $G = (X^n)^{\fS_\lambda} \cong X^{\ell(\lambda)}$ as a group under addition.  Its action on $X^n$ descends to an action on $S^\lambda X$ which preserves the strata.  The action is free, and we get an isomorphism
	$\theta\colon G\times N \stackrel{\sim}\longrightarrow S^\lambda X$, where $N$ is the quotient $(S^\lambda X)/G$ (alternatively, $N$ can be described as the subvariety of $S^\lambda X$ consisting of points for which the parts in each $\sim_\lambda$-equivalence class sum to zero).  Let $\cS_N'$ be the decomposition of $N$ by the quotients $(S^\lambda_\nu)/G$ of strata of $S^\lambda X$, indexed by $\fS_\lambda$-orbits of partitions $\nu \le \lambda$.
	
	The composition $\eta = \omega^\mu_\lambda \circ \theta$ then gives the local isomorphism required by property (2) of Definition \ref{defn:stratification}.  The action identifies the smallest stratum $S^\lambda_\lambda X$ of $S^\lambda X$ with $G$, and so the corresponding stratum of $N$ is a single point.  Then the map $\omega^\mu_\lambda$ further identifies $G$ with $U \cap S$.  For any $\nu\le \lambda$, the inverse image $(\omega^\mu_\lambda)^{-1}(S^\mu_\nu X)$ is the union of $S^\lambda_{w\nu} X$, over a set of representatives $w$ of cosets in $\fS_\mu/\fS_\lambda$.  Taking the quotient of these unions by the $G$-action gives a stratification $\fS_N$ of $N$ satisfying the property (2).  It also follows that $p$ is in the closure $\overline{S^\mu_\nu X}$ if and only if $\nu \le \lambda$, which implies the property (1). 
\end{proof}

\begin{remark}
Note that for each factorization map $\omega^\mu_\lambda: S^\lambda X \to S^\mu X$ there is a \textit{unique} $(\lambda,\mu)$-separated map $\psi_\lambda: S^\lambda X \to S^\lambda X$ such that $\omega^\mu_\lambda = \sigma^\mu_\lambda \psi_\lambda$.  In other words, the map from $\Emb_\mu(X,\lambda)$ to the set of factorization maps $S^\lambda X \to S^\mu X$ is injective (and thus a bijection). 
\end{remark}

\begin{lemma}\label{lem:omega pullback}
	Let $\psi_1, \psi_2\colon S^\lambda X \to S^\lambda X$ be two $(\lambda,\mu)$-separated maps, and let $\omega_1 = \sigma_\lambda^\mu \circ \psi_1$, $\omega_2 = \sigma_\lambda^\mu \circ \psi_2$ be the corresponding factorization maps.  Then the functors
\[(\omega_1)^*,(\omega_2)^* \colon D^b_\cS(S^\mu X, \kk) \to D^b_{\cS}(S^\lambda X, \kk)\] 
are canonically isomorphic.
\end{lemma}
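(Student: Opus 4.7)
The plan is to use the stratified homotopy result (Lemma~\ref{lem:stratified homotopy}) exactly as in the proof of Lemma~\ref{lem:psi isom id}, but applied to the parameter space $\Emb_\mu(X,\lambda)$ of $(\lambda,\mu)$-separated maps, which is simply connected by Lemma~\ref{lem:Emb is simply connected}.

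First, I would introduce the ``universal factorization map''
\[
\Omega\colon S^\lambda X \times \Emb_\mu(X,\lambda) \to S^\mu X, \qquad (x,\psi)\mapsto \sigma^\mu_\lambda(\psi_\lambda(x)),
\]
whose restriction to the slice $S^\lambda X \times \{\psi\}$ equals the factorization map $\omega_\psi := \sigma^\mu_\lambda\circ \psi_\lambda$. In the notation of Section~\ref{sec:stratification preliminaries}, $\omega_\psi = \Omega \circ \hat\imath_\psi$ where $\hat\imath_\psi$ is the inclusion of the slice at $\psi \in \Emb_\mu(X,\lambda)$.

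Next, the crucial step is to verify that for any $\cF \in D^b_\cS(S^\mu X,\kk)$, the pullback $\Omega^*\cF$ is constructible with respect to the product decomposition $\{S \times \Emb_\mu(X,\lambda)\}_{S \in \cS}$ on $S^\lambda X \times \Emb_\mu(X,\lambda)$. This is where the $(\lambda,\mu)$-separated condition enters: since each factorization map $\omega_\psi$ satisfies $\omega_\psi(S^\lambda_\nu X) \subset S^\mu_\nu X$ for every $\nu \le \lambda$ (as recalled just before the statement), we have $\Omega(S^\lambda_\nu X \times \Emb_\mu(X,\lambda)) \subset S^\mu_\nu X$. The restriction of $\cF$ to the single stratum $S^\mu_\nu X$ has locally constant cohomology sheaves, so the same holds for its pullback along $\Omega$ restricted to $S^\lambda_\nu X \times \Emb_\mu(X,\lambda)$.

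Given this constructibility, Lemma~\ref{lem:stratified homotopy} applied to the simply connected parameter space $\Emb_\mu(X,\lambda)$ yields canonical isomorphisms $(\hat\imath_{\psi})^* \Omega^* \simeq (\hat\imath_{\psi'})^* \Omega^*$ for any two points $\psi, \psi' \in \Emb_\mu(X,\lambda)$. Taking $\psi$ and $\psi'$ to be the points corresponding to $\psi_1$ and $\psi_2$ gives the desired canonical isomorphism $(\omega_1)^* \simeq (\omega_2)^*$. The main (mild) obstacle is the constructibility verification above; once that is in hand, the result follows formally from the simple connectivity of $\Emb_\mu(X,\lambda)$ together with the machinery already developed in Section~\ref{sec:stratification preliminaries}.
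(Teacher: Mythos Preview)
Your proposal is correct and follows essentially the same approach as the paper: introduce the universal factorization map $\Omega$, check that $\Omega^*\cF$ is constructible for the product stratification on $S^\lambda X \times \Emb_\mu(X,\lambda)$, and then invoke Lemma~\ref{lem:stratified homotopy} together with the simple connectivity of $\Emb_\mu(X,\lambda)$ from Lemma~\ref{lem:Emb is simply connected}. Your justification of constructibility (via $\omega_\psi(S^\lambda_\nu X)\subset S^\mu_\nu X$) is a clean direct argument equivalent to the paper's observation that $(\sigma^\mu_\lambda)^*\cF$ becomes $\cS$-constructible on the image of any $(\lambda,\mu)$-separated map.
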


\begin{proof}
	As in the proof of Lemma~\ref{lem:psi isom id}, we have a ``universal factorization map"
	\[\Omega\colon S^\lambda X \times \Emb_\mu(X,\lambda) \to S^\mu X, \;\;\; (x, \psi_\lambda) \mapsto \sigma^\mu_\lambda(\psi_\lambda(x)).\]
	Take $\cF\in D^b_\cS(S^\mu X, \kk)$.  Although $\cG = (\sigma_\lambda^\mu)^*\cF$ is not $\cS$-constructible, if $W = \psi_\lambda(S^\lambda X)$ is the image of a $\lambda$-separated map, then $\cG|_W$ is $\cS\cap W$-constructible.  Thus $\Omega^*\cF$ is constructible for the stratification
	$\{S\times \Emb_\mu(X,\lambda)\}_{S \in \cS_\lambda}$.  Since $\Emb_\mu(X,\lambda)$ is path-connected and simply-connected, the result follows again by Lemma \ref{lem:stratified homotopy}.
\end{proof}

%%%%%
\subsection{Biadjunction of $\sigma_*$ and $\omega^*$} 
%%%%%

Fix partitions $\lambda \le \mu \in \Part{[n]}$.  To keep the notation simple, for this section we will let 
\[\sigma \colon S^\lambda X\to S^\mu X, ~\psi: S^\lambda X \to S^\lambda X \quad \mathrm{and} \quad \omega = \sigma\circ \psi\colon S^\lambda X\to S^\mu X\]
 denote the symmetrization map, a choice of $(\lambda,\mu)$-separated map and the corresponding factorization map, respectively.  

\begin{theorem}\label{thm-biadjoint}
The functor $\sigma_* \colon \Perv_\cS (S^\lambda X, \kk) \to \Perv_\cS (S^\mu X,\kk)$ is biadjoint to the functor $\omega^* \colon  \Perv_\cS (S^\mu X,\kk) \to \Perv_\cS (S^\lambda X, \kk)$.  For $\cF \in \Perv_\cS(S^\mu X, \kk)$ and $\cG \in \Perv_\cS(S^\lambda X,\kk)$, the adjunction $(\omega^*, \sigma_*)$ is given by
\[\Hom(\cF,\sigma_*\cG) \stackrel{\sim}{\longrightarrow} \Hom(\sigma^*\cF, \cG) \stackrel{\psi^*}{\longrightarrow} \Hom(\psi^*\sigma^* \cF, \psi^* \cG) \stackrel{\sim}{\longrightarrow} \Hom(\omega^*\cF, \cG),\]
where the first arrow is the standard adjunction $(\sigma^*, \sigma_*)$, the second is composition with $\psi^*$, and the third comes from $\psi^*\sigma^* = \omega^*$ and Lemma \ref{lem:psi isom id}.
\end{theorem}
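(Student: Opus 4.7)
My plan is to verify that each of the three arrows in the displayed chain is an isomorphism, thereby establishing the $(\omega^*, \sigma_*)$ adjunction with its stated explicit form; the biadjoint direction $(\sigma_*, \omega^*)$ will then follow from Verdier duality.

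The outer two arrows are essentially formal. The first, $\Hom(\cF, \sigma_*\cG) \stackrel{\sim}{\longrightarrow} \Hom(\sigma^*\cF, \cG)$, is the standard $(\sigma^*, \sigma_*)$ derived-category adjunction, valid for any $\cF$ and $\cG$ with no constructibility assumption. For the third arrow, the identity $\psi^*\sigma^* = (\sigma\psi)^* = \omega^*$ is functoriality of pullback, and the canonical isomorphism $\psi^*\cG \cong \cG$ is provided by Lemma~\ref{lem:psi isom id} applied to the $\lambda$-adapted map $\psi$ and the $\cS$-constructible object $\cG$.

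The main content is showing that the middle arrow
\[\psi^*\colon \Hom(\sigma^*\cF, \cG) \longrightarrow \Hom(\psi^*\sigma^*\cF, \psi^*\cG)\]
is a bijection. The subtlety is that $\sigma^*\cF$ is not $\cS$-constructible on $S^\lambda X$ (only constructible with respect to the finer stratification $\sigma^{-1}(\cS^\mu)$), so Lemma~\ref{lem:psi isom id} does not directly apply on the source side. My plan is to extend the parametrized homotopy argument of Lemma~\ref{lem:omega pullback}: consider the universal factorization
\[\Omega\colon S^\lambda X \times \Emb_\mu(X,\lambda) \to S^\mu X,\quad (x,\phi) \mapsto \sigma(\phi_\lambda(x)),\]
together with the projection $p\colon S^\lambda X \times \Emb_\mu(X,\lambda) \to S^\lambda X$. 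Both $\Omega^*\cF$ and $p^*\cG$ should be constructible with respect to a suitable product stratification, so the sheaf of local $\Hom$ groups between them has locally constant cohomology on $\Emb_\mu(X,\lambda)$. Since this base is simply connected by Lemma~\ref{lem:Emb is simply connected}, the fibers $\Hom(\omega_\phi^* \cF, \cG)$ over different parameters $\phi$ are canonically isomorphic. To relate these parametrized fibers to $\Hom(\sigma^*\cF, \cG)$ on the left, I would pass through the standard adjunction $\Hom(\sigma^*\cF, \cG) = \Hom(\cF, \sigma_*\cG)$ and then compare $\sigma_*\cG$ with the pushforward of $\cG$ along any factorization map via a stratified local analysis on the $\cS^\mu$-strata meeting the image of the factorization.

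For the biadjoint direction $(\sigma_*, \omega^*)$, I would apply Verdier duality: $\Perv_\cS$ is stable under duality $\D$; $\sigma_*$ commutes with $\D$ because $\sigma$ is proper (so $\sigma_! = \sigma_*$); and $\omega^* = \omega^!$ commutes with $\D$ because $\omega$ is an open immersion. Applying $\D$ then transports the $(\omega^*, \sigma_*)$ adjunction into the $(\sigma_*, \omega^*)$ adjunction on $\cS$-constructible perverse sheaves. The main obstacle is the rigorous verification of the parametrized homotopy step, given that $\sigma^*\cF$ fails to be $\cS$-constructible: the correct product stratification on $S^\lambda X \times \Emb_\mu(X,\lambda)$ and the constructibility of $\Omega^*\cF$ with respect to it require careful setup, and the restriction to $\Emb_\mu(X,\lambda)$ rather than the full $\Emb(X,\lambda)$ seems to be essential here.
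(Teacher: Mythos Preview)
Your outline correctly isolates the middle arrow as the only nontrivial step, and your duality argument for the other adjunction direction matches the paper's. However, the parametrized homotopy you propose has a genuine gap that you yourself flag but do not close. Running the stratified homotopy over $\Emb_\mu(X,\lambda)$ only re-proves that different $(\lambda,\mu)$-separated maps yield canonically isomorphic Hom-spaces (essentially Lemma~\ref{lem:omega pullback}); it does not connect any of them to $\Hom(\sigma^*\cF,\cG)$, because the identity lies in $\Emb(X,\lambda)\setminus\Emb_\mu(X,\lambda)$. Over the larger space $\Emb(X,\lambda)$ the universal pullback $\Omega^*\cF$ is \emph{not} constructible for the product stratification $\cS\times\Emb(X,\lambda)$ --- its fiber at the identity is $\sigma^*\cF$, which is only $F\cS$-constructible --- so Lemma~\ref{lem:stratified homotopy} cannot be applied. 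Your suggested workaround (``compare $\sigma_*\cG$ with the pushforward of $\cG$ along any factorization map via a stratified local analysis'') is not an argument.

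The paper avoids homotoping through the non-separated locus altogether. It introduces the open set $j\colon U\hookrightarrow S^\lambda X$ where points from distinct $\lambda$-parts do not collide, and proves (Proposition~\ref{prop-pervext}) that for any $(\cS\cap U)$-constructible perverse sheaf on $U$ the three perverse extensions ${}^p j_!$, $j_{!*}$, ${}^p j_*$ coincide and are $\cS$-constructible, and moreover that $\cG\to{}^p j_* j^*\cG$ is an isomorphism for any $\cS$-constructible $\cG$. The geometric input is that the complement $S^\lambda X\setminus U$ meets each $\cS$-stratum in complex codimension at least~$2$; locally one uses a factorization map and the translation action to write everything as an external product and reduce to the standard fact that perverse extensions across a codimension-$2$ subset of the smallest stratum agree. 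Granting this, one gets
\[
\Hom(\sigma^*\cF,\cG)\cong\Hom(\sigma^*\cF,{}^p j_* j^!\cG)\cong\Hom({}^p j_! j^*\sigma^*\cF,\cG),
\]
and since the image of $\psi$ lies in $U$, one has ${}^p j_! j^*\sigma^*\cF\cong\psi^*\sigma^*\cF=\omega^*\cF$ via Lemma~\ref{lem:psi isom id}. This codimension-$2$ extension result is the key idea missing from your proposal.
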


We can rewrite the formula for the adjunction to give a formula for the counit:
\begin{corollary}\label{cor:counit formula}
	The counit of the adjunction $(\omega^*, \sigma_*)$ is the composition
	\[\omega^*\sigma_* = \psi^*\sigma^*\sigma_* \to \psi^* \to 1_{\Perv_\cS(S^\lambda X,\kk)}\]
	of $\psi^*$ applied to the counit of $(\sigma^*,\sigma_*)$ and the isomorphism of Lemma \ref{lem:psi isom id}.
\end{corollary}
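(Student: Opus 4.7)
The plan is to derive the counit formula directly from the adjunction bijection given in Theorem~\ref{thm-biadjoint} by specializing it and tracking where the identity morphism goes. Recall that for any adjunction $(L,R)$, the counit $\varepsilon_\cG\colon LR(\cG) \to \cG$ at an object $\cG$ is characterized as the morphism corresponding to $\id_{R(\cG)}$ under the adjunction isomorphism $\Hom(R\cG,R\cG) \cong \Hom(LR\cG,\cG)$. Thus to compute the counit of $(\omega^*,\sigma_*)$ at $\cG \in \Perv_\cS(S^\lambda X,\kk)$, I would set $\cF = \sigma_*\cG$ in the chain of isomorphisms displayed in Theorem~\ref{thm-biadjoint} and trace $\id_{\sigma_*\cG}$ through each step.

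In detail, start with $\id_{\sigma_*\cG} \in \Hom(\sigma_*\cG,\sigma_*\cG)$. Under the first arrow, which is the standard $(\sigma^*,\sigma_*)$-adjunction, the identity is sent precisely to the counit morphism $\eta_\cG \colon \sigma^*\sigma_*\cG \to \cG$ of that adjunction. Applying the second arrow (composition with $\psi^*$) yields $\psi^*(\eta_\cG) \colon \psi^*\sigma^*\sigma_*\cG \to \psi^*\cG$. Finally, the third arrow identifies $\psi^*\sigma^* = \omega^*$ on the source and uses the canonical isomorphism $\psi^*\cG \simeq \cG$ supplied by Lemma~\ref{lem:psi isom id} on the target. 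Assembling these, the image of $\id_{\sigma_*\cG}$ is exactly the composition
\[\omega^*\sigma_*\cG \;=\; \psi^*\sigma^*\sigma_*\cG \;\xrightarrow{\psi^*(\eta_\cG)}\; \psi^*\cG \;\xrightarrow{\sim}\; \cG,\]
which is the formula claimed in the statement.

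Naturality in $\cG$ is automatic since each of the three constituent arrows of the adjunction isomorphism is natural in $\cG$: the first is the natural $(\sigma^*,\sigma_*)$-adjunction, the second is whiskering by the fixed functor $\psi^*$, and the third is built from the canonical isomorphism of Lemma~\ref{lem:psi isom id} which is natural by construction. Hence the displayed composition defines a natural transformation $\omega^*\sigma_* \to 1$, which agrees with the counit object-wise, so they coincide as natural transformations. There is essentially no obstacle here beyond bookkeeping; the content of the corollary is just making explicit which formula Theorem~\ref{thm-biadjoint} forces on the counit.
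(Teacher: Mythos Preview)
Your proof is correct and is exactly the approach the paper intends: the corollary is stated as an immediate rewriting of the adjunction formula of Theorem~\ref{thm-biadjoint}, and you have carried out the standard computation of the counit by setting $\cF = \sigma_*\cG$ and tracing $\id_{\sigma_*\cG}$ through the displayed chain of isomorphisms. The only cosmetic point is that you denote the counit of $(\sigma^*,\sigma_*)$ by $\eta_\cG$, which is more commonly used for units; the argument is unaffected.
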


As $\sigma$ is proper and $\omega$ is an open inclusion, we have $\sigma_*= \sigma_!$ and $\omega^*=\omega^!$.  Thus by duality it suffices to show that $\omega^*$ is left adjoint to $\sigma_*$ as functors between $\Perv_\cS(S^\lambda X, \kk)$ and $\Perv_\cS(S^\mu X,\kk)$.

There is of course an adjunction $(\sigma^*,\sigma_*)$ of functors between the triangulated categories $D^b(S^\lambda X,\kk)$ and $D^b(S^\mu X,\kk)$, but as noted above $\sigma^*$ does not preserve either perversity or $\cS$-constructibility in general.  
To prove Theorem~\ref{thm-biadjoint} it suffices to show the following proposition.

\begin{proposition}\label{prop-hominto}
For any  $\cF \in \Perv_\cS (S^\mu X,\kk)$ and $\cG \in \Perv_\cS (S^\lambda X,\kk)$ there is a natural isomorphism
\[ \Hom_{D^b(S^\lambda X)}(\sigma^*\cF,\cG) \cong \Hom_{\Perv_{\cS}(S^\lambda X)}(\omega^* \cF, \cG)\]
which is induced by applying $\psi^*$ and using the isomorphism of Lemma \ref{lem:psi isom id}.
\end{proposition}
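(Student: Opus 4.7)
The plan is to factor $\psi = j_W \circ \bar\psi$, where $\bar\psi\colon S^\lambda X \to W$ is the diffeomorphism onto its image $W = \psi(S^\lambda X)$ and $j_W\colon W\hookrightarrow S^\lambda X$ is the open inclusion. Since $\psi$ is $(\lambda,\mu)$-separated, on $W$ the two stratifications $\FS$ and $\cS$ coincide, and the restriction $\sigma|_W = \sigma \circ j_W\colon W \to S^\mu X$ is an open embedding (this is what makes $\omega = \sigma|_W \circ \bar\psi$ an open embedding). Using Lemma~\ref{lem:psi isom id}, one has a canonical isomorphism $\bar\psi_*\cG \cong j_W^*\cG$, and hence $\psi_*\cG \cong (j_W)_*j_W^*\cG$.

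I will rewrite both Hom spaces via the adjunction $(\sigma^*,\sigma_*)$:
\[
\Hom_{\Perv_\FS(S^\lambda X)}(\sigma^*\cF,\cG) \cong \Hom_{D^b(S^\mu X)}(\cF,\sigma_*\cG),\]
\[
\Hom_{\Perv_\cS(S^\lambda X)}(\omega^*\cF,\cG) \cong \Hom_{D^b(S^\mu X)}(\cF,\omega_*\cG),
\]
where $\omega_*\cG = \sigma_*\psi_*\cG \cong \sigma_*(j_W)_*j_W^*\cG$. Under these identifications, the natural map $\psi^*$ of the proposition becomes the map induced by the $\sigma_*$-pushforward of the adjunction unit $\cG \to (j_W)_*j_W^*\cG$.

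The task then reduces to showing this induced map is a bijection on $\Hom$, or equivalently that the cone $\sigma_*\cC$ of $\sigma_*\cG\to\sigma_*(j_W)_*j_W^*\cG$ has vanishing $\Hom_{D^b(S^\mu X)}(\cF,\sigma_*\cC[k])$ for $k=-1,0$. By adjunction, this is the same as showing
\[
R\Hom_{D^b(S^\lambda X)}(\sigma^*\cF,\cC) = 0 \quad\text{in degrees }-1\text{ and }0,
\]
where $\cC$ is supported on the closed complement $Z = S^\lambda X \setminus W$.

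The main obstacle is this final vanishing. I plan to approach it via a stratified homotopy argument in the spirit of Lemmas \ref{lem:psi isom id} and \ref{lem:omega pullback}: one considers the universal family of factorization maps parametrized by $\Emb_\mu(X,\lambda)$ and deforms $\psi$ within this family. As $\psi$ varies continuously, the corresponding open set $W$ and cone $\cC$ vary, but the Hom space in question should be locally constant in $\psi$; since $\Emb_\mu(X,\lambda)$ is simply connected, and one can shrink $\psi$ toward a degenerate limit where $W$ approaches $S^\lambda X$ and $\cC$ becomes zero, the vanishing for all $\psi$ will follow. Making this precise requires checking that the cone $\cC_\psi$ defines a constructible family over $\Emb_\mu(X,\lambda)$ in the sense of Section~\ref{sec:stratification preliminaries}; this is the technical heart of the argument and is where I expect most of the work to lie.
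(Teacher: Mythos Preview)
Your reduction is sound up to the point where you need the vanishing of $R\Hom(\sigma^*\cF,\cC)$ in degrees $-1,0$, but the deformation argument you propose to establish that vanishing does not work. The ``degenerate limit where $W$ approaches $S^\lambda X$'' would require $\psi$ to approach the identity map, and the identity is \emph{not} $(\lambda,\mu)$-separated whenever $\lambda < \mu$: the bubbles attached to distinct $\lambda$-classes within the same $\mu$-class must have disjoint closures, so they can never fill all of $X$. Thus the identity lies outside $\Emb_\mu(X,\lambda)$, and there is no point in that parameter space at which $\cC$ vanishes. Local constancy over a simply connected base tells you nothing about a limit that is not in the base; the Hom space could perfectly well be a nonzero constant. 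Moreover, even formulating ``$\cC_\psi$ defines a constructible family over $\Emb_\mu(X,\lambda)$'' is delicate, since the support $Z = S^\lambda X \setminus W$ is not a union of $\cS$- or $F\cS$-strata and moves with $\psi$.

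The paper avoids this by replacing your variable open set $W$ with the fixed open set $U \subset S^\lambda X$ on which $\sigma$ is unramified (the locus where configurations from different $\lambda$-parts do not collide). One always has $W \subset U$, and the complement $S^\lambda X \setminus U$ meets each $\cS$-stratum in complex codimension at least $2$. This codimension bound is the real engine: it forces the three perverse extensions ${}^pj_!$, $j_{!*}$, ${}^pj_*$ from $U$ to coincide and be $\cS$-constructible (Proposition~\ref{prop-pervext}). In particular $\cG \cong {}^pj_*j^*\cG$, so
\[
\Hom(\sigma^*\cF,\cG)\;\cong\;\Hom({}^pj_!j^*\sigma^*\cF,\cG),
\]
and since ${}^pj_!j^*\sigma^*\cF$ is now $\cS$-constructible, Lemma~\ref{lem:psi isom id} identifies it with $\psi^*\sigma^*\cF=\omega^*\cF$. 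The point is that the codimension-$2$ estimate for $S^\lambda X\setminus U$ is what makes the cone argument go through; no analogous estimate is available for your $Z = S^\lambda X\setminus W$, which is why the limiting strategy stalls.
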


The proof of Proposition \ref{prop-hominto} will be based on the following results. Let $j\colon U \hookrightarrow S^\lambda X$ denote the inclusion of the open locus $U$ of points $(x_1,\dots,x_r) \in S^\lambda X = S^{|\lambda_1|} X \times \dots \times S^{|\lambda_r|} X$ such that $x_i \cap x_j = \emptyset$ for all $i\neq j$.  

\begin{lemma}\label{lem-openset}
	Let $Z = S^\lambda X \setminus U$.  Then
	\begin{enumerate}
		\item For any stratum $S^\lambda_\nu X \subset S^\lambda X$, the intersection $S^\lambda_\nu X \cap Z$ has codimension at least two in $S^\lambda_\nu X$.
		\item For any $\nu \le \lambda$, we have
		$\sigma(S^\lambda_\nu X \cap U) \subset S^\mu_\nu X.$
		\item The restriction $\sigma|_U \colon U \to S^\mu X$ is a local homeomorphism.
	\end{enumerate}
	Furthermore, for any $\cF\in \Perv_\cS(S^\mu X, \kk)$, we have 
	$j^*\sigma^*\cF \in \Perv_{\cS\cap U}(U,\kk)$.
\end{lemma}

\begin{proposition}\label{prop-pervext} 
Let $Y$ be a variety endowed with a stratification $\cS$, and let $U$ be an open subset of $Y$ with complement $Z$.  If for every stratum $S \in \cS$, $S\cap Z$ has codimension at least two in $S$, then for any perverse sheaf
$\cF \in \Perv_{\cS\cap U}(U,\kk)$, the perverse extensions
\[ {}^p\! j_! \cF,\; j_{!*} \cF,\; {}^p\! j_* \cF \in \Perv(Y,\kk)\]
along the inclusion $j\colon U \to Y$
are $\cS$-constructible, and the natural maps ${}^p\! j_! \cF\to j_{!*} \cF\to {}^p\! j_* \cF$ are isomorphisms.  Moreover, for any $\cG \in \Perv_\cS(Y, \kk)$, the adjunction map $\cG \to {}^p\!j_*j^*\cG$ is an isomorphism.  
\end{proposition}

\begin{proof}[Proof of Proposition \ref{prop-hominto} assuming Lemma \ref{lem-openset} and Proposition  \ref{prop-pervext}]
 Given $\cF \in \Perv_\cS (S^\mu X,\kk)$ and $\cG \in \Perv_\cS (S^\lambda X,\kk)$, Proposition \ref{prop-pervext} gives an isomorphism $\cG \cong {}^p\!j_*j^*\cG = {}^p\!j_*j^!\cG = {}^p\tau_{\le 0}j_*j^!\cG$, using the fact that $j_*j^!\cG$ is in $D^{\ge 0}(S^\lambda X,\kk)$.
Also note that $\sigma^*\cF$ is in ${}^p D^{\le 0}(S^\lambda X,\kk)$, because $\sigma$ is a finite map.  Then we have natural isomorphisms
\begin{equation}\label{eqn:j adjunction}
\begin{split}
	\Hom(\sigma^*\cF,\cG)  & \cong \Hom(\sigma^*\cF, {}^p\tau_{\le 0}j_*j^!\cG) \\ & \cong \Hom(\sigma^*\cF,j_*j^!\cG) \\ & \cong \Hom({}^p\!j_!j^*\sigma^*\cF, \cG), 
\end{split}	
\end{equation}
where the second isomorphism is the adjunction between ${}^p\tau_{\le 0}$ and the inclusion ${}^p D^{\le 0}(S^\lambda X,\kk) \subset D^b(S^\lambda X,\kk)$, and the third isomorphism is from the adjunctions $(j^*, j_*)$ and $({}^p\!j_!, j^!)$. (Since the latter adjunction holds on categories of perverse sheaves, we use the fact that $j^*\sigma^*\cF$ is perverse, by Lemma \ref{lem-openset}.)

Another application of Proposition \ref{prop-pervext} shows that ${}^p\!j_!j^*\sigma^*\cF$ is $\cS$-constructible, so by Lemma \ref{lem:psi isom id} we have a natural isomorphism
\begin{equation}\label{eqn:independence}
{}^p\!j_!j^*\sigma^*\cF \cong \psi^*\,{}^p\!j_!j^*\sigma^*\cF \cong \psi^*\sigma^*\cF = \omega^*\cF,
\end{equation}
where the middle isomorphism holds because the image of $\psi$ is contained in $U$.

This establishes the isomorphism of Proposition \ref{prop-hominto}. The fact that the isomorphism is given by applying $\psi^*$ and using Lemma \ref{lem:psi isom id} follows because if we apply $\psi^*$ to each term of \eqref{eqn:j adjunction}, each term is naturally isomorphic to $\Hom(\omega^*\cF, \cG)$, and the isomorphisms become the identity map.
\end{proof}

Before proving Lemma \ref{lem-openset} and Proposition~\ref{prop-pervext} we pause to record the following consequence of the previous proof:
\begin{corollary}\label{cor:unit}
There is a natural isomorphism of functors ${}^p\!j_!j^*\sigma^* \cong \omega^*$, and the unit for the adjunction $(\omega^*,\sigma_*)$ described in Theorem~\ref{thm-biadjoint} can be expressed as
\[ 1_{\Perv_\cS(S^n X)} \to \sigma_* \sigma^* \to \sigma_* {}^p\! j_* j^*  \sigma^* \cong \sigma_*\omega^*,\]
where the first two maps are the units for the adjunctions $(\sigma^*,\sigma_*)$ and $(j^*,\,^{p}\!j_*)$.
\end{corollary}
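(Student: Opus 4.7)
The first assertion is already established by equation \eqref{eqn:independence} in the proof of Proposition~\ref{prop-hominto}: it produces, for every $\cF \in \Perv_\cS(S^\mu X, \kk)$, a natural isomorphism $\alpha_\cF \colon {}^p j_! j^* \sigma^* \cF \xrightarrow{\sim} \omega^* \cF$. To make sense of the final isomorphism in the displayed composition, I would observe that $\sigma^* \cF \in \Perv_{F\cS}(S^\lambda X)$ and that $F\cS$ agrees with $\cS$ on $U$, so $j^* \sigma^* \cF$ is $(\cS \cap U)$-constructible perverse; Proposition~\ref{prop-pervext} then identifies ${}^p j_! j^* \sigma^* \cF$ canonically with ${}^p j_* j^* \sigma^* \cF$, and $\alpha_\cF$ provides the desired canonical isomorphism ${}^p j_* j^* \sigma^* \cF \cong \omega^* \cF$.

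For the explicit formula for the unit $\eta_\cF \colon \cF \to \sigma_* \omega^* \cF$, I would trace $\id_{\omega^* \cF}$ backward through the chain of adjunction isomorphisms used in the proof of Proposition~\ref{prop-hominto}. Under precomposition with $\alpha_\cF^{-1}$, the identity corresponds to $\alpha_\cF \colon {}^p j_! j^* \sigma^* \cF \to \omega^* \cF$. Under the paired $({}^p j_!, j^*)$- and $(j^*, {}^p j_*)$-adjunctions used in \eqref{eqn:j adjunction}, this transforms into the composition $\sigma^* \cF \to {}^p j_* j^* \sigma^* \cF \cong \omega^* \cF$, in which the first arrow is the unit of $(j^*, {}^p j_*)$ and the isomorphism is the one from the first paragraph; here one uses that for an open embedding $j$ the functor $j^* {}^p j_!$ is canonically the identity, so transposing $\alpha_\cF$ under $({}^p j_!, j^*)$ simply restricts it to $U$. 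Finally, the standard $(\sigma^*, \sigma_*)$-adjunction sends this morphism to the composition
\[\cF \to \sigma_* \sigma^* \cF \to \sigma_* {}^p j_* j^* \sigma^* \cF \cong \sigma_* \omega^* \cF,\]
which is precisely the expression in the statement.

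There is no substantive obstacle; the corollary is a formal bookkeeping consequence of the proof of Proposition~\ref{prop-hominto}. The only care needed is to keep straight which unit/counit is being applied at each stage and to invoke Proposition~\ref{prop-pervext} in order to pass freely between ${}^p j_!$ and ${}^p j_*$ on $(\cS \cap U)$-constructible perverse input.
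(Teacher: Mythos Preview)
Your proposal is correct and follows exactly the approach the paper intends: the paper does not give a separate proof but simply records the corollary as a consequence of the proof of Proposition~\ref{prop-hominto}, and you have faithfully unpacked that consequence by tracing the identity of $\omega^*\cF$ backward through \eqref{eqn:independence}, \eqref{eqn:j adjunction}, and the $(\sigma^*,\sigma_*)$-adjunction. Your care in invoking Proposition~\ref{prop-pervext} to pass between ${}^p j_!$ and ${}^p j_*$ is exactly the point, and there is nothing to add.
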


\begin{proof}[Proof of Lemma \ref{lem-openset}]
	Let $\tau \colon X^n \to S^\lambda X$ be the quotient map.  We can express membership in the open set $U$ in terms of stabilizers of the $\fS_n$-action: for a point $p\in X^n$, we have
	\[\tau(p) \in U \iff (\fS_n)_p \subset \fS_\lambda,\]
	which implies that $(\fS_\lambda)_p = (\fS_n)_p = (\fS_\mu)_p$ for any $p\in \tau^{-1}(U)$.
	We can also express the strata in terms of stabilizers: if $\nu \le \lambda$, we have
	\[\tau(p) \in S^\lambda_\nu X \iff (\fS_\lambda)_p = \fS_{w\nu}\; \mbox{for some}\; w \in \fS_\lambda.\]
	Putting this together, if $\tau(p) \in S^\lambda_\nu X \cap U$, it follows that
	$(\fS_\mu)_p = (\fS_{w\nu})_p$ for some $w \in \fS_\lambda \subset \fS_\mu$, and so $\sigma(\tau(p)) \in S^\mu_\nu X$.  This proves the statement (2).
	
	To show (1) it is enough to show that $\tau^{-1}(S^\lambda_\nu X \cap Z)$ has codimension at least two in $\tau^{-1}(S^\lambda_\nu X)$.  But $\tau^{-1}(S^\lambda_\nu X)$ is the union over $w\in \fS_\lambda$ of the locus whose $\fS_\lambda$-stabilizer is $\fS_{w\nu}$, which is Zariski open in the linear subspace $X^n_{w\nu} \subset X^n$ cut out by equations $p_i = p_j$ for all $i \sim_{w\nu} j$.  This linear subspace is clearly isomorphic to $X^{\ell(\nu)}$.  On the other hand, $\tau^{-1}(Z)$ is the union over all $i \not\sim_\lambda j$ of the linear space $X^n_{ij}$ with equation $p_i = p_j$.  But none of the $X^n_{ij}$ are contained in any $X^n_{w\nu}$, 
	since $w\nu \le \lambda$ implies that $i \sim_{w\nu} j \implies i \sim_\lambda j$.  And since $\dim X = 2$, it follows that each $X^n_{ij}$ meets $\tau^{-1}(S^\lambda_\nu Z)$ in codimension two.  The statement (1) follows.
	
	To prove (3), take any $p\in \tau^{-1}(U)$.  Then, letting $B = B_\epsilon(p)$ denote the open ball of radius $\epsilon$ in the standard round metric $d$ on $X^n$, we can choose $\epsilon>0$ so that $B\subset \tau^{-1}(U)$ and $d(p,wp) > 2\epsilon$ for any $w\in \fS_\mu$ such that $p\ne wp$. Then $B$ is invariant under $G = (\fS_\mu)_p = (\fS_\lambda)_p$, and every point of $B$ has $\fS_\mu$-stabilizer contained in $G$, since if $q \in B$ and $w \in \fS_\mu \setminus G$, then $wq$ is in $wB = B_\epsilon(wp)$, which is disjoint from $B$.  	As a result we have $\tau(B) \cong B/G \cong \sigma(\tau(B))$, i.e.\ $\tau(B)$ is a neighborhood of $\tau(p)$ which projects homeomorphically onto the neighborhood $\sigma(\tau(B))$ of $\sigma(\tau(p))$.  Thus $\sigma|_U$ is a local homeomorphism, as desired.
	
	Finally, for $\cF\in \Perv_\cS(S^\mu X, \kk)$, the property (3) implies that $j^*\sigma^*\cF= (\sigma^*\cF)|_U$ is perverse, and property (2) implies that it is $(\cS\cap U)$-constructible.
\end{proof}

\begin{proof}[Proof of Proposition \ref{prop-pervext}]
We choose a total order $\leq$ on the set of $\cS$-strata of $Y$ refining the closure relation.  We will use this total order to prove the claim inductively.

For any stratum $Q \in \cS$, let
\[ U_Q = U \cup \bigsqcup_{\substack{P \in \cS\\P>Q}} P, \]
In particular $U_Q$ is an open set containing $U$, and we let $$j_Q: U_Q \hookrightarrow U_Q \cup Q$$ denote the open inclusion.

We will now show 
\begin{claim} For any $Q \in \cS$ and any perverse sheaf $\cG \in \Perv_{\sS \cap U_Q}(U_Q,\kk)$, the perverse extensions
\[ {}^p (j_Q)_! \cG , ~(j_Q)_{!*} \cG ,~ {}^p (j_Q)_* \cG \in \Perv(U_Q \cup Q,\kk)\]
are $\cS \cap (U_Q \cup Q)$-constructible and the natural maps
\[ {}^p (j_Q)_! \cG \to (j_Q)_{!*} \cG \to {}^p (j_Q)_* \cG\]
are isomorphisms.  
\end{claim}
Our proposition will follow from this claim, as we can express each of the extensions ${}^p j_! \cG, j_{!*} \cG$ and ${}^p j_* \cG$ as iterated extensions across one stratum at a time.  For example:
\[ j_{!*} \cG = (j_{Q_N})_{!*}(j_{Q_{N-1}})_{!*} \ldots (j_{Q_1})_{!*} \cG,\]
where $Q_1 > Q_2 > \ldots >Q_{N-1} > Q_N$ is a total order that refines the closure partial order on the set of all strata.

The isomorphisms and constructibility of the claim are local statements, so it is enough to verify them in a neighborhood of any point $q \in (U_Q\cup Q)\setminus U_Q = Q\cap Z$.  We can find a neighborhood $W$ of $q$, an analytic variety $N$, a stratification 
$\cS_N = \{N_T \mid T \ge Q\}$ of $N$ and a stratification-preserving homeomorphism $\eta\colon  N \times V\to W$, where $V = W \cap Q$ and $N \times V$ has the product stratification 
$\cS_N \times V.$
 Shrinking $W$ if necessary, we can assume that $V$ is homeomorphic to an open ball.
 Since $\eta^{-1}(Q) = N_Q \times V$ and $N_Q = \{o\}$ is a single point, we have $\eta^{-1}(Z\cap Q) = \{o\}\times Z'$ where $Z' = Z \cap V$.  Then the pullback
 $\cG' = \eta^*\cG$ is a perverse sheaf on $U' = (N \times V) \setminus (\{o\}\times Z')$, constructible with respect to the restriction of $\cS_N \times V$ to $U'$.
 
 Because $Z\cap Q$ has codimension at least $2$ in $Q$, it follows that $V \setminus Z' \cong V\cap U$ is simply connected.  (To see this, one can use \cite{Loj} to obtain a triangulation of $Q$ such that $Z\cap Q$ is a subcomplex of real codimension at least $4$.  Then nullhomotopies in $Q$ can be perturbed to be transverse to $Z$.)
 By Lemma \ref{lem:stratified pullback}, we have an isomorphism
 $$\cG'|_{N \times (V\setminus Z')} \cong \cH\boxtimes\cL_0 ,$$
 where $\cH$ is an $\cS_N$-constructible perverse sheaf on $N$ and $\cL_0= \underline{\kk}_{V\setminus Z_0}[2\dim_\C Q]$ is a constant local system.  Another application of Lemma \ref{lem:stratified pullback} gives an isomorphism  
 $$\cG'|_{(N\setminus \{o\})\times V} \cong \cH_0\boxtimes \cL,$$
 where $\cH_0 \in \Perv_{\cS_N}(N\setminus \{o\},\kk)$ and $\cL = \underline{\kk}_V[2\dim_\C Q]$.  By restricting to $ N \times \{p\}$ and $(N\setminus \{o\}) \times \{p\}$ for some point $p\in V$, we see that $\cH_0 \cong \cH|_{N\setminus\{o\}}$, and so gluing these perverse sheaves gives an isomorphism
 \[\cG'|_{U'} \cong (\cH\boxtimes\cL)|_{U'}.\]

Let $i'\colon \{o\}\times Z' \to N \times V$ and $j' \colon U' \to  N \times V $ be the inclusions.  Then $i'$ factors as
\[\{o\}\times Z' \stackrel{i_1}{\longrightarrow} \{o\}\times V \stackrel{i_2}{\longrightarrow} N\times V.\]
Since $\cH\boxtimes\cL$ is perverse, we have $(i_2)^*(\cH\boxtimes\cL) \in {}^pD^{\le 0}(\{o\}\times V, \kk)$.
Then since $(i_2)^*(\cH\boxtimes\cL)$ has locally constant cohomology sheaves and $Z'$ has codimension at least two in $V$, we have 
\[(i')^*(\cH\boxtimes\cL) = (i_1)^*(i_2)^*(\cH\boxtimes\cL) \in {}^pD^{\le -2}(\{o\}\times Z',\kk).\]
Similarly, and dually, we have $(i')^!(\cH\boxtimes\cL) \in {}^pD^{\ge 2}(\{o\}\times Z',\kk)$.  Then \cite[Corollaire 1.4.24 and following]{BBD} implies that $\cH\boxtimes\cL$ is isomorphic to ${}^pj'_!(\cG'|_{U'})$, ${}^pj'_{!*}(\cG'|_{U'})$, and ${}^pj'_*(\cG'|_{U'})$.  Thus these sheaves are all isomorphic to each other, as desired.  In addition, they are $\cS$-constructible, since $\cH\boxtimes\cL$ is.  This completes the proof of the claim, and so the proposition follows.
\end{proof}

%%%%%
\section{The Hilbert-Chow sheaves and hyperbolic restriction} \label{sec:HCsheaves}
%%%%%

The $n$-fold symmetric product of the plane has a natural resolution  $$ \rho_n: \Hilb^n\! X \to S^n X,$$
where $\Hilb^n X$ denotes the Hilbert scheme of $n$ points in the plane, whose points are zero-dimensional subschemes $Z \subset X$ of length $n$.  The map $\rho_n$ is a symplectic resolution of singularities and sends $Z \in \Hilb^n X$ to the cycle
$$ \sum_{x\in X} \dim_\C (\cO_{Z,x}) \cdot x \in S^n X.$$
We may equivalently view $\Hilb^n X$ as parametrizing the ideals $I_Z \subset \cO(X) = \C[x,y]$ such that $\dim_\C \C[x,y]/I_Z = n$.

More generally, for a partition $\lambda \in \Part{[n]}$, we have a resolution 
$\rho_\lambda\colon \Hilb^\lambda\! X \to S^\lambda X$, where 
\[\Hilb^\lambda\! X = \Hilb^{\hat\lambda_1}\! X \times \dots \times \Hilb^{\hat\lambda_r}\! X.\]

It is well-known that the maps $\rho_\lambda$ are semi-small and $\cS$-stratified, in the sense that over each stratum $S$ the map $\rho^{-1}(S) \to S$ is a fiber bundle.  This can be seen by direct computation of the fibers \cite{Bri77,I77}, or by using the results of Kaledin \cite{Kal06}, who showed that these properties hold for any symplectic resolution (where the stratification of the target is the stratification by symplectic leaves).
Thus, pushing forward the (shifted) constant sheaf from the resolution gives an $\cS$-constructible perverse sheaf.

\begin{definition}
	\begin{enumerate}
		\item The Hilbert-Chow sheaf $\cF_n$ is the perverse sheaf $$ \cF_n := (\rho_n)_* \uk_{\Hilb^n\!X}[2n] \in \Perv_\cS(S^n X,\kk).$$
		\item For any partition $\lambda \in \Part{[n]}$, define
		\[\cB_\lambda := (\rho_\lambda)_*\uk_{\Hilb^\lambda\! X}[2n] \in \Perv_\cS(S^\lambda X,\kk),\] 
		and for any partition $\mu$ such that $\lambda \leq \mu$, let
		$$ \cF^\mu_\lambda := (\sigma^\mu_\lambda)_* \cB_\lambda \in \Perv_\cS(S^\mu X,\kk).$$  When $\mu=(n)$, we simplify notation and denote $\cF^{(n)}_\lambda$ by $\cF_\lambda$.
	\end{enumerate}
\end{definition}

\begin{example}
	Note that if $\lambda = (n)$, then $\cF_{(n)}$ is just the Hilbert-Chow sheaf $\cF_n$.  At the other extreme, if $\lambda = (1^n)$, then $\cF_{(1^n)} = (\sigma_{(1^n)})_* \uk_{X^n} [2n]$.  Since the map $\sigma_{(1^n)}$ is small, this is the intersection cohomology complex of a local system on the open stratum $S^n_{(1^n)}X$ whose monodromy is the regular representation of $\pi_1(S^n_{(1^n)}X) \cong \fS_n$.
\end{example}

%%%%%
\subsection{Hyperbolic restriction functors} \label{sec:hyperbolic restriction}
%%%%%

Let $\C^*$ act on $X=\C^2$ by $$t\cdot(x,y) = (tx,t^{-1}y).$$  Note that the origin $0 \in X$ is the unique fixed point of this action.  Let $\Sigma \subset X$ be the attracting locus (i.e., the $x$-axis of $\C^2$).

Fix a partition $\mu \in \Part{[n]}$ and consider the induced $\C^*$-action on $S^\mu X$.  The unique fixed point for this action is $n\cdot 0 \in S^\mu X$, the origin with multiplicity $n$,  and the attracting locus is $S^\mu \Sigma \subset S^\mu X$.

Let $f\colon \{n \cdot 0\} \hookrightarrow S^\mu \Sigma$ and $g\colon S^\mu \Sigma \hookrightarrow S^\mu X$ denote the inclusion maps.  We will consider the hyperbolic restriction functor $f^! g^*: D^b(S^\mu X,\kk) \to D^b(n \cdot 0,\kk)$.  There are several equivalent forms of this functor which we will find useful.
First, \cite[Theorem 1]{Br03} gives a natural isomorphism of functors $f^! g^* \simeq \bar f^* \bar g^!$, where 
\[\{n\cdot 0\} \stackrel{\bar f}{\longrightarrow} S^\mu\bar\Sigma \stackrel{\bar g}{\longrightarrow} S^\mu X\]
are the analogous inclusions defined using the repelling set $\bar\Sigma$ of the action (i.e., the $y$-axis) instead of $\Sigma$.  On the other hand, since $\cS$-constructible sheaves are invariant under the action of $GL_2(\C)$ on $X$, a homotopy argument rotating $\bar\Sigma$ to $\Sigma$ shows that $\bar f^* \bar g^! \simeq f^* g^!$.

\begin{proposition}\label{prop-exact}
	The functor $f^!g^*\simeq f^*g^!$ is $t$-exact.  In other words, for any perverse sheaf $\cF \in \Perv_\cS(S^\mu X,\kk)$, we have $H^k(f^!g^*(\cF))=0$ for $k\neq 0$.  In particular, we obtain an exact functor 
	\[ \Phi^\mu := H^0f^!g^* \simeq H^0f^*g^! \colon \Perv_\cS(S^\mu X,\kk) \to \Vect_\kk.\]
\end{proposition}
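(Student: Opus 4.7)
The strategy is to combine Braden's theorem (already invoked in the paragraph preceding the proposition, giving $f^!g^* \simeq \bar f^*\bar g^! \simeq f^*g^!$ on $\cS$-constructible sheaves) with a half-dimensional estimate on each stratum, followed by Verdier duality to obtain the opposite bound.

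\emph{Step 1 (dimension estimate).} For each stratum $S = S^n_\lambda X$, whose complex dimension is $2\ell(\lambda)$, the intersection $S \cap S^n\Sigma$ parameterizes configurations $\sum_i \lambda_i p_i$ with distinct $p_i \in \Sigma$, and is therefore open in $S^{\ell(\lambda)}\Sigma$. In particular it is smooth locally closed of complex dimension $\ell(\lambda) = \tfrac12\dim_\C S$. Identically for $S \cap S^n\bar\Sigma$.

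\emph{Step 2 (upper bound).} By a standard devissage with respect to $\cS$ (writing an arbitrary $\cF \in \Perv_\cS(S^nX,\kk)$ as iterated extensions of the perverse sheaves $(j_S)_!\cL[\dim_\C S]$ for local systems $\cL$ on strata $S$), it suffices to show that $f^!g^*((j_S)_!\cL[2\ell(\lambda)])$ lies in cohomological degrees $\leq 0$. Using that $\C^*$ contracts $S^n\Sigma$ onto $n\cdot 0$, choose a fundamental system of $\C^*$-stable open neighborhoods $V$ of $n\cdot 0$ in $S^n X$; base change along $g$ and along the open inclusion of $S$ then identifies this hyperbolic stalk with $R\Gamma_c(S\cap S^n\Sigma \cap V, \cL)[2\ell(\lambda)]$. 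Since $S\cap S^n\Sigma \cap V$ has complex dimension $\ell(\lambda)$ by Step 1, its compactly supported cohomology vanishes in degrees $> 2\ell(\lambda)$, so after shifting by $[2\ell(\lambda)]$ the complex lies in degrees $\leq 0$.

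\emph{Step 3 (dualizing).} Verdier duality gives $\D \circ f^!g^* \simeq f^*g^! \circ \D$, and $\D$ preserves perversity. Applying Step 2 to $\D\cF$, we get $f^!g^*(\D\cF) \in D^{\leq 0}$, and by the Braden--$GL_2$ isomorphism noted above, $f^*g^!(\D\cF) \in D^{\leq 0}$ as well. Dualizing back, $f^!g^*\cF \in D^{\geq 0}$. Combined with Step 2 applied to $\cF$ itself, $f^!g^*\cF$ is concentrated in degree $0$, hence perverse as an object on a point.

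The main obstacle is the identification in Step 2 of the hyperbolic stalk with the compactly supported cohomology of $S\cap S^n\Sigma \cap V$. It requires carefully setting up the local geometry around $n\cdot 0$: one must know that one can choose $V$ so that $V\cap S^n\Sigma$ is $\C^*$-stable and the attracting map $V\cap S^n\Sigma \to \{n\cdot 0\}$ behaves well under base change, and then combine the base-change isomorphisms for the closed inclusion $g$ and the locally closed inclusion of the stratum. Once this identification is in place, the half-dimensional estimate from Step 1 immediately gives the required degree bound, and Verdier duality finishes the proof.
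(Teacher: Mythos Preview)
Your proof is correct and follows essentially the same strategy as the paper: the half-dimensional estimate $\dim_\C (S^n_\lambda X \cap S^n\Sigma) = \ell(\lambda) = \tfrac12\dim_\C S^n_\lambda X$, a compactly-supported-cohomology bound giving right $t$-exactness, and Verdier duality plus the Braden isomorphism for left $t$-exactness.

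The one place where the paper is cleaner is your Step~2. Rather than doing a d\'evissage to the building blocks $(j_S)_!\cL[\dim_\C S]$ and then wrestling with a fundamental system of $\C^*$-stable neighborhoods $V$, the paper applies the contraction lemma directly: since $S^n\Sigma$ is contracted to $n\cdot 0$ by the $\C^*$-action, one has $f^! \simeq q_!$ on $S^n\Sigma$ (where $q\colon S^n\Sigma \to \{n\cdot 0\}$ is the constant map), so
\[
H^k(f^!g^*\cF) \cong H^k_c(S^n\Sigma,\cF|_{S^n\Sigma}).
\]
Then one just filters $S^n\Sigma$ by the strata $S^n_\lambda\Sigma$ and uses the perversity condition on $\cF$ directly: $\cF|_{S^n_\lambda X}$ has cohomology in degrees $\le -2\ell(\lambda)$, and $\dim_\R S^n_\lambda\Sigma = 2\ell(\lambda)$, so $H^k_c(S^n_\lambda\Sigma, \cF)=0$ for $k>0$. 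This eliminates both the d\'evissage and the ``main obstacle'' you flag: the intersection with $V$ in your formula is superfluous, and what you are really invoking is exactly the contraction lemma. The argument is the same as in Mirkovi\'c--Vilonen's proof of exactness of weight functors.
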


\begin{proof}
	The following argument is adapted from \cite[Theorem 3.5]{MV}.
	Suppose that $\cF \in {}^pD^{\le 0}_\cS(S^\mu X,\kk)$.
	If $q\colon S^\mu \Sigma \to \{n \cdot 0\}$ is the constant map, the contraction lemma (see for example~\cite[Cor. 1]{SprContraction} or \cite[Theorem 2.10.3]{AcharBook})
    implies that 
    \[H^k(f^!g^*\cF) \cong H^k(q_!g^*\cF) \cong H^k_c(S^\mu \Sigma, \cF|_{S^\mu\Sigma}).\]
    For any refinement $\lambda$ of $\mu$, put $S^\mu_\lambda\Sigma := S^\mu_\lambda X \cap S^\mu\Sigma$.  Then we have 
    \[\dim_\C S^\mu_\lambda X = 2\ell(\lambda) \quad \mbox{and} \quad \dim_\C S^\mu_\lambda\Sigma = \ell(\lambda).\]
    Then $\cF \in {}^pD^{\le 0}_\cS(S^\mu X,\kk)$ implies that the cohomology sheaves of $\cF|_{S^\mu_\lambda X}$ vanish in degrees greater than $-2\ell(\lambda)$, so we have
    \[H^k_c(S^\mu_\lambda \Sigma, \cF|_{S^\mu_\lambda\Sigma}) = 0 \quad \mbox{for} \quad k > -2\ell(\lambda) + \dim_\R S^n_\lambda\Sigma = 0.\]
    It follows that 
	$$ H^k_c(S^\mu_\lambda \Sigma, \cF)=0 \quad \mathrm{for} \quad k>0,$$
	So the functor $f^!g^*$ is right $t$-exact.  
	Then using the alternate expression for our functor, we see that $f^*g^! \simeq \D f^!g^*\D$ is left $t$-exact.
\end{proof}

\begin{definition}
	For any set partitions $\lambda \leq \mu \in \Part{[n]}$, let $$\Phi^\mu_\lambda : \Perv_\cS(S^\mu X,\kk) \to \Vect_\kk$$ be the exact functor defined as
	$ \Phi^\mu_\lambda := \Phi^\lambda (\omega^\mu_\lambda)^*.$  When $\mu=(n)$ we may simply write $\Phi_\lambda$ instead of $\Phi^{(n)}_\lambda$.
%	\[ \Phi_\lambda := (\Phi_{\hat\lambda_1} \boxtimes \Phi_{\hat\lambda_2} \boxtimes \dots \boxtimes \Phi_{\hat\lambda_r})\circ (\omega_\lambda)^*. \]
%	Here we have identified $$\Perv_\cS (S^\lambda X) \simeq \Perv_\cS (S^{\hat\lambda_1} X) \boxtimes \Perv_\cS (S^{\hat\lambda_2} X) \boxtimes \dots \boxtimes \Perv_\cS (S^{\hat\lambda_r} X).$$
\end{definition}

\begin{example}
	When $\lambda = (n)$, then 
	$\Phi_{(n)}=\Phi^{(n)}$ is the functor of hyperbolic restriction of perverse sheaves on $S^nX$ to the fixed point $\{n\cdot 0\}$, and when $\lambda = (1^n)$, then $\Phi_{(1^n)}$ is isomorphic to the stalk functor at a point of $S^n_{(1^n)} X$.
\end{example}

The ``hyperbolic stalks" provided by the functors $\Phi^\mu_\lambda$ are complete, in the following sense.

\begin{lemma}\label{lem:hyperbolic stalks are complete}
	Let $\mu \in \Part{[n]}$. Take perverse sheaves $\cF, \cG \in \Perv_\cS(S^\mu X,\kk)$.
	\begin{enumerate}
		\item 	If $\Phi^\mu_\lambda(\cF) = 0$ for all $\lambda \leq \mu$, then $\cF = 0$.
		\item For a homomorphism $f\colon \cF\to \cG$, if $\Phi^\mu_\lambda(f) \in 
		\Hom_{\kk\mmod}(\Phi^\mu_\lambda(\cF),\Phi^\mu_\lambda(\cG))$ is an injection, (respectively a surjection or an isomorphism) for every $\lambda\leq \mu$, then $f$ is an injection (respectively, a surjection or an isomorphism).
		\item The map
		\[\Hom(\cF,\cG)\to \bigoplus_{\lambda \leq \mu} \! \Hom(\Phi^\mu_\lambda\cF, \Phi^\mu_\lambda\cG)\]
		is injective.  In other words, the functor $\bigoplus_{\lambda \leq \mu} \Phi^\mu_\lambda$ is faithful.
	\end{enumerate}
\end{lemma}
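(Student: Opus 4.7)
First, I would reduce parts (2) and (3) to part (1) using the exactness of each $\Phi_\lambda$ (Proposition~\ref{prop-exact}). For (2), if $\Phi_\lambda(f)$ is injective (respectively surjective, or an isomorphism) for every $\lambda$, then exactness gives $\Phi_\lambda(\Ker f) = 0$ (respectively $\Phi_\lambda(\coker f) = 0$, or both) for every $\lambda$, so (1) forces $\Ker f = 0$ (respectively $\coker f = 0$, or both). For (3), the same argument applied to $\im f$ gives $\im f = 0$.

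For (1), the strategy is to localize at a ``most generic'' stratum in the support of $\cF$. Assuming $\cF \neq 0$, I would choose $\alpha \in \Part{n}$ that is \emph{minimal} in the refinement order $\preceq$ among partitions $\mu$ with $\cF|_{S^n_\mu X} \neq 0$, so that $\cF|_{S^n_\beta X} = 0$ for every $\beta \prec \alpha$; such an $\alpha$ exists since $\Part{n}$ is finite. The goal then becomes to show $\Phi_\alpha(\cF) \neq 0$.

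Setting $r = \ell(\alpha)$, the strata of $S^\alpha X$ for the product stratification are indexed by tuples $(\gamma_1,\dots,\gamma_r)$ where each $\gamma_j$ is a partition of $\alpha_j$. The key geometric observation is that $\omega_\alpha$ sends the stratum indexed by $(\gamma_1,\dots,\gamma_r)$ into $S^n_\beta X$ with $\beta = \gamma_1 \sqcup \cdots \sqcup \gamma_r$, and the unique tuple yielding $\beta = \alpha$ is $\gamma_j = (\alpha_j)$ for all $j$, whose stratum is the closed subvariety $X^r = X \times \cdots \times X \subset S^\alpha X$ (the product of the diagonals $S^{\alpha_j}_{(\alpha_j)} X \cong X$). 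Every other stratum of $S^\alpha X$ maps into $S^n_\beta X$ for some $\beta \prec \alpha$, so by the minimality of $\alpha$ the pullback $(\omega_\alpha)^*\cF$ vanishes on every stratum of $S^\alpha X$ other than $X^r$. Since $X^r \cong \C^{2r}$ is smooth and simply connected, the $\cS$-constructible perverse sheaf $(\omega_\alpha)^*\cF$ must have the form
\[(\omega_\alpha)^*\cF \;\cong\; i_*\bigl(\uk_{X^r}[2r]\otimes V\bigr) \;\cong\; \bigl((i_1)_*\uk_X[2]\bigr) \boxtimes \cdots \boxtimes \bigl((i_r)_*\uk_X[2]\bigr) \otimes V,\]
where $i_j\colon X \hookrightarrow S^{\alpha_j} X$ is the diagonal inclusion and $i = i_1 \times \cdots \times i_r$. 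The vector space $V$ identifies (up to the perverse shift) with the stalk of the nonzero local system $\cF|_{S^n_\alpha X}$ at $\omega_\alpha(0)$, and is nonzero because $S^n_\alpha X$ is connected.

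Applying $\Phi_\alpha = (\Phi_{\alpha_1} \boxtimes \cdots \boxtimes \Phi_{\alpha_r}) \circ (\omega_\alpha)^*$ and using the K\"unneth structure will give
\[\Phi_\alpha(\cF) \;\cong\; \bigotimes_{j=1}^r \Phi_{\alpha_j}\bigl((i_j)_*\uk_X[2]\bigr) \otimes V.\]
Each factor is computed by a short base change along the Cartesian square with vertical arrows $\Sigma \hookrightarrow X$ and $S^{\alpha_j}\Sigma \hookrightarrow S^{\alpha_j} X$: this reduces $\Phi_{\alpha_j}\bigl((i_j)_*\uk_X[2]\bigr)$ to $\iota_0^!\, \uk_\Sigma[2] \cong \kk$, where $\iota_0\colon \{0\} \hookrightarrow \Sigma$. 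Hence $\Phi_\alpha(\cF) \cong V \neq 0$, completing (1). The most delicate step is the support-vanishing argument that isolates $X^r$: it requires combining the minimality of $\alpha$ with the explicit description of how strata of $S^\alpha X$ map into strata of $S^n X$ under $\omega_\alpha$, after which the hyperbolic-restriction calculation is routine.
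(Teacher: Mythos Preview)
Your proof is correct and follows essentially the same approach as the paper. The paper's argument for (1) is terser: it simply asserts that if $S^n_\alpha X$ is open in the support of $\cF$ then $\Phi_\alpha(\cF)$ is the stalk of the local system $\cF|_{S^n_\alpha X}$, whereas you carry out this computation in detail via the support analysis of $(\omega_\alpha)^*\cF$ and the explicit hyperbolic restriction on the closed stratum; for (3) the paper routes through the four-term exact sequence $0 \to \Phi_\lambda\Ker f \to \Phi_\lambda\cF \to \Phi_\lambda\cG \to \Phi_\lambda\Coker f \to 0$ and invokes (2), while your use of $\Phi_\lambda(\im f) = \im\Phi_\lambda(f) = 0$ is a slightly more direct route to the same conclusion.
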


\begin{proof}
	To see the statement (1), suppose that $\cF \ne 0$.  Its support is a closed union of strata, so we can take a maximal stratum $S^\mu_\lambda X$ in the support.  Restricting to this stratum, we have an isomorphism
	\[\cF|_{S^\mu_\lambda X} \cong \cL[-\dim_\C S^\mu_\lambda X],\] where $\cL$ is a nonzero local system on $S^\mu_\lambda X$.  Then $\Phi^\mu_\lambda(\cF)$ is simply the stalk of this local system at some point of $S^\mu_\lambda X$, so in particular it is nonzero. For the statement (2), apply (1) to $\Ker f$ and $\Coker f$.  
	
	For the statement (3), let $f\colon \cF\to \cG$ be a homomorphism. For any $\lambda$, by the exactness of $\Phi^\mu_\lambda$, we get an exact sequence
\[ 0 \to \Phi^\mu_\lambda \Ker f \to \Phi^\mu_\lambda \cF \to \Phi^\mu_\lambda \cG \to \Phi^\mu_\lambda \Coker f \to 0. \]
If $\Phi^\mu_\lambda f = 0$ for all $\lambda \leq \mu$, then by (2) $\Ker f \to \cF$ and $\cG \to \Coker f$ must be isomorphisms and we conclude that $f=0$.
\end{proof}

%%%%%
\subsection{Hyperbolic restriction is represented by the Hilbert-Chow sheaf}\label{sec:Hilbert-Chow and hyperbolic restriction}
%%%%%

\begin{theorem}\label{thm-representF_lambda}
For any set partitions $\lambda \leq \mu \in \Part{[n]}$, the exact functor $\Phi^\mu_\lambda$ is represented by the perverse sheaf $\cF^\mu_\lambda$.  In other words, there is an isomorphism of functors 
\[ \Hom(\cF^\mu_\lambda,-) \simeq \Phi^\mu_\lambda(-) :\Perv_\cS(S^\mu X,\kk) \to \Vect_\kk. \]
\end{theorem}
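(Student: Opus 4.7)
The plan is to combine the biadjunction of Theorem~\ref{thm-biadjoint} with a product-space analogue of Theorem~\ref{thm-representFn}. First, choose a factorization map $\omega_\lambda\colon S^\lambda X \to S^n X$. Theorem~\ref{thm-biadjoint} gives that $(\sigma_\lambda)_*$ is biadjoint to $(\omega_\lambda)^*$; using the adjunction with $(\sigma_\lambda)_*$ on the left yields, for any $\cG \in \Perv_\cS(S^n X, \kk)$,
\[\Hom(\cF_\lambda, \cG) = \Hom((\sigma_\lambda)_* \cB_\lambda, \cG) \;\cong\; \Hom(\cB_\lambda, (\omega_\lambda)^* \cG).\]
So it suffices to produce a natural isomorphism $\Hom(\cB_\lambda, \cH) \cong (\Phi_{\hat\lambda_1} \boxtimes \dots \boxtimes \Phi_{\hat\lambda_r})(\cH)$ for all $\cH \in \Perv_\cS(S^\lambda X, \kk)$.

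Next, I reinterpret the right-hand side as a hyperbolic restriction on $S^\lambda X$. Give $S^\lambda X = \prod_i S^{\hat\lambda_i}X$ the diagonal $\C^*$-action inherited from each factor. Its unique fixed point $0_\lambda$ has attracting locus $\prod_i S^{\hat\lambda_i}\Sigma$, and the same argument as Proposition~\ref{prop-exact} makes the associated hyperbolic restriction $\Phi^\lambda$ a $t$-exact functor $\Perv_\cS(S^\lambda X,\kk) \to \Vect_\kk$. Proper base change plus the contracting lemma identify $\Phi^\lambda(\cH_1 \boxtimes \dots \boxtimes \cH_r) = \bigotimes_i \Phi_{\hat\lambda_i}(\cH_i)$ on external products, so $\Phi^\lambda$ agrees with $\Phi_{\hat\lambda_1} \boxtimes \dots \boxtimes \Phi_{\hat\lambda_r}$ under the identification of $\Perv_\cS(S^\lambda X,\kk)$ with $\boxtimes_i \Perv_\cS(S^{\hat\lambda_i}X, \kk)$ used in the definition of $\Phi_\lambda$.

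Then I repeat the proof of Theorem~\ref{thm-representFn} in the product setting. The group $P_\lambda := \prod_i \Pol_{\hat\lambda_i - 1}$ is contractible and acts compatibly on $\Hilb^\lambda X$ and $S^\lambda X$ via $\rho_\lambda$, so every $\cS$-constructible sheaf on $S^\lambda X$ is $P_\lambda$-equivariant, giving the product analogue of equation~\eqref{eq-smooth}. The open subset $\cU_\lambda := \prod_i \cU_{\hat\lambda_i} \subset \Hilb^\lambda X$ satisfies the product form of Lemma~\ref{lem-cU}: the action map $P_\lambda \times \Hilb^\lambda \Sigma \to \cU_\lambda$ is an isomorphism. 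The identical chain of adjunctions then yields
\[\Hom((\rho_\lambda)_* \phi_! \uk_{\cU_\lambda}[2n], \cH) \;\cong\; \Phi^\lambda(\cH).\]

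I expect the main obstacle to be the error-term vanishing: showing that $\Hom^k((\rho_{Z_\lambda})_* \uk_{Z_\lambda}[2n], \cH) = 0$ for $k = 0, 1$, where $Z_\lambda := \Hilb^\lambda X \setminus \cU_\lambda$ decomposes as $\bigcup_i \big(\prod_{j \ne i}\Hilb^{\hat\lambda_j}X\big) \times Z_{\hat\lambda_i}$ with $Z_{\hat\lambda_i} := \Hilb^{\hat\lambda_i}X \setminus \cU_{\hat\lambda_i}$. Here $\rho_\lambda$ is semi-small as a product of the semi-small maps $\rho_{\hat\lambda_i}$, and the fibers of each $\rho_{\hat\lambda_i}$ are irreducible by Brian\c{c}on's theorem, so the codimension bookkeeping in the proof of Theorem~\ref{thm-representFn} extends factor-by-factor to $Z_\lambda$ and yields the desired vanishing. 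Combining the three steps produces a natural isomorphism $\Hom(\cF_\lambda,\cG) \cong \Phi_\lambda(\cG)$, and exactness of $\Phi_\lambda$ as a composition of exact functors then forces $\cF_\lambda$ to be projective.
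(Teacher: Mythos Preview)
Your proposal is correct, and its opening move --- using the biadjunction of Theorem~\ref{thm-biadjoint} to reduce $\Hom(\cF_\lambda,\cG)$ to $\Hom(\cB_\lambda,(\omega_\lambda)^*\cG)$ --- is exactly what the paper does. The divergence is in how you handle the remaining identification $\Hom(\cB_\lambda,\cH) \cong (\Phi_{\hat\lambda_1}\boxtimes\dots\boxtimes\Phi_{\hat\lambda_r})(\cH)$. The paper dispatches this in one line: since $\cB_\lambda \cong \cF_{\hat\lambda_1}\boxtimes\dots\boxtimes\cF_{\hat\lambda_r}$ and $\Perv_\cS(S^\lambda X,\kk) \simeq \boxtimes_i \Perv_\cS(S^{\hat\lambda_i}X,\kk)$, Theorem~\ref{thm-representFn} applied to each factor immediately gives $\Hom(\cB_\lambda,-) \simeq \boxtimes_i \Hom(\cF_{\hat\lambda_i},-) \simeq \boxtimes_i \Phi_{\hat\lambda_i}$. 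You instead rebuild the proof of Theorem~\ref{thm-representFn} in the product setting: a product $\C^*$-action, the product group $P_\lambda = \prod_i \Pol_{\hat\lambda_i-1}$, the product open $\cU_\lambda$, and a factor-by-factor version of the codimension argument for the complement $Z_\lambda$. This works (the fibers of $\rho_\lambda$ are products of irreducible Brian\c{c}on fibers, so the $c_1=0 \Rightarrow c_2>0$ step goes through), but it duplicates effort already packaged in Theorem~\ref{thm-representFn}. The paper's route is shorter and cleaner; yours has the minor virtue of making the product hyperbolic restriction explicit, which is conceptually nice but not needed here.
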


%\begin{corollary}\label{cor:dim End Fn}
%	The dimension $\dim_\kk \End(\cF_n)$ is 
%	\[p(n) = |\Part{n}| = \dim_\kk ({}_nR_n) = \rank K(\fS_n\mmod).\]  
%\end{corollary}
%
%\begin{proof}[Proof of Corollary \ref{cor:dim End Fn}]
%     Theorem \ref{thm-representFn} gives $\End(\cF_n) = \Phi(\cF_n) = H_c^{2n}(\rho_n^{-1}(S^n\Sigma))$.  It is well-known (see \cite[Chapter 7]{NakBook}, for instance) that $\rho_n^{-1}(S^n\Sigma)$ is a Lagrangian subvariety of $\Hilb^n X$, with components indexed by partitions of $n$.  
%\end{proof}

In the proof of the Theorem, we will make use of the following action on $X=\C^2$.  Let $\Pol_{n-1}$ denote the additive group of polynomials in $x$ of degree at most $n-1$.  This acts on $X$ via 
\[ f \cdot (x,y) = (x,y+f(x)). \]
This action induces actions of $\Pol_{n-1}$ on $S^n X$ and $\Hilb^n X$, and the map $\rho_n: \Hilb^n X \to S^n X$ is equivariant with respect to these actions.

Recall that we have used $\Sigma \subset X$ to denote the curve $y=0$.  Define a Zariski open subset of $\Hilb^n X$ by 
\[ \cU_n = \{ I \in \Hilb^n X \mid \C[x] \to \C[x,y]/I \textrm{~is~surjective}\}. \]
In other words, $\cU_n$ is the set of points which project to a point of $\Hilb^n \Sigma$ under the projection $X \to \Sigma$.  Since the action of $\Pol_{n-1}$ fixes the subalgebra $\C[x] \subset \C[x,y]$, it preserves $\cU_n$.

\begin{lemma}\label{lem-cU}
The restriction of the action map to $\Hilb^n \Sigma \subset \Hilb^n X$,
\[ \Pol_{n-1} \times \Hilb^n \Sigma  \to \Hilb^n X, \]
maps $\Pol_{n-1} \times \Hilb^n \Sigma$ isomorphically onto $\cU_n$.
\end{lemma}

\begin{proof}[Proof of Lemma~\ref{lem-cU}]
The projection $X \to \Sigma$ induces a map $\pi\colon \cU_n \to \Hilb^n \Sigma$.  More precisely, if $I \in \cU_n$, the kernel of 
$\C[x] \to \C[x,y]/I$ is the ideal of a degree $n$ polynomial $x^n - g(x)$, where $g\in \Pol_{n-1}$.  This ideal $(x^n - g(x))$ represents the point $\pi(I) \in \Hilb^n(\Sigma)$.

As a result, the composition
\[\Pol_{n-1} \to \C[x] \to \C[x,y]/I\]
is an isomorphism of vector spaces.  Define a map $\xi\colon \cU_n \to \Pol_{n-1}$ by letting $\xi(I)$ be the preimage of $\bar{y} \in \C[x,y]/I$ under this map.  Then $I \mapsto (\xi(I),\pi(I))$ is the inverse map $\cU_n \to \Pol_{n-1} \times \Hilb^n \Sigma$.  
\end{proof}

\begin{remark}
	The open set $\cU_n$ was previously considered by Grojnowski (for somewhat different reasons) in \cite[Proposition 6]{Groj}.
\end{remark}

For $\lambda \in \Part{[n]}$, let $\Pol_\lambda = \Pol_{\hat\lambda_1-1} \times \dots \times \Pol_{\hat\lambda_r-1}$ and $\cU_\lambda =\cU_{\hat\lambda_1} \times \dots \times \cU_{\hat\lambda_r}$, and consider the product actions of $\Pol_\lambda$ on $S^\lambda X$ and $\Hilb^\lambda X$.  It follows immediately from Lemma \ref{lem-cU}:

\begin{corollary}\label{cor-cU}
The restriction of the action map to $\Hilb^\lambda \Sigma \subset \Hilb^\lambda X$,
\[ \Pol_{\lambda} \times \Hilb^\lambda \Sigma  \to \Hilb^\lambda X, \]
maps $\Pol_{\lambda} \times \Hilb^\lambda \Sigma$ isomorphically onto $\cU_\lambda$.
\end{corollary}

\begin{proof}[Proof of Theorem~\ref{thm-representF_lambda}]
Since $\Phi^\mu_\lambda = \Phi^\lambda (\omega^\mu_\lambda)^*$, the adjunction of Theorem~\ref{thm-biadjoint} implies that it suffices to show that $\Phi^\lambda$ is represented by $\cB_\lambda = \cF^\lambda_\lambda$.

Suppose $\cG \in \Perv_\cS(\Hilb^\lambda X,\kk)$.  The stratification $\cS$ is stable under the action of the contractible group $\Pol_{\lambda}$, so $\cG$ is $\Pol_{\lambda}$-equivariant.  Thus if
$$ a,p: \Pol_{\lambda} \times S^\lambda X \to S^\lambda X$$
are the action and projection maps, then $$a^* \cG \cong p^* \cG.$$  Furthermore, the map $a$ is smooth with $n$-dimensional fibers, so
\[ a^! \cG \cong a^* \cG[2n] \cong p^* \cG[2n] \]
and
\begin{equation}\label{eq-smooth}
 \cG \cong p_*p^* \cG \cong p_* a^! \cG[-2n].
\end{equation}

Consider the following diagram:
\[
\begin{tikzcd}
&&&\Pol_{\lambda} \times \Hilb^\lambda X \arrow[dl, "\id \times \rho_\lambda"] \arrow[dd,"a'"]&\\
&\Pol_{\lambda} \times S^\lambda \Sigma \arrow[urr, bend left=10, "h"] \arrow[dd, "p_\Sigma"] \arrow[r, "\id \times g"]& \Pol_{\lambda} \times S^\lambda X \arrow[dd,xshift=0.7ex,"a"]\arrow[dd,xshift=-0.7ex,"p"']& &\\
&&&\Hilb^\lambda X \arrow[dl,"\rho_\lambda"]& \cU_\lambda \arrow[l,hook',"\phi"]\\
n \cdot 0 \arrow[r,hook,yshift=0.7ex,"f"] & S^\lambda \Sigma \arrow[l,yshift=-0.7ex,"q"] \arrow[r,hook,"g"]& S^\lambda X&&.
\end{tikzcd}
\]
Here $q$ is the unique map to the point, $p_\Sigma$ is the projection, $a'$ is the action map, $h$ is the identity on the first factor and the inclusion 
$$ S^\lambda \Sigma \cong \Hilb^\lambda \Sigma \hookrightarrow \Hilb^\lambda X$$
on the second factor, and $\phi$ is the inclusion.  The triangles and the inner quadrilaterals all commute, and the left square is a fiber square.

Using the alternate definition of $\Phi^\lambda$ we find:
\begin{align*}
\Phi^\lambda (\cG) & = \Hom(\uk_{n\cdot 0}, f^* g^! \cG)\\ & \cong \Hom(\uk_{n\cdot 0}, q_* g^! \cG) & \mbox{(Contracting lemma)}\\ 
& \cong \Hom(\uk_{n\cdot 0}, q_* g^! (p_* a^! \cG [-2n]))  & \mbox{(Equation \ref{eq-smooth})} \\
 & \cong \Hom(a_! p^* g_! q^* \uk_{n\cdot 0} [2n], \cG) & \mbox{(adjunction)}\\
& \cong \Hom(a_! (\id \times g)_! p_\Sigma^* q^* \uk_{n\cdot 0} [2n], \cG) & \mbox{(base change)}\\
& \cong \Hom(a_! (\id \times g)_! \uk_{\Pol_{\lambda} \times S^\lambda \Sigma} [2n], \cG)\\
& \cong \Hom((\rho_\lambda)_! a'_! h_! (\uk_{\Pol_{\lambda} \times S^\lambda \Sigma} [2n]), \cG)\\
& \cong \Hom((\rho_\lambda)_! \phi_! (\uk_{\cU} [2n]), \cG). & \mbox{(Corollary \ref{cor-cU})}
\end{align*}
This isomorphism holds for any complex $\cG$ for which $g^!\cG$ is constructible with respect to a conic stratification on $S^\lambda\Sigma$.  So for such complexes we have shown that $\Phi^\lambda$ is represented by $(\rho_\lambda)_! \phi_! (\uk_{\cU_\lambda} [2n]) \in D^b(S^\lambda X, \kk)$.  It remains to show that when $\cG$ is perverse and $\cS$-constructible, we have
\[ \Hom((\rho_\lambda)_! \phi_! (\uk_{\cU} [2n]),\cG) \cong \Hom( \cB_\lambda, \cG). \]

Let $Z = \Hilb^\lambda X \setminus \cU_\lambda$ and let $\rho_Z: Z \to S^\lambda X$ be the restriction of $\rho_\lambda$ to $Z$.  Consider the  triangle
\[ (\rho_\lambda)_! \phi_! (\uk_{\cU} [2n]) \to (\rho_\lambda)_! (\uk_{\Hilb^\lambda X} [2n]) = \cB_\lambda \to (\rho_Z)_! \uk_Z [2n] \stackrel{[1]}{\to}.\]

By the long exact sequence, it suffices to show 
$$\Hom^i((\rho_Z)_! \uk_Z [2n],\cG)=0 \quad \mathrm{for} \quad i=0,1.$$

Let $r\colon S^\lambda X \to S^\lambda X$ be the map induced by the map $X \to X$, $(x,y) \mapsto (x,0)$.  For partitions $\mu, \nu \le \lambda$ define
\[Y_{\mu,\nu} = S^\lambda_\mu X\cap r^{-1}(S^\lambda_\nu X).\]
It is empty unless $\mu\le \nu$, and it is a smooth variety of dimension $\ell(\mu)+\ell(\nu)$.  
The decomposition of $S^\lambda X$ into sets $Y_{\mu,\nu}$ refines the stratification $\cS$.  It is possible to show that it is a stratification in the sense of Definition \ref{defn:stratification}, but we do not need this.

Fix an ordering $Y_1,\dots,Y_r$ of the sets $Y_{\mu,\nu}$ such that each union $Y_1\cup\dots\cup Y_p$ is closed.  Following~\cite[3.4]{BGS}, there is a spectral sequence that abuts to $\Hom^k((\rho_Z)_! \uk_Z [2n],\cG)$
with 
$E_1$-term
\[  E_1^{p,q} = \Hom^{p+q}(i_{Y_p}^* (\rho_Z)_! \uk_Z [2n], i_{Y_p}^! \cG),\]
where $i_Y\colon Y \to S^\lambda X$ denotes the inclusion.
By this spectral sequence, it suffices to show that for any $Y = Y_{\mu,\nu}$,
$$\Hom^k(i_Y^*(\rho_Z)_* \uk_Z [2n],i_Y^! \cG)=0 \quad \mathrm{for} \quad k=0,1.$$

Suppose $Y = Y_{\mu,\nu}\subset S^\lambda_\mu X$.  Recall that $\dim S^\lambda_\mu X = 2 \ell(\mu)$.  Let $c_1 = \ell(\mu) - \ell(\nu)$ be the codimension of $Y$ in $S^\lambda_\mu X$. As $\cG$ is assumed to be $\cS$-constructible, it follows that 
$$i_Y^! \cG \in {}^p D^{\geq c_1}(Y, \kk).$$

Note that over each $Y = Y_{\mu,\nu}$ the map $\rho_Z$ restricts to a fibration.
For any $y \in Y$, let $c_2$ be the codimension of $\rho_Z^{-1}(y)$ in $\rho_\lambda^{-1}(y)$.  Since $\rho_\lambda$ is semismall, the restriction of  
$(\rho_Z)_! \uk_Z [2n]$ to $S^\lambda_\mu X$ lies in ${}^p D^{\leq - 2c_2}(S^n_\lambda,\kk).$  Restricting further to $Y$, we get that
$i_Y^* (\rho_Z)_! \uk_Z [2n] \in {}^p D^{\leq - 2c_2-c_1}(Y,\kk)$.
Combining these statements, we have \[\Hom^k(i_Y^*(\rho_Z)_! \uk_Z [2n],i_Y^! \cG)=0 \;\,\mbox{for any}\;\, k< 2(c_1+c_2).\] So our result will follow if we can show that either $c_1$ or $c_2$ is non-zero.  

If $Y=Y_{\mu,\nu}$ with $\mu < \nu$, then $c_1 > 0$.  So consider the case $Y = Y_{\mu,\mu}$.
Any point $y \in Y$ is in the image of $\Pol_{\lambda} \times S^n \Sigma \to S^n X$, and so $\cU\cap \rho_\lambda^{-1}(y)$ is not empty.  But $\cU$ is a Zariski open set, so $\rho_Z^{-1}(y)$ is a proper closed subvariety of the fiber $\rho_\lambda^{-1}(y)$.  This fiber is irreducible (\cite{Bri77}), so the codimension $c_2$ of $\rho_Z^{-1}(y)$ in $\rho_\lambda^{-1}(y)$ is positive.
\end{proof}

The functor $\Phi^\lambda$ is $t$-exact, by Proposition \ref{prop-exact}, and pullback by the open inclusion $\omega^\mu_\lambda$ is $t$-exact, so Theorem \ref{thm-representF_lambda} implies that $\cF^\mu_\lambda$ is a projective object in $\Perv_\cS(S^\mu X,\kk)$.
Since $\cF^\mu_\lambda$ is self-dual, we have the following.

\begin{corollary}
	The sheaf $\cF^\mu_\lambda$ is both projective and injective in $\Perv_{\cS}(S^\mu X, \kk).$
\end{corollary}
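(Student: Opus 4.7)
The plan is to deduce both halves directly, with projectivity coming for free from Theorem~\ref{thm-representF_lambda} and injectivity coming from Verdier self-duality of $\cF_\lambda$. Projectivity is immediate: the functor $\Phi_\lambda = (\Phi_{\hat\lambda_1}\boxtimes\cdots\boxtimes\Phi_{\hat\lambda_r})\circ(\omega_\lambda)^*$ is a composition of the $t$-exact functor $(\omega_\lambda)^*$ with the exact hyperbolic restriction functors $\Phi_{\hat\lambda_i}$ of Proposition~\ref{prop-exact}, hence is exact, and the representability isomorphism $\Hom(\cF_\lambda,-)\simeq\Phi_\lambda(-)$ shows that $\Hom(\cF_\lambda,-)$ is exact. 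So it only remains to address injectivity.

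The key observation is that $\cF_\lambda$ is Verdier self-dual. Indeed, $\Hilb^\lambda X$ is smooth of complex dimension $2n$, so $\uk_{\Hilb^\lambda X}[2n]$ is self-dual, and both maps in the composition
\[\Hilb^\lambda X \xrightarrow{\rho_\lambda} S^\lambda X \xrightarrow{\sigma_\lambda} S^n X\]
are proper, so $(\rho_\lambda)_* = (\rho_\lambda)_!$ and $(\sigma_\lambda)_* = (\sigma_\lambda)_!$ both commute with the Verdier duality functor $\D$. Applying $\D$ through the definition $\cF_\lambda = (\sigma_\lambda)_*(\rho_\lambda)_*\uk_{\Hilb^\lambda X}[2n]$ therefore yields $\D\cF_\lambda\simeq \cF_\lambda$.

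To conclude injectivity, I use the standard interaction of $\D$ with $\Hom$: for any $\cG\in \Perv_\cS(S^n X,\kk)$,
\[\Hom(\cG,\cF_\lambda)\;\simeq\;\Hom(\cG,\D\cF_\lambda)\;\simeq\;\Hom(\cF_\lambda,\D\cG).\]
Since $\D$ is an exact contravariant auto-equivalence of $\Perv_\cS(S^n X,\kk)$ (it swaps short exact sequences), and $\Hom(\cF_\lambda,-)$ is exact by projectivity, the composite $\Hom(\cF_\lambda,\D(-))$ is an exact contravariant functor on $\Perv_\cS(S^n X,\kk)$. Hence $\Hom(-,\cF_\lambda)$ is exact, i.e., $\cF_\lambda$ is injective. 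There is essentially no obstacle here: the content is entirely in Theorem~\ref{thm-representF_lambda} and the elementary self-duality computation, so the corollary is a short formal consequence.
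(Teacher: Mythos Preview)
Your proposal is correct and matches the paper's approach: the paper simply notes that $\cF_\lambda$ is self-dual and deduces the corollary, leaving both the self-duality computation and the ``projective $+$ self-dual $\Rightarrow$ injective'' step implicit. You have written out exactly those two steps in detail, so your argument is a faithful expansion of the paper's one-line justification.
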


\begin{remark}
	Up to isomorphism, the functor $\Phi^\mu_\lambda$ and the perverse sheaf $\cF^\mu_\lambda$  only depend on the $\fS_\mu$-orbit of $\lambda$, which is given by an integer partition for each part of $\mu$.
\end{remark}

\begin{proposition}\label{prop-projgen}
	The sum $\cF^\mu := \bigoplus_{\lambda \le \mu} \cF^\mu_\lambda$ is a projective generator and an injective cogenerator of $\Perv_\cS(S^\mu X, \kk)$.
\end{proposition}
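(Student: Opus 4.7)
The plan is to combine three ingredients already in hand: the projectivity of each $\cF_\lambda$, the representability of $\Phi_\lambda$ by $\cF_\lambda$ (Theorem~\ref{thm-representF_lambda}), and the faithfulness of the total hyperbolic stalk functor $\Phi = \bigoplus_{\lambda \in \Part{n}} \Phi_\lambda$ established in Lemma~\ref{lem:hyperbolic stalks are complete}.

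First, since $\cF$ is a finite direct sum of the projective-injective objects $\cF_\lambda$ (Theorem~\ref{thm-representF_lambda} and its corollary), it is itself both projective and injective in $\Perv_\cS(S^n X,\kk)$.

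Next, to show that $\cF$ is a generator it suffices, since $\cF$ is projective, to verify that $\Hom(\cF,\cG) = 0$ implies $\cG = 0$ for every $\cG \in \Perv_\cS(S^n X,\kk)$. By the representability isomorphism of Theorem~\ref{thm-representF_lambda},
\[
\Hom(\cF,\cG) = \bigoplus_{\lambda \in \Part{n}} \Hom(\cF_\lambda,\cG) \cong \bigoplus_{\lambda \in \Part{n}} \Phi_\lambda(\cG),
\]
so the vanishing of $\Hom(\cF,\cG)$ is equivalent to the vanishing of $\Phi_\lambda(\cG)$ for every integer partition $\lambda$ of $n$. By Lemma~\ref{lem:hyperbolic stalks are complete}(1) this forces $\cG = 0$, whence $\cF$ is a projective generator.

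For the injective cogenerator statement I would invoke Verdier duality. Since $\Hilb^\lambda X$ is smooth of complex dimension $n$, the shifted constant sheaf $\uk_{\Hilb^\lambda X}[2n]$ is Verdier self-dual; as $\rho_\lambda$ and $\sigma_\lambda$ are proper, $\D$ commutes with the pushforwards $(\rho_\lambda)_*$ and $(\sigma_\lambda)_*$, giving $\D\cF_\lambda \cong \cF_\lambda$ and hence $\D\cF \cong \cF$. Since $\D$ is an anti-equivalence of $\Perv_\cS(S^n X,\kk)$, it sends projective generators to injective cogenerators, completing the proof. There is no real obstacle in the argument; the only mildly non-trivial input is the standard fact that, for a projective object $P$ in an abelian category, the detection condition $\Hom(P,\cG)=0 \Rightarrow \cG = 0$ is equivalent to $P$ being a generator, which follows by lifting maps from $P$ into images through the projection to $\im(f)$.
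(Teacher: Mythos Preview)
Your proof is correct and follows essentially the same approach as the paper: representability of $\Phi_\lambda$ by $\cF_\lambda$, completeness of the hyperbolic stalks, and self-duality to pass from the projective generator statement to the injective cogenerator statement. The only cosmetic difference is that the paper invokes part~(3) of Lemma~\ref{lem:hyperbolic stalks are complete} (faithfulness of $\Phi$) directly, whereas you use part~(1) together with the standard observation that for a projective $P$ the detection condition $\Hom(P,\cG)=0\Rightarrow\cG=0$ already forces $\Hom(P,-)$ to be faithful; since part~(3) is itself derived from parts~(1) and~(2), the two arguments are really the same.
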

\begin{proof}
	By the self-duality of $\cF^\mu$, it will be enough to show $\cF$ is a projective generator.  
To do that, it's enough to show that the functor $\Hom(\cF^\mu,-)$ is faithful.  But Theorem~\ref{thm-representF_lambda} implies that $\cF^\mu$ represents the exact functor $\bigoplus_{\lambda\le \mu} \Phi^\mu_\lambda$, which is faithful by Lemma~\ref{lem:hyperbolic stalks are complete}(3).
	
%	Let $c_\lambda \in \Phi_\lambda(\cF_\lambda)$ correspond to the identity $1_{\cF_\lambda}$ under the adjunction of Theorem \ref{thm-representF_lambda}.
%	Then for any $\cG \in \Perv_\cS(S^nX,\kk)$, the adjunction
%	$\Hom(\cF_\lambda,\cG) \stackrel{\sim}{\to} \Phi_\lambda(\cG)$ takes 
%	a map $f$ to $\Phi_\lambda(f)(c_\lambda)$.  This implies that the map
%	\[\bigoplus_{\lambda\in \Part{n}} \Phi_\lambda(\cG) \otimes \cF_\lambda \to \cG\]
%	obtained by adjunction induces surjections onto every hyperbolic stalk $\Phi_\lambda(\cG)$.  Lemma \ref{lem:hyperbolic stalks are complete} then implies that the map itself is surjective.
\end{proof}

\begin{lemma}\label{lem:omega* B}
	If $\lambda \le \mu \in \Part{[n]}$, then there is an isomorphism of perverse sheaves:
	\[(\omega^\mu_\lambda)^* \cB_\mu \cong \cB_\lambda.\]
	In particular, we have $(\omega_\lambda)^*\cF_n \cong \cB_\lambda$.	
	
	Moreover, this isomorphism can be made canonical in the following sense. Given two factorization maps $\omega_1,\omega_2 \colon S^\lambda X \to S^\mu X$, there is a commutative diagram of isomorphisms
	\[\xymatrix{
		\omega_1^* \cB_\mu \ar[rr]^\cong \ar[dr]_\cong && \omega_2^* \cB_\mu \ar[dl]^\cong \\
		& \cB_\lambda  & 
	}\]
	where the top isomorphism is induced by the canonical isomorphism of functors of Lemma~\ref{lem:omega pullback}.
\end{lemma}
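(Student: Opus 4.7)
The plan is to lift $\omega^\mu_\lambda$ to a map of Hilbert schemes making a Cartesian square with $\rho_\lambda$ and $\rho_\mu$, and then conclude by proper base change. Writing $\omega^\mu_\lambda = \sigma^\mu_\lambda\circ \psi_\lambda$ with $\psi_\lambda$ the $(\lambda,\mu)$-separated map determined by embeddings $\phi_{[P]} \in \Emb(X)$, one per part $P$ of $\lambda$, I would define $\tilde\omega^\mu_\lambda\colon \Hilb^\lambda X \to \Hilb^\mu X$ by first transporting each ideal $I_P$ through $\phi_{[P]}$ and then, for each $\mu$-part $Q \supseteq P$, multiplying the resulting ideals inside $\cO(X)$.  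The $(\lambda,\mu)$-separatedness guarantees that the bubbles $\overline{\phi_{[P]}(X)}$ for $P \subset Q$ are pairwise disjoint, so each product is a well-defined colength-$|Q|$ ideal by the Chinese Remainder Theorem.

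The main geometric step is then to verify that the resulting square
\[\xymatrix{
\Hilb^\lambda X \ar[r]^{\tilde\omega^\mu_\lambda}\ar[d]_{\rho_\lambda} & \Hilb^\mu X \ar[d]^{\rho_\mu} \\
S^\lambda X \ar[r]^{\omega^\mu_\lambda} & S^\mu X
}\]
is Cartesian.  This amounts to the statement that any finite-colength ideal of $\cO(X)$ supported on a configuration lying in disjoint bubbles factors uniquely as a product of ideals on each bubble, again by the Chinese Remainder Theorem. Granting Cartesianness, $\tilde\omega^\mu_\lambda$ is an open embedding onto $\rho_\mu^{-1}(\omega^\mu_\lambda(S^\lambda X))$, and proper base change for the proper map $\rho_\mu$ immediately yields
\[(\omega^\mu_\lambda)^*\cB_\mu \,=\, (\omega^\mu_\lambda)^*(\rho_\mu)_*\uk_{\Hilb^\mu X}[2n] \,\cong\, (\rho_\lambda)_*(\tilde\omega^\mu_\lambda)^*\uk_{\Hilb^\mu X}[2n] \,=\, \cB_\lambda.\]
The ``in particular'' statement $(\omega_\lambda)^*\cF_n \cong \cB_\lambda$ is the case $\mu = (n)$.

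For canonicity, I would repeat the construction in a family over $\Emb_\mu(X,\lambda)$.  The universal factorization map $\Omega\colon S^\lambda X \times \Emb_\mu(X,\lambda) \to S^\mu X$ used in the proof of Lemma \ref{lem:omega pullback} lifts by the same pointwise formula to a map $\tilde\Omega$, and the corresponding family square remains Cartesian.  Proper base change combined with the Künneth formula then produces a global isomorphism
\[\Omega^*\cB_\mu \,\cong\, \cB_\lambda \boxtimes \uk_{\Emb_\mu(X,\lambda)},\]
whose fibers over $\psi_i \in \Emb_\mu(X,\lambda)$ recover the single-map isomorphisms $\omega_i^*\cB_\mu \cong \cB_\lambda$.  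The canonical isomorphism of Lemma \ref{lem:omega pullback} is by construction the identification of fibers of $\Omega^*\cB_\mu$ given by simple connectedness via Lemma \ref{lem:stratified homotopy}; since the local-system factor $\uk_{\Emb_\mu(X,\lambda)}$ has trivial monodromy, this identification intertwines the two proper-base-change isomorphisms with $\cB_\lambda$, so the claimed triangle commutes.  The principal technical hurdle will be establishing Cartesianness, which is a commutative-algebra statement about ideals with disjoint support; the rest of the argument is standard manipulation of base change and Künneth formulas.
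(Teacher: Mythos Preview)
Your overall strategy---build a Cartesian square lifting $\omega^\mu_\lambda$ to the Hilbert schemes, apply proper base change, then repeat in a family over $\Emb_\mu(X,\lambda)$ for canonicity---is exactly what the paper does.  There is, however, a real gap in your construction of the lift $\tilde\omega^\mu_\lambda$.

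You propose to ``transport each ideal $I_P$ through $\phi_{[P]}$.''  But the embeddings $\phi_{[P]}\in\Emb(X)$ are of the form $c^{-1}j_{p,r}c$, where $c\colon \C^2\to B_1(0)$ is a diffeomorphism onto the unit ball.  By Liouville's theorem there is no biholomorphism from $\C^2$ to a bounded domain, so $c$ (hence $\phi_{[P]}$) is necessarily non-holomorphic.  A smooth diffeomorphism does not act on ideals of $\cO(X)=\C[x,y]$ or on zero-dimensional subschemes, so your $\tilde\omega^\mu_\lambda$ is not defined, and the square you write down does not exist.

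The paper avoids this by separating the two ingredients of $\omega^\mu_\lambda=\sigma^\mu_\lambda\circ\psi_\lambda$.  First it builds the Cartesian square purely on the \emph{algebraic} side: over the open image $W=\omega^\mu_\lambda(S^\lambda X)$ one has $\rho_\mu^{-1}(W)\cong\Hilb^{\hat\lambda_1}V_1\times\cdots\times\Hilb^{\hat\lambda_r}V_r$ for the disjoint bubbles $V_i=\phi_i(X)\subset X$ (this is the Chinese Remainder statement you invoke, and it is legitimate because each $\Hilb^{\hat\lambda_i}V_i$ is simply an open subvariety of $\Hilb^{\hat\lambda_i}X$).  Base change in this square yields $(\sigma^\mu_\lambda)^*\cB_\mu|_{\psi_\lambda(S^\lambda X)}\cong\cB_\lambda|_{\psi_\lambda(S^\lambda X)}$.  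Only then does the paper apply $\psi_\lambda^*$, using Lemma~\ref{lem:psi isom id} (that $\psi_\lambda^*$ is canonically the identity on $\cS$-constructible sheaves) to conclude $(\omega^\mu_\lambda)^*\cB_\mu=\psi_\lambda^*(\sigma^\mu_\lambda)^*\cB_\mu\cong\psi_\lambda^*\cB_\lambda\cong\cB_\lambda$.  The family version for canonicity follows the same pattern.  Once you split off $\psi_\lambda$ and handle it via Lemma~\ref{lem:psi isom id} rather than trying to lift it to the Hilbert scheme, your argument goes through and matches the paper's.
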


\begin{proof}
	It is enough to consider the case $\mu = (n)$, since $\omega_\lambda^\mu$ is a product of maps $\omega_{\nu_i}\colon S^{\nu_i}X \to S^{\hat\lambda_i}X$, where the partition $\nu_i \in \Part{[\hat\lambda_i]}$ has parts which are the same size as the parts of $\mu$ contained in the $i$th part of $\lambda$.
	
	Let $\psi\colon S^\lambda X \to S^\lambda X$ be the $\lambda$-separated map such that $\omega_\lambda = \sigma_\lambda \circ \psi$, and let $W = \omega_\lambda(S^\lambda X)$.  There are disjoint open subsets $V_1, \dots, V_r \subset X$ so that $W$ is the locus of points $x = \sum_{i=1}^n p_i$ for which $\hat\lambda_1$ of the $p_i$ are in $V_1$, $\hat\lambda_2$ are in $V_2$, and so on.   
    The local nature of the Hilbert scheme implies that there is a fiber square

\[\xymatrix{\Hilb^{\hat\lambda_1} V_1 \times \dots \times \Hilb^{\hat\lambda_r} V_r \ar[r]^>>>>>\sim\ar[d]^{\rho_\lambda} & \rho_n^{-1}(W) \ar[d]^{\rho_n} \\
	\psi(S^\lambda X) = S^{\hat\lambda_1} V_1 \times \dots \times S^{\hat\lambda_r}V_r \ar[r]_>>>>>>>>\sim^>>>>>>>>{\sigma_\lambda} & W 
}\]
    Since $\rho_\lambda$ and $\rho_n$ are proper, base change gives an isomorphism
	\[(\omega_\lambda)^*\cF_n = \psi^*(\sigma_\lambda)^*\cF_n\cong \psi^*\cB_\lambda \cong \cB_\lambda,\]
	where the last isomorphism holds by Lemma \ref{lem:psi isom id}.
	
To prove the canonicity of this isomorphism, let $E = \Emb_n(X,\lambda)$ and consider the map $\bar\Omega \colon S^\lambda X\times E \to S^nX \times E$ whose first factor is the universal factorization map $\Omega$ as in the proof of Lemma \ref{lem:omega pullback}, and whose second factor is the projection onto $E$.  Similarly, let $\bar\Psi\colon S^\lambda X \times E \to S^\lambda X \times E$ be the product of
the universal $\lambda$-separated map of Lemma \ref{lem:psi isom id} (restricted to $E$) and the projection onto $E$.  The image $\bar\Omega(S^\lambda\times E)$ is an open subset $\cW$ of $S^n\times E$. Then the fiber square above can be constructed for every $\lambda$-separated map $\psi\in E$, as a family over $E$:

\[\xymatrix{\Hilb^{\hat\lambda_1} \cV_1 \times_E \dots \times_E \Hilb^{\hat\lambda_r} \cV_r \ar[r]^>>>>>\sim\ar[d]^{\rho_\lambda} & (\rho_n\times \id_E)^{-1}(\cW) \ar[d]^{\rho_n\times \id_E} \\
	\bar\Psi(S^\lambda X) = S^{\hat\lambda_1} \cV_1 \times_E \dots \times_E S^{\hat\lambda_r}\cV_r \ar[r]_>>>>>>>>>\sim^>>>>>>>>>{\sigma_\lambda \times \id_E} & \cW 
}\]
Here $\cV_1,\dots,\cV_r\subset X \times E$ are the corresponding families of open sets, and $\Hilb^{\hat\lambda_r} \cV_r$ is the union of Hilbert schemes of each open set in the family. 
Now base change gives
\[\Omega^*\cF_n = \bar\Omega^*(\cF_n \boxtimes \underline\kk_E) = \bar\Psi^*(\sigma_\lambda \times \id_E)^*\cF_n \cong \bar\Psi^*(\cB_\lambda \boxtimes \underline{\kk}_E)\cong \cB_\lambda\boxtimes \underline\kk_E.\]
The result follows from this, because the isomorphism of Lemma \ref{lem:omega pullback} is obtained from applying Lemma \ref{lem:stratified homotopy} to the left hand side, while applying it to $\cB_\lambda\boxtimes \underline\kk_E$ gives the identity.
\end{proof}

%%%%%
\section{Defining the homomorphism} \label{sec:hom}
%%%%%
In this section we define a ring homomorphism
\[\Theta\colon R_{[n]}\otimes_\Z \kk \to E_{[n]} := \End(\oplus_{\lambda \in \Part{[n]}} \cF_\lambda).\]  To do this, we describe the images of the 
generators ${}_\lambda\bone_\mu$ and $z_*$ and show that they satisfy the relations given by Theorem \ref{thm: presenting Rn}.

Take two partitions $\lambda, \mu \in \Part{[n]}$ with $\lambda \le \mu$.  We have the quotient map $\sigma_\lambda^\mu\colon S^\lambda X \to S^\mu X$ and choose a $(\lambda,\mu)$-separated map $\psi:S^\lambda X \to S^\lambda X$ with corresponding factorization map $\omega_\lambda^\mu = \sigma_\lambda^\mu \circ \psi \colon S^\lambda X \to S^\mu X$. By Theorem \ref{thm-biadjoint},
$(\omega_\lambda^\mu)^*$ and $(\sigma_\lambda^\mu)_*$ are biadjoint functors between $\Perv_\cS(S^\lambda X,\kk)$ and $\Perv_\cS(S^\mu X,\kk)$.  In particular, we have the unit morphism $\cB_\mu \to (\sigma_\lambda^\mu)_*(\omega_\lambda^\mu)^*\cB_\mu$.

Applying $(\sigma_\mu)_*$ to this, and using the isomorphism $(\omega_\lambda^\mu)^*\cB_\mu \cong \cB_\lambda$ given by Lemma \ref{lem:omega* B} and the fact that
$\sigma_\mu \sigma_\lambda^\mu = \sigma_\lambda$, we get a morphism
\[_\lambda 1_\mu\colon \cF_\mu = (\sigma_\mu)_*\cB_\mu \to (\sigma_\lambda)_*(\omega_\lambda^\mu)^*\cB_\mu = (\sigma_\lambda)_*\cB_\lambda = \cF_\lambda.\]

\begin{lemma}
The morphism $_\lambda 1_\mu: \cF_\mu \to \cF_\lambda$ is independent of the choice of factorization map $\omega:S^\lambda X \to S^\mu X$.
\end{lemma}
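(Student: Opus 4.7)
My approach is to unwind the definition of ${}_\lambda 1_\mu$ using the explicit formula for the unit morphism provided by Corollary~\ref{cor:unit}, and then to match the two constructions via the canonical isomorphisms of Lemma~\ref{lem:omega pullback} and Lemma~\ref{lem:omega* B}. Let $\omega_1, \omega_2 \colon S^\lambda X \to S^\mu X$ be two factorization maps over $\sigma := \sigma_\lambda^\mu$, and for $i = 1,2$ let $\eta_i \colon \cB_\mu \to \sigma_* \omega_i^* \cB_\mu$ denote the unit of the adjunction $(\omega_i^*, \sigma_*)$ applied to $\cB_\mu$.

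By Corollary~\ref{cor:unit}, each $\eta_i$ factors as
\[\cB_\mu \to \sigma_* \sigma^* \cB_\mu \to \sigma_* \, {}^p\!j_*j^*\sigma^* \cB_\mu \xrightarrow{\;\sim\;} \sigma_* \omega_i^* \cB_\mu,\]
where the first two morphisms are the units of $(\sigma^*,\sigma_*)$ and $(j^*,{}^p\!j_*)$ and do not depend on $\omega_i$, while the final isomorphism is the canonical one of Corollary~\ref{cor:unit}. The key point is that this final isomorphism is obtained by applying Lemma~\ref{lem:stratified homotopy} to the simply-connected parameter space $\Emb_\mu(X,\lambda)$ (Lemma~\ref{lem:Emb is simply connected}), so the isomorphisms for $i=1$ and $i=2$ are compatible in the following sense: composing the one for $i=1$ with the inverse of the one for $i=2$ reproduces the canonical isomorphism $\omega_1^* \simeq \omega_2^*$ of Lemma~\ref{lem:omega pullback}. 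Consequently, $\eta_1$ and $\eta_2$ are identified under the induced isomorphism $\sigma_* \omega_1^* \cB_\mu \simeq \sigma_* \omega_2^* \cB_\mu$.

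To conclude, I would apply $(\sigma_\mu)_*$ and compose with the isomorphism $\omega_i^*\cB_\mu \cong \cB_\lambda$ of Lemma~\ref{lem:omega* B}. The canonicity statement in Lemma~\ref{lem:omega* B} asserts precisely that the two identifications $\omega_1^*\cB_\mu \cong \cB_\lambda$ and $\omega_2^*\cB_\mu \cong \cB_\lambda$ agree after composing with the canonical isomorphism $\omega_1^*\cB_\mu \cong \omega_2^*\cB_\mu$ produced by Lemma~\ref{lem:omega pullback}. Combined with the preceding paragraph, this yields that the two candidate morphisms ${}_\lambda 1_\mu \colon \cF_\mu \to \cF_\lambda$ coming from $\omega_1$ and $\omega_2$ coincide. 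The argument is therefore purely a matter of bookkeeping; the substantive input is that every ``canonical'' isomorphism in the construction arises from Lemma~\ref{lem:stratified homotopy} applied to the same simply connected space $\Emb_\mu(X,\lambda)$, which forces the various identifications to be coherent. The only mild obstacle is checking that Corollary~\ref{cor:unit} was indeed formulated compatibly with Lemma~\ref{lem:omega* B}, but this is immediate from their proofs.
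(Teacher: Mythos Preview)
Your argument is correct and essentially identical to the paper's: both factor the unit through the $\omega$-independent object $(\sigma_\lambda)_* \,{}^p\!j_*j^*(\sigma_\lambda^\mu)^*\cB_\mu$ via Corollary~\ref{cor:unit}, then use the canonicity of the isomorphism $\omega_i^* \simeq {}^p\!j_!j^*\sigma^*$ (left triangle) and the canonicity statement of Lemma~\ref{lem:omega* B} (right triangle) to conclude. The paper organizes this as a commutative diamond diagram, but the content is the same.
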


\begin{proof}
Suppose $\omega_1$ and $\omega_2$ are factorization maps $\omega_i:S^\lambda X \to S^\mu X$.  By Corollary~\ref{cor:unit}, the unit of the adjunctions can be expressed as 
\[ 1_{\Perv_\cS(S^\mu X)} \to (\sigma_\lambda^\mu)_* {}^p\! j_* j^*  (\sigma_\lambda^\mu)^* \cong (\sigma_\lambda^\mu)_*(\omega_i)^*.\]
Thus the morphisms $_\lambda 1_\mu$ defined above for $\omega_1$ and $\omega_2$ are respectively the top and bottom paths in the diagram:
\[\xymatrix{
& & (\sigma_\lambda)_* (\omega_1)^* \cB_\mu \ar[dr]^\cong \ar[dd]^\cong &\\
\cF_\mu = (\sigma_\mu)_*\cB_\mu \ar[r] & (\sigma_\lambda)_* (  {}^p j_* j^*  (\sigma_\lambda^\mu)^* \cB_\mu ) \ar[ur]^\cong \ar[dr]_\cong & & (\sigma_\lambda)_* \cB_\lambda = \cF_\lambda. \\
& & (\sigma_\lambda)_* (\omega_2)^* \cB_\mu \ar[ur]_\cong &
 }
 \]
 The triangle on the right is commutative by Lemma~\ref{lem:omega* B} and the one on the left is commutative by the canonicity of the isomorphism of functors from Corollary~\ref{cor:unit}.
\end{proof}

Having obtained a well-defined morphism $_\lambda 1_\mu: \cF_\mu \to \cF_\lambda$, we also get a morphism $_\mu 1_\lambda\colon \cF_\lambda \to \cF_\mu$ by duality, and we define $\Theta({}_\lambda\bone_\mu) = {}_\lambda 1_\mu$ and  $\Theta({}_\mu\bone_\lambda) = {}_\mu 1_\lambda$.

Next, take a permutation $z\in \fS_n$.  Using $z$ to permute the coordinates defines a map 
$X^n \to X^n$ which descends to a well-defined map
$z\colon S^\lambda X \to S^{z\lambda}X$ for any partition $\lambda$. 
We have a commutative square
\[
\xymatrix{
\Hilb^\lambda\! X  \ar[r]^z\ar[d]^{\rho_\lambda} & \Hilb^{z\lambda}\! X \ar[d]^{\rho_{z\lambda}}\\
S^\lambda X \ar[r]^z & S^{z\lambda} X\\
}\]
which induces an isomorphism $z_*\cB_\lambda \stackrel{\sim}{\longrightarrow} \cB_{z\lambda}$.   Applying $\sigma_{\lambda *}$ and using the fact that $\sigma_\lambda z = \sigma_\lambda$ gives a homomorphism
\[{}_{z\lambda}(z_*)_\lambda\colon \cF_\lambda \to \cF_{z\lambda}.\]
Note that even if $z\lambda = \lambda$, this map might not be the identity.
We define $\Theta(z_*) \in E_{[n]}$ to be the sum of these homomorphisms over all partitions $\lambda\in P([n])$.

\begin{theorem}\label{thm:main isom}
	This definition gives an isomorphism $R_{[n]}\otimes_\Z \kk \xrightarrow{\sim} E_{[n]}$.  Restricting to one representative for each integer partition, we get an isomorphism $R_n \cong \End(\oplus_{\lambda\in \Part{n}} \cF_\lambda)$.
\end{theorem}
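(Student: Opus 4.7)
The plan is to prove the theorem in two steps: first verify that $\Theta$ is a well-defined ring homomorphism using the presentation from Theorem~\ref{thm: presenting Rn}, and then show bijectivity by reducing to the base case of Theorem~\ref{thm-spr}.

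For well-definedness, I would check that the relations (a)--(d) of Theorem~\ref{thm: presenting Rn} hold in $E_{[n]}$ after applying $\Theta$. Relations (a) and (b) are essentially formal: (a) follows from the coherence of units of biadjunctions combined with Lemma~\ref{lem:omega* B}, which makes the isomorphisms $(\omega_\lambda^\mu)^*\cB_\mu \cong \cB_\lambda$ compatible under composition of factorization maps; (b) is the functoriality of permutation-induced pushforwards. Relation (c) follows from straightforward commutative diagrams relating the $\fS_n$-action to the symmetrization and factorization maps. The crucial relation is (d), for which I would use the following Cartesian-type decomposition: when $\lambda, \mu \le \nu$, the preimage of $\omega_\lambda^\nu(S^\lambda X)$ under $\sigma_\mu^\nu$ decomposes as a disjoint union of open subsets indexed by double coset representatives $z \in \ZZ{\mu}{\nu}{\lambda}$, with the $z$-component identified (via the permutation $z$) with the image of a factorization map $\omega_{\mu\wedge z\lambda}^\mu\colon S^{\mu\wedge z\lambda} X \to S^\mu X$. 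Applying base change along this decomposition, together with the definition of $\Theta$ on the generators, yields the sum formula of relation (d).

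For bijectivity, I would first establish the special case $\lambda = \mu = (n)$, that is Theorem~\ref{thm-spr}, by computing $\End(\cF_n) \simeq \Phi_n(\cF_n) = H^*(\rho_n^{-1}(n \cdot 0); \kk)$ and identifying its algebra structure (induced via the factorizations $\cF_n \to \cF_\mu \to \cF_n$) with $R_\C(\fS_n)\otimes \kk$. The general case then reduces to this base case: by Theorem~\ref{thm-biadjoint}, $\Hom(\cF_\mu, \cF_\lambda) \cong \Hom(\cB_\mu, (\omega_\mu)^* (\sigma_\lambda)_* \cB_\lambda)$, and the same base-change decomposition used for relation (d) presents $(\omega_\mu)^* (\sigma_\lambda)_* \cB_\lambda$ as a direct sum over double cosets $[w] \in \fS_\mu \backslash \fS_n / \fS_\lambda$. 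Each summand reduces, after a further biadjunction and using the K\"unneth decomposition $\cB_\mu = \cF_{\hat\mu_1} \boxtimes \cdots \boxtimes \cF_{\hat\mu_s}$, to a product of endomorphism spaces $\End(\cF_k)$ covered by the base case, producing a natural basis parametrized by pairs of a double coset representative and an $\fS_{\mu\wedge w\lambda}$-orbit of subpartitions. This matches exactly the $\Z$-basis of ${}_\mu R_\lambda$ from Theorem~\ref{thm: presenting Rn}, and tracing through the construction shows $\Theta$ sends one basis to the other.

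The main obstacle is the base case: identifying the cohomology of the punctual Hilbert scheme, with the ring structure coming from our factorization operators, as $R_\C(\fS_n)\otimes\kk$ over an arbitrary field $\kk$. Over $\C$ this follows from classical results, but for general $\kk$ one must construct an integral family of operators whose structure constants are those of the representation ring; the strategy would be to exhibit for each partition $\mu$ of $n$ the element $[V_\mu] := {}_{(n)}\mathbf{1}_\mu \cdot {}_\mu \mathbf{1}_{(n)} \in \End(\cF_n)$ and verify that these satisfy the multiplication rules of $R_\C(\fS_n)$ via a direct geometric analysis of the corresponding compositions using the Hilbert--Chow morphism.
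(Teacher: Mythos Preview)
Your overall architecture matches the paper: check relations (a)--(d) to get a homomorphism, use the Cartesian diagram \eqref{eqn:Cartesian diagram} to reduce general $\Hom(\cF_\mu,\cF_\lambda)$ to the base case $\End(\cF_n)$, and invoke the dimension count of Proposition~\ref{prop:dimensions agree}. The reduction step you describe is essentially Lemma~\ref{lem:generating Hom(Fmu,Fla)}.

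The gap is in your treatment of the base case. Once $\Theta$ is a ring homomorphism, the elements ${}_{(n)}1_\mu\cdot{}_\mu 1_{(n)}$ automatically satisfy the multiplication rules of $R_\C(\fS_n)$; verifying those rules geometrically is redundant and does not by itself yield bijectivity. What you actually need is that these $p(n)$ elements are \emph{linearly independent} in $\End(\cF_n)$ over any field $\kk$, and your proposal does not say how to get this. The paper's argument is more delicate than you suggest: it computes the action of $\Phi_n({}_\lambda 1_{(n)})$ on an explicit basis $\{\alpha_\mu\}$ of $\Phi_n(\cF_n)\cong H^{2n}_Z(\Hilb^n X;\kk)$ indexed by irreducible components $Z_\mu$ of the Lagrangian $Z=\rho_n^{-1}(S^n\Sigma)$ (Proposition~\ref{prop:action on basis}), obtaining a triangular formula with respect to refinement. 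To conclude, one must then pass to the Verdier-dual basis $\{\bar\alpha_\mu\}$ coming from the opposite Lagrangian $\bar Z$, and the crucial nonvanishing input is the transversality of $Z_{(n)}$ and $\bar Z_{(1^n)}$ at a smooth point (Lemma~\ref{lem:dual basis}). This integral geometric fact is what makes the linear independence hold in every characteristic, and it is absent from your sketch.
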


The rest of this section will be devoted to showing that $\Theta$ is a well-defined ring homomorphism.  In the following section we will prove that it is an isomorphism.

%%%%%
\subsection{Checking the relations I: permutations}
%%%%%

The easiest of our relations to check are the relations (b) and (c)  of Theorem \ref{thm: presenting Rn} involving the action of permutations $z \in \fS_n$.  The relations (b) follow from the obvious fact that the maps $S^\lambda X \to S^{z\lambda}X$ and $\Hilb^{\lambda}\! X \to \Hilb^{z\lambda}\! X$ induced by permutations define actions of $\fS_n$ on $\coprod_\lambda S^\lambda X$ and $\coprod_\lambda \Hilb^{\lambda}\! X$, such that the projection $\coprod_\lambda \rho_\lambda$ is $\fS_n$-equivariant.

To see the first part of the relation (c), suppose that $\lambda \le \mu$, and consider the diagram
\[
\xymatrix{S^\lambda X \ar[r]^z\ar@<.5ex>[d]^\sigma\ar@<-.5ex>[d]_\omega & S^{z \lambda}X \ar@<.5ex>[d]^\sigma\ar@<-.5ex>[d]_\omega \\
	S^\mu X \ar[r]^z & S^{z\mu} X
}
\] 
where we have omitted the sub/superscripts for clarity.  Note that we have two commutative squares: $z\omega = \omega z$ and $z\sigma = \sigma z$.  The fact that the maps $z$ are isomorphisms implies that $z_*$ intertwines the adjunctions $(\omega^*,\sigma_*)$ on the left and right.  In particular, for any $\cA \in \Perv_\cS(S^\mu X,\kk)$, applying $z_*$ to the unit $\cA \to \sigma_*\omega^* \cA$
gives the unit $z_*\cA \to \sigma_*\omega^* z_*\cA$ for $z_*\cA \in \Perv(S^{z\mu},\kk)$.
Applying this to $\cA = \cB_\mu$ implies that 
\[z_* \cdot {}_\lambda 1_\mu = {}_{z\lambda}1_{z\mu} \cdot z_*.\]

Next, suppose that $z\in \fS_\mu$.  Then $z\colon S^\mu X \to S^\mu X$ is the identity map, and the induced homomorphism $z_*\cB_\mu \to \cB_\mu$ is also the identity.  Pushing forward by $\sigma_\mu$, we see that ${}_{z\mu}(z_*)_\mu\colon \cF_\mu \to \cF_{z\mu} = \cF_\mu$ is the identity, and so we have  
${}_{\lambda}1_\mu \cdot z_* = {}_{\lambda}1_\mu$.  Similarly, if $z \in \fS_\lambda$, the homomorphism ${}_{z\lambda}(z_*)_\lambda\colon \cF_\lambda \to \cF_{z\lambda} = \cF_\lambda$ is the identity, and so $z_*\cdot {}_\lambda 1_\mu =  {}_\lambda 1_\mu$.

%%%%%
\subsection{Checking the relations II: up/up and down/down compositions}
%%%%%
Take partitions $\lambda,\mu,\nu \in \Part{[n]}$ with $\lambda\le\mu\le \nu$.  Then we have relative symmetrization and factorization maps
\[
\xymatrix{
S^\lambda X \ar@<.5ex>[r]^{\sigma_\lambda^\mu}\ar@<-.5ex>[r]_{\omega_\lambda^\mu} & S^\mu X \ar@<.5ex>[r]^{\sigma_\mu^\nu}\ar@<-.5ex>[r]_{\omega_\mu^\nu} & S^\nu X.
}\]
Note that the symmetrization map $\sigma_\lambda^\nu \colon S^\lambda X\to S^\nu X$ is obtained as the composition: 
\[\sigma_\lambda^\nu = \sigma_\mu^\nu ~\sigma_\lambda^\mu.\]

%\carltodo{The argument given below seems very indirect (instead of showing that the composition agrees with the unit, instead we claim that that is equivalent to the composition of adjunctions agreeing, which is then equivalent to the composition of counits being the same...).  Maybe the following is better?
%If the unit of the adjunction $(\omega^*,\sigma_*)$ can be expressed as
%\[ 1_{\Perv_\cS(S^n X)} \to \sigma_* \sigma^* \to \sigma_* {}^p j_* j^*  \sigma^* \simeq \sigma_*\omega^*,\]
%then we just need to show that the unit map 
%\[ 1_{\Perv_\cS(S^n X)} \to (\sigma_\lambda)_* {}^p j_{\lambda*} j_\lambda^*  (\sigma_\lambda)^*,\]
%agrees with the composition of unit maps:
%\[ 1_{\Perv_\cS(S^n X)} \to (\sigma_\mu)_* {}^p j_{\mu*} j_\mu^*  (\sigma_\mu)^*\to (\sigma_\mu)_* (\sigma_\lambda^\mu)_* {}^p (j_\lambda)_* (j_\lambda)^* (\sigma_\lambda^\mu)^* {}^p (j_\mu)_* (j_\mu)^*  (\sigma_\mu)^*,\]
%Here $j_\lambda$ and $j_\mu$ are the inclusions of the appropriate open subsets of $S^\lambda X$ and $S^\mu X$ respectively.
%As the image of $j_\lambda$ under $\sigma_\lambda^\mu$ is contained in the image of $j_\mu$, the commutative diagram gives the desired result.
%}

Similarly, the composition $\omega_\mu^\nu ~\omega_\lambda^\mu$ is also a factorization map $\omega_\lambda^\nu \colon S^\lambda X\to S^\nu X$ for an appropriate choice of $(\lambda,\nu)$-separated map.
To be more precise, let the factorization maps $\omega_\mu^\nu$ and $\omega_\lambda^\mu$ be defined as $\omega_\mu^\nu = \sigma_\mu^\nu \psi_\mu$ and $\omega_\lambda^\mu = \sigma_\lambda^\mu \psi'_\lambda$ where
$\psi_\mu\colon S^\mu X \to S^\mu X$ is the map induced by a $(\mu,\nu)$-separated map $\psi:X^n \to X^n$ and $\psi'_\lambda\colon S^\lambda X \to S^\lambda X$ is the map induced by a $(\lambda,\mu)$-separated map $\psi':X^n \to X^n$.  

Then the composition $\psi'' := \psi \psi' : X^n \to X^n$ is $(\lambda,\nu)$-separated and we have
\begin{equation}\label{eqn:composing omegas}
\omega_\mu^\nu ~\omega_\lambda^\mu = 
\sigma_\mu^\nu \psi_\mu \sigma_\lambda^\mu \psi'_\lambda = \sigma_\mu^\nu \sigma_\lambda^\mu \psi_\lambda \psi'_\lambda = \sigma_\lambda^\nu \psi''_\lambda,
\end{equation}
where $\psi_\lambda: S^\lambda X \to S^\lambda X$ and $\psi''_\lambda: S^\lambda X \to S^\lambda X$ are the maps induced by $\psi$ and $\psi''$ respectively.

In particular, we conclude that the composition $\omega_\mu^\nu ~\omega_\lambda^\mu$ is the factorization map $\omega_\lambda^\nu := \sigma_\lambda^\nu \psi''_\lambda$.

Thus the relation ${}_\lambda 1_{\mu} \cdot {}_\mu 1_\nu = {}_\lambda 1_\nu$ will follow if we can show that the unit for the adjunction $((\omega_\lambda^\nu)^*, (\sigma_\lambda^\nu)_*)$ is equal to the composition of the units of the adjunctions $((\omega_\mu^\nu)^*, (\sigma_\mu^\nu)_*)$ and $((\omega_\lambda^\mu)^*, (\sigma_\lambda^\mu)_*)$.  This is equivalent to showing that the counit is the composition of counits, which is what we will prove.

Recall from Corollary \ref{cor:counit formula}) that the counit of the adjunction $((\omega_\mu^\nu)^*,(\sigma_\mu^\nu)_*)$ is the composition
 \[(\omega_\mu^\nu)^*(\sigma_\mu^\nu)_* = (\psi_\mu)^*(\sigma_\mu^\nu)^*(\sigma_\mu^\nu)_* \to (\psi_\mu)^* \to 1_{\Perv_{\cS}(S^\mu X,\kk)}\]
 of the counit of $((\sigma_\mu^\nu)^*, (\sigma_\mu^\nu)_*)$
with the natural isomorphism $(\psi_\mu)^* \stackrel{\sim}\longrightarrow 1_{\Perv_{\cS}(S^\mu X,\kk)}$ given by Lemma \ref{lem:psi isom id}.  The counit of 
$((\omega_\lambda^\mu)^*, (\sigma_\lambda^\mu)_*)$ is expressed similarly.
The fact that the counit $(\omega_\lambda^\nu)^*(\sigma_\lambda^\nu)_* \to 1_{\Perv_\cS(S^\lambda X,\kk)}$ is isomorphic to the composition
\[(\omega_\lambda^\nu)^*(\sigma_\lambda^\nu)_* = (\omega_\lambda^\mu)^*(\omega_\mu^\nu)^*(\sigma_\mu^\nu)_*(\sigma_\lambda^\mu)_* \to (\omega_\lambda^\mu)^*(\sigma_\lambda^\mu)_* \to 1_{\Perv_\cS(S^\lambda X,\kk)}\]
of counits
now follows, using the expression \eqref{eqn:composing omegas} for $\omega_\lambda^\nu$, the fact that the adjunction $((\sigma_\lambda^\nu)^*,(\sigma_\lambda^\nu)_*)$ is the composition of the adjunctions $((\sigma_\mu^\nu)^*,(\sigma_\mu^\nu)_*)$ and $((\sigma_\lambda^\mu)^*,(\sigma_\lambda^\mu)_*)$, and the fact that the natural isomorphism $(\psi''_\mu\psi''_\lambda)^* \stackrel{\sim}\longrightarrow 1_{\Perv_{\cS}(S^\mu X,\kk)}$ is the composition of the isomorphisms for $\psi''_\mu$ and $\psi''_\lambda$.

That concludes the proof of the relation ${}_\lambda 1_{\mu} \cdot {}_\mu 1_\nu = {}_\lambda 1_\nu$. The opposite relation ${}_\nu 1_{\mu} \cdot {}_\mu 1_\lambda = {}_\nu 1_\lambda$ then follows by duality.

%%%%%
\subsection{A Cartesian diagram}
%%%%%

Let $\lambda$, $\mu$ be two partitions of the set $[n]$. In this section, we introduce a key Cartesian diagram which 
connects the problem of computing $\Hom(\cF_\lambda, \cF_\mu)$ with the combinatorics of double cosets.

Our diagram takes the form
\begin{equation}\label{eqn:Cartesian diagram}
\begin{gathered}
\xymatrix@R+10pt@C+25pt{\coprod_z S^{\lambda\wedge z\mu}X \ar[r]^>>>>>>>>{\coprod\hat\omega_z}
	\ar[d]^{\coprod\sigma^\lambda_{\lambda\wedge z \mu}}
	&  S^\mu X \ar[d]^{\sigma_\mu} \\
	S^\lambda X \ar[r]^{\omega_\lambda=\sigma_\lambda \psi_\lambda}  & S^n X}
\end{gathered}
\end{equation}
where $z \in \fS_n$ runs over a set of representatives of double cosets $\fS_\lambda\backslash \fS_n / \fS_\mu$.
Intuitively, $\omega_\lambda$ sends the coordinates indexed by different $\sim_\lambda$-equivalence classes into disjoint disks in $X$, and to find a point in $S^\mu X$ over its image one must ``color" these points according to the parts of $\mu$.  The combinatorially distinct ways to do this are given exactly by the double cosets.  

The maps $\hat\omega_z$ on the top can be described as the composition
\[\xymatrix{
	S^{\lambda\wedge z\mu}X \ar[r]^{\psi_{\lambda\wedge z\mu}} &
	S^{\lambda\wedge z\mu}X \ar[r]^{z^{-1}} &
	S^{z^{-1}\lambda\wedge \mu}X \ar[r]^>>>>>{\sigma} & 
	S^\mu X
}\]
where $\psi_{\lambda\wedge z\mu} \colon S^{\lambda\wedge z\mu}X \to S^{\lambda\wedge z\mu}X$ denotes the map induced by the same underlying map $\psi\colon X^n \to X^n$ as $\psi_\lambda\colon S^\lambda X\to S^\lambda X$, and we omit the sub- and superscript on $\sigma$ for clarity.  This can be rewritten as
\[\xymatrix{
	S^{\lambda\wedge z\mu}X \ar[r]^{z^{-1}} &
	S^{z^{-1}\lambda\wedge \mu}X \ar[r]^{\psi'} &
	S^{z^{-1}\lambda\wedge \mu}X \ar[r]^>>>>>{\sigma} & 
	S^\mu X
}\]
where $\psi' = z^{-1}\psi_{\lambda\wedge z\mu} z$.  

The map $\psi'$ is $(z^{-1}\lambda\wedge\mu,\mu)$-separated, so we have $\hat\omega_z = \omega_{z^{-1}\lambda\wedge\mu}^\mu \circ z^{-1}$.
In particular this means that $(\hat\omega_z)^*$ preserves $\cS$-constructibility and is biadjoint with $\sigma_*(z^{-1})_*$ on perverse sheaves.

\begin{remark}
	Note that if we removed the $\psi$ factors from both horizontal maps, the square would still commute, but it would no longer be Cartesian.
\end{remark}

Our first application of this diagram is to computing the dimension of hom-spaces between our projective/injective sheaves.

\begin{proposition}\label{prop:dimensions agree}
	For any $\lambda, \mu \in \Part{[n]}$, we have
	\[\dim_\kk \Hom(\cF_\mu,\cF_\lambda) = \rank ({}_\lambda R_\mu) = \rank K(\Sh [\fS_\lambda \backslash \fS_n/\fS_\mu])\]
\end{proposition}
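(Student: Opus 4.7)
The plan is to reduce both sides to a double-coset sum over $\fS_\lambda\backslash\fS_n/\fS_\mu$ and then further to the single dimension identity $\dim_\kk\End(\cF_n)=p(n)$.

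For the left side, I apply the biadjointness of Theorem~\ref{thm-biadjoint} to rewrite $\Hom(\cF_\mu,\cF_\lambda)\cong\Hom(\omega_\lambda^*(\sigma_\mu)_*\cB_\mu,\cB_\lambda)$. Proper base change along the Cartesian diagram~\eqref{eqn:Cartesian diagram} (valid since $\sigma_\mu$ is proper), combined with Lemma~\ref{lem:omega* B} applied to each $(\hat\omega_z)^*\cB_\mu\cong\cB_{\lambda\wedge z\mu}$, gives
\[\omega_\lambda^*(\sigma_\mu)_*\cB_\mu \;\cong\; \bigoplus_z (\sigma^\lambda_{\lambda\wedge z\mu})_*\cB_{\lambda\wedge z\mu},\]
with $z$ running over representatives of $\fS_\lambda\backslash\fS_n/\fS_\mu$. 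A second application of biadjointness and Lemma~\ref{lem:omega* B}, this time for the factorization map $\omega^\lambda_{\lambda\wedge z\mu}\colon S^{\lambda\wedge z\mu}X\to S^\lambda X$, collapses each summand to $\End(\cB_{\lambda\wedge z\mu})$, so
\[\dim_\kk\Hom(\cF_\mu,\cF_\lambda) \;=\; \sum_z \dim_\kk\End(\cB_{\lambda\wedge z\mu}).\]

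For the right side, the decomposition of $[\fS_\lambda\backslash\fS_n/\fS_\mu]$ into double cosets gives $\rank K(\Sh[\fS_\lambda\backslash\fS_n/\fS_\mu])=\sum_z\rank R_\C(\fS_{\lambda\wedge z\mu})$. Since $\cB_\nu\cong\boxtimes_i\cF_{|\nu_i|}$ represents the external product functor $\boxtimes_i\Phi_{|\nu_i|}$ (by Theorem~\ref{thm-representFn} and the product structure of hyperbolic restriction), evaluation at $\cB_\nu$ gives $\End(\cB_\nu)\cong\bigotimes_i\End(\cF_{|\nu_i|})$, matching $R_\C(\fS_\nu)=\bigotimes_i R_\C(\fS_{|\nu_i|})$ term by term. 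The proposition thus reduces to the single identity $\dim_\kk\End(\cF_n)=p(n)$ for every $n$.

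This last identity is the main obstacle: it is the dimension part of Theorem~\ref{thm-spr}, which is proved only later in Section~\ref{sec:iso}. I would establish it here independently by realizing $\End(\cF_n)$ as a convolution algebra on top Borel--Moore homology \cite[Thm.~8.6.7]{CG}: since $\Hilb^n X$ is smooth and $\rho_n$ is proper and semismall, one has $\End(\cF_n)\cong H^{\mathrm{BM}}_{4n}(\Hilb^n X\times_{S^n X}\Hilb^n X,\kk)$. The fiber product decomposes as $\bigsqcup_\lambda\rho_n^{-1}(S^n_\lambda X)\times_{S^n_\lambda X}\rho_n^{-1}(S^n_\lambda X)$, and by Brian\c{c}on~\cite{Bri77} (irreducibility of the fibers) together with Kaledin's fibration result~\cite{Kal06}, each piece is an irreducible variety of dimension $2n$. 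Hence the top-dimensional components of the fiber product are in bijection with partitions of $n$, and top Borel--Moore homology, spanned by their fundamental classes, has rank exactly $p(n)$.
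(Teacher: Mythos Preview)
Your proof is correct and follows essentially the same route as the paper: adjunction plus proper base change in the Cartesian diagram \eqref{eqn:Cartesian diagram} reduces to $\sum_z \dim\End(\cB_{\lambda\wedge z\mu})$, the product structure reduces to the single identity $\dim\End(\cF_n)=p(n)$, and this is established independently via semismallness and Brian\c{c}on's irreducibility of the fibers. The only cosmetic difference is that the paper cites \cite[Lemma~8.6.1]{CG} (the sum-of-squares formula for semismall pushforwards) rather than \cite[Thm.~8.6.7]{CG} (the convolution-algebra realization); these yield the same count, and your concern that $\dim\End(\cF_n)=p(n)$ requires a forward reference to Section~\ref{sec:iso} is unfounded---the paper proves exactly this dimension identity right here, just as you do.
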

\begin{proof}
	First consider the case where $\lambda=\mu=(n)$.  Then a standard result about semi-small maps (see \cite[Lemma 8.6.1]{CG}, for example) implies that 
    
     \[\dim_\kk \End(\cF_n) = \sum_{\lambda\in \Part{n}} \left(\dim_\kk H^{\codim S^n_\lambda X}((\rho_n)^{-1}(x_\lambda),\kk)\right)^2,\]
     	where $x_\lambda$ is any point of $S^n_\lambda X$.  But the fiber $(\rho_n)^{-1}(x_\lambda)$ is irreducible of dimension $\codim S^n_\lambda X$ \cite{Bri77}, so this sum is just      
	\[p(n) = |\Part{n}| = \rank ({}_nR_n) = \rank K(\fS_n\mmod).\]  
	
 	Then for any partition $\nu$, this implies that 
	\begin{align*}
	\dim \End(\cB_\nu) & = \dim \End(\cF_{\hat\nu_1} \boxtimes \dots \boxtimes \cF_{\hat\nu_r}) \\
	& = \prod_{i=1}^r \dim \End(\cF_{\hat\nu_i}) \\
	& = \prod_{i=1}^n p(\hat\nu_i) \\
	& =\rank K(\fS_\nu\mmod).
	\end{align*}
	
	So we have
	\[\Hom(\cF_\mu,\cF_\lambda) = \Hom((\sigma_\mu)_*\cB_\mu,(\sigma_\lambda)_*\cB_\lambda) = \Hom((\omega_\lambda)^*(\sigma_\mu)_*\cB_\mu,\cB_\lambda)\]
	using the adjunction $((\omega_\lambda)^*, (\sigma_\lambda)_*)$.  
	By proper base change in the square \eqref{eqn:Cartesian diagram} and the fact that 
	$(\hat\omega_z)^*\cB_\mu = \cB_{\lambda\wedge z\mu}$ (Lemma \ref{lem:omega* B}), this becomes
	\[\bigoplus_z \Hom((\sigma^\lambda_{\lambda\wedge z\mu})_*\cB_{\lambda\wedge z\mu}, \cB_\lambda) = \bigoplus_z \Hom(\cB_{\lambda\wedge z\mu}, (\omega^\lambda_{\lambda\wedge z\mu})^*\cB_\lambda) = \bigoplus_z \End(\cB_{\lambda\wedge z\mu}).\]
	Thus $\Hom(\cF_\mu,\cF_\lambda)$ has dimension
	\[\sum_z \dim_\kk \End(\cB_{\lambda\wedge z\mu}) = \sum_z \rank K(\fS_{\lambda\wedge z\mu}\mmod) = \rank K(\Sh [\fS_\lambda \backslash \fS_n/\fS_\mu]).\qedhere\]
\end{proof}

%%%%%
\subsection{Checking the relations III: the Mackey decomposition}
%%%%%

Next we will show that the relation (d) from Theorem \ref{thm: presenting Rn} holds.  It is enough to consider the case where $\nu = (n)$ and $\lambda,\mu\in \Part{[n]}$ are arbitrary partitions, because we can handle the parts $(\nu_i)$ of $\nu$ separately, and then take the exterior product of the resulting identities.

We take the Cartesian square \eqref{eqn:Cartesian diagram}, along with the ``opposite" diagram where the roles of $\lambda$,$\mu$ are reversed:

\begin{equation}\label{eqn:double Cartesian diagram}
\begin{gathered}
\xymatrix@R+10pt@C+25pt{ {Z := \coprod_z S^{\lambda\wedge z\mu}X\;} \ar@<.5ex>[r]^>>>>>>>>{\hat\omega}\ar@<-.5ex>[r]_>>>>>>>>{\hat\sigma}
	\ar@<.5ex>[d]^{\bar\sigma} 
	\ar@<-.5ex>[d]_{\bar\omega}
	&  S^\mu X \ar@<.5ex>[d]^{\omega_\mu} \ar@<-.5ex>[d]_{\sigma_\mu} \\
	S^\lambda X \ar@<.5ex>[r]^{\sigma_\lambda} \ar@<-.5ex>[r]_{\omega_\lambda} & S^n X}
\end{gathered}
\end{equation}
(Strictly speaking, reversing $\lambda$ and $\mu$ in \eqref{eqn:Cartesian diagram} would give $\coprod_w S^{\mu\wedge w\lambda} X$ in the upper left.  But letting $w = z^{-1}$, the two spaces are identified under the action of $z$ on each term. This changes where the twisting maps appear, but otherwise does not affect the properties of the diagram.)

Here are the key properties of \eqref{eqn:double Cartesian diagram}.
\begin{itemize}
	\item The squares $(\hat\omega,\bar\sigma, \sigma_\mu, \omega_\lambda)$ and
	$(\hat\sigma, \bar\omega,\omega_\mu, \sigma_\lambda)$ are Cartesian.
	\item We have $\sigma_\mu \hat\sigma = \sigma_\lambda\bar\sigma = \coprod_z \sigma_{\lambda\wedge z\mu}$.
	\item If $\omega_\lambda = \sigma_\lambda\psi_\lambda$ and $\omega_\mu = \sigma_\mu\psi_\mu$, then there are maps $\hat\psi_\lambda, \bar\psi_\mu\colon Z\to Z$ induced by the same underlying maps $X^n \to X^n$ as $\psi_\lambda$, $\psi_\mu$ respectively.  These maps satisfy:
	\[\hat\omega = \hat\sigma \hat\psi_\lambda, \;\;\;\bar\omega = \bar\sigma \bar\psi_\mu,\;\;\; \psi_\lambda \bar\sigma = \bar\sigma \hat\psi_\lambda, \;\;\;
	\psi_\mu\hat\sigma = \hat\sigma\bar\psi_\mu.\]
	\item Although the maps
	\[\omega_\mu\hat\omega = \sigma_\mu\hat\sigma \bar\psi_\mu\hat\psi_\lambda, \;\mbox{and}\; \omega_\lambda\bar\omega = \sigma_\lambda\bar\sigma\hat\psi_\lambda\bar\psi_\mu\]
	may not be equal, $\bar\psi_\mu\hat\psi_\lambda$ and $\hat\psi_\lambda\bar\psi_\mu$ are both $(\lambda\wedge z\mu)$-separated on each component $S^{\lambda\wedge z\mu} X$ of $Z$, and so Lemma \ref{lem:omega pullback} gives a natural isomorphism of functors $\hat\omega^*(\omega_\mu)^* \simeq \bar\omega^*(\omega_\lambda)^*$ on $\Perv_\cS(S^nX,\kk)$.  In the same way we have a natural isomorphism $\hat\omega^*(\psi_\mu)^* \simeq (\bar\psi_\mu)^*\hat\omega^*$ for perverse sheaves which are $(\cS\cap U_\mu)$-constructible on the open set $U_\mu \subset S^\mu X$ of points whose different $\mu$-components don't intersect (this set was used in the proof of Proposition \ref{prop-hominto}).
\end{itemize} 

\begin{lemma}\label{lem:adjunction compatibility}
	Under the natural isomorphism
	\[(\omega_\lambda)^*(\sigma_\mu)_*(\omega_\mu)^* \simeq \bar\sigma_*\hat\omega^*(\omega_\mu)^* \simeq \bar\sigma_*\bar\omega^*(\omega_\lambda)^*,\]
	the maps
	\[(\omega_\lambda)^* \to (\omega_\lambda)^*(\sigma_\mu)_*(\omega_\mu)^*,\;\;\; (\omega_\lambda)^* \to \bar\sigma_*\bar\omega^*(\omega_\lambda)^*\]
	obtained by composing with the units of the adjunctions become equal.
\end{lemma}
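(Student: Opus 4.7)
The plan is to reduce the equality to a diagram chase using triangle identities for the biadjunctions $((\omega_\mu)^*, (\sigma_\mu)_*)$ and $(\bar\omega^*, \bar\sigma_*)$ of Theorem \ref{thm-biadjoint}, combined with naturality of the proper base change isomorphism $\mathrm{bc}\colon (\omega_\lambda)^*(\sigma_\mu)_* \simeq \bar\sigma_*\hat\omega^*$ and of the natural iso $\theta\colon \hat\omega^*(\omega_\mu)^* \simeq \bar\omega^*(\omega_\lambda)^*$ from Lemma \ref{lem:omega pullback}.

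First, I pass to adjoint maps under $(\bar\omega^*, \bar\sigma_*)$: both maps $(\omega_\lambda)^*\cF \to \bar\sigma_*\bar\omega^*(\omega_\lambda)^*\cF$ correspond, via ``apply $\bar\omega^*$ then compose with $\bar\epsilon$'', to endomorphisms of $\bar\omega^*(\omega_\lambda)^*\cF$. For Way 2, the triangle identity for $(\bar\omega^*, \bar\sigma_*)$ shows the corresponding endomorphism is the identity. For Way 1, after applying $\bar\omega^*$ and post-composing with $\bar\epsilon$, naturality of $\bar\epsilon$ commutes it past $\bar\omega^*\bar\sigma_*\theta$, leaving $\theta$ on the outside. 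The remaining piece $\bar\epsilon \circ \bar\omega^*(\mathrm{bc})$ equals the mate of $\theta^{-1}$ under the pair of adjunctions, namely $\hat\omega^*\epsilon_\mu \circ \theta^{-1}(\sigma_\mu)_*$. Pre-composing with $\bar\omega^*(\omega_\lambda)^*\eta_\mu$ and applying naturality of $\theta^{-1}$ with respect to $\eta_\mu$ transports $\theta^{-1}$ to the source. The triangle identity $\epsilon_\mu(\omega_\mu)^* \circ (\omega_\mu)^*\eta_\mu = \id_{(\omega_\mu)^*}$ then cancels the remaining unit and counit, leaving $\theta \circ \theta^{-1} = \id$, as desired.

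The main obstacle is verifying the mate identity $\bar\epsilon \circ \bar\omega^*(\mathrm{bc}) = \hat\omega^*\epsilon_\mu \circ \theta^{-1}(\sigma_\mu)_*$ in the context of the modified biadjunctions of Theorem \ref{thm-biadjoint}, whose units and counits (Corollaries \ref{cor:unit} and \ref{cor:counit formula}) involve the homotopy isomorphism of Lemma \ref{lem:psi isom id} in addition to the standard proper adjunctions. To verify it, I would expand both sides by factoring $\omega_\mu = \sigma_\mu\psi_\mu$ and $\bar\omega = \bar\sigma\bar\psi_\mu$: the part involving $\sigma_\mu$ reduces to the standard mate formula for proper base change along the Cartesian square (where the mate of the tautological equality $\bar\sigma^*\omega_\lambda^* = \hat\omega^*\sigma_\mu^*$ is the standard $\mathrm{bc}$), while the $\psi$-contributions assemble into $\theta$ by its construction in Lemma \ref{lem:omega pullback}. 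The commutation identities $\psi_\lambda \bar\sigma = \bar\sigma \hat\psi_\lambda$ and $\psi_\mu \hat\sigma = \hat\sigma \bar\psi_\mu$ listed among the properties of the diagram ensure that these pieces glue coherently into the claimed mate identity.
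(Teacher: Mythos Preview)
Your strategy is the same as the paper's: both arguments pass to the adjoint under $(\bar\omega^*,\bar\sigma_*)$ and reduce the claim to showing that a certain endomorphism of $\bar\omega^*(\omega_\lambda)^*\cF$ is the identity. The paper implements this via an explicit four-row commutative diagram whose columns unwind the description of the biadjunctions in Theorem~\ref{thm-biadjoint}.

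The gap is in your verification of the mate identity $\bar\epsilon \circ \bar\omega^*(\mathrm{bc}) = \hat\omega^*\epsilon_\mu \circ \theta^{-1}(\sigma_\mu)_*$. You correctly flag this as the main obstacle, but the sketch (``the commutation identities \dots\ ensure that these pieces glue coherently'') does not engage with the actual difficulty. The counits $\epsilon_\mu$ and $\bar\epsilon$ each contain, via Corollary~\ref{cor:counit formula}, an application of the isomorphism of Lemma~\ref{lem:psi isom id}, for $\psi_\mu$ on one side and for $\bar\psi_\mu$ on the other. These isomorphisms, as well as $\theta$ itself, are produced by homotopy continuation (Lemma~\ref{lem:stratified homotopy}) along paths in spaces of separated maps. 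Checking that they cohere under $\hat\omega^*$ amounts to comparing two \emph{different} paths of $(\lambda\wedge z\mu,\mu)$-separated maps from $\hat\psi_\lambda\bar\psi_\mu$ to $\hat\psi_\lambda$; the commutation identities $\psi_\lambda\bar\sigma = \bar\sigma\hat\psi_\lambda$ and $\psi_\mu\hat\sigma = \hat\sigma\bar\psi_\mu$ tell you the endpoints agree, but not that the resulting continuation maps coincide. The paper closes this by invoking Lemma~\ref{lem:Emb is simply connected} (simple connectivity of $\Emb_\mu(X,\lambda)$) so that any two such paths yield the same isomorphism. Without this topological input, your mate identity remains unproved.
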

\begin{proof}
Take $\cF\in \Perv_\cS(S^nX,\kk)$, and consider the diagram:
	\[\xymatrix{
	     \Hom(\cF,(\sigma_\mu)_*(\omega_\mu)^*\cF) \ar[d]_{adj}\ar[r]^<<<<<<<{(\omega_\lambda)^*} & \Hom((\omega_\lambda)^*\cF, \bar\sigma_*\bar\omega^*(\omega_\lambda)^*\cF) \ar[d]_{adj} &\\ \Hom((\sigma_\mu)^*\cF,(\omega_\mu)^*\cF ) \ar[d]_{(\psi_\mu)^*}\ar[r]^{\hat\omega^*} & \Hom(\bar\sigma^*(\omega_\lambda)^*\cF,\bar\omega^*(\omega_\lambda)^*\cF)\ar[d]_{(\bar\psi_\mu)^*}\\ \Hom((\psi_\mu)^*(\sigma_\mu)^*\cF,(\psi_\mu)^*(\omega_\mu)^*\cF)\ar[d]\ar[r]^{\hat\omega^*} & \Hom((\bar\psi_\mu)^*\bar\sigma^*(\omega_\lambda)^*\cF,(\bar\psi_\mu)^*\bar\omega^*(\omega_\lambda)^*\cF)\ar[d]\\
	     \Hom((\omega_\mu)^*\cF,(\omega_\mu)^*\cF) \ar[r]^{\hat\omega^*} & \Hom(\bar\omega^*(\omega_\lambda)^*\cF,\bar\omega^*(\omega_\lambda)^*\cF)\\
	}\]
Here the vertical columns give the adjunction $((\omega_\mu)^*,(\sigma_\mu)_*)$ and $((\bar\omega)^*,(\bar\sigma)_*)$ as given in Theorem \ref{thm-biadjoint}.  The horizontal maps are given by applying the indicated pullbacks and using base change and commutativity in the diagram.

The top square commutes by the compatibility of the adjunction $((\sigma_\mu)^*,(\sigma_\mu)_*)$ with pullback by the open inclusion $\omega_\lambda$.  The middle square commutes using the natural isomorphism $(\psi_\mu)^*\hat\omega^* \simeq \hat\omega^*(\bar\psi_\mu)^*$.  
%compatibility of the (classical) adjunctions with pullback by an open inclusion. The middle square commutes because $\psi_\mu \hat\omega = \hat\sigma\bar\psi_\mu\hat\psi_\lambda$ and 
%$\hat\omega\bar\psi_\mu = \hat\sigma\hat\psi_\lambda\bar\psi_\mu$, Lemma \ref{lem:omega pullback} shows that their pullbacks agree.

Next we show that the bottom square commutes.  The bottom left vertical map in the diagram is composition with the isomorphism $(\psi_\mu)^*(\omega_\mu)^*\cF \to (\omega_\mu)^*\cF$ of Lemma \ref{lem:psi isom id}, which is given by applying Lemma \ref{lem:stratified homotopy} to the pullback by a family $\Psi_\mu(t)$ of $\mu$-separated maps with $\Psi_\mu(0) = \psi_\mu$ and $\Psi_\mu(1) = \id_{S^\mu X}$.  Applying $\hat\omega^* = (\hat\psi_\lambda)^*\hat\sigma^*$, this becomes an isomorphism
\[(\hat\psi_\lambda)^*(\bar\psi_\mu)^*\hat\sigma^* (\omega_\mu)^*\cF \to (\hat\psi_\lambda)^*\hat\sigma^* (\omega_\mu)^*\cF.\] 
This isomorphism is obtained by the same homotopy argument using the family $t\mapsto \bar\Psi_\mu(t) \hat\psi_\lambda$, where  
$\bar\Psi_\mu$ denotes the path from $\bar\psi_\mu$ to the identity $Z \to Z$ induced by the same family of maps $X^n \to X^n$ as $\Psi_\mu$, so that $\Psi_\mu(t)\hat\sigma = \hat\sigma \bar\Psi_\mu(t)$ for all $t\in [0,1]$.  Each map in this path is $(\lambda\wedge z\mu, \mu)$-separated on the component $S^{\lambda\wedge z\mu}X$ of $Z$.

On the other hand, going right first and then down in the bottom square is given by applying the isomorphism $\hat\omega^*(\psi_\mu)^* \simeq (\bar\psi_\mu)^*\hat\omega^*$ followed by composition with the isomorphism $\bar\psi_\mu\bar\omega^*(\omega_\lambda)^*\cF \to \bar\omega^*(\omega_\lambda)^*\cF$ of Lemma \ref{lem:psi isom id}.  This is given by pulling back by a path of $(\lambda\wedge z\mu, \mu)$-separated maps $Z \to Z$ obtained as the concatenation of a path from $\hat\psi_\lambda\bar\psi_\mu$ to $\bar\psi_\mu\hat\psi_\lambda = \bar\Psi_\mu(0)\hat\psi_\lambda$ and the path $t\mapsto \bar\Psi_\mu(t)\hat\psi_\lambda$. 

The upshot is that both compositions in the bottom square are given by composing with a map obtained by homotopy continuation along a path of $(\lambda\wedge z\mu, \mu)$-separated maps from $\hat\psi_\lambda\bar\psi_\mu$ to $\hat\psi_\lambda$.  These paths are not the same, but since the space of $(\lambda\wedge z\mu, \mu)$-separated maps is simply connected by Lemma \ref{lem:Emb is simply connected}, Lemma \ref{lem:stratified homotopy} implies that these maps are equal.  Thus the lower square commutes.

Starting with the unit $\cF \to (\sigma_\mu)_*(\omega_\mu)^*\cF$ in the top left and mapping down gives the identity in the lower-left, and thus gives the identity in the lower right.  Then following the right column shows that $(\omega_\lambda)^*\cF \to (\omega_\lambda)^*(\sigma_\mu)_*(\omega_\mu)^*\cF = \bar\sigma_*\bar\omega^*(\omega_\lambda)^*\cF$ is the unit map of $(\bar\omega^*,\bar\sigma_*)$ applied to $(\omega_\lambda)^*\cF$, as desired.
 \end{proof}

Now we are ready to show that the relation (d) holds.  As already noted, we can assume $\nu = (n)$.  So we want to show
	\[{}_\mu1_{(n)} \cdot {}_{(n)} 1_\lambda \simeq \sum_z {}_\mu1_{\mu\wedge z\lambda} \cdot {}_{\mu\wedge z\lambda}1_{z\lambda} \cdot z_*,\]
	where the sum is over a set of representatives $z$ of double cosets in $\fS_\mu \backslash \fS_n / \fS_\lambda$.

The composition
\[\cF_\lambda = (\sigma_\lambda)_*(\omega_\lambda)^*\cF_n \xrightarrow{\;{}_{(n)} 1_\lambda\;} \cF_n {\xrightarrow{\;{}_\mu1_{(n)}\;}} (\sigma_\mu)_*(\omega_\mu)^*\cF_n = \cF_\mu\]
can also be written
\begin{equation}\label{eqn:compose adjunctions}
(\sigma_\lambda)_*(\omega_\lambda)^*\cF_n \to (\sigma_\lambda)_*(\omega_\lambda)^*(\sigma_\mu)_*(\omega_\mu)^*\cF_n\to (\sigma_\mu)_*(\omega_\mu)^*\cF_n.
\end{equation}
By Lemma \ref{lem:adjunction compatibility}, the first map is equal to 
\[(\sigma_\lambda)_*(\omega_\lambda)^*\cF_n \to (\sigma_\lambda)_*(\bar\sigma)_*(\bar\omega)^*(\omega_\lambda)^*\cF_n.\]
Pushing forward to $S^nX$ gives the sum
\[\cF_\lambda \xrightarrow{\;\sum {}_{\lambda\wedge z\mu}1_{\lambda}\;} \bigoplus_z \,\cF_{\lambda\wedge z\mu}\]
In a similar way we see that pushing the second map in \eqref{eqn:compose adjunctions} is the sum of 
\[\bigoplus_z \,\cF_{\lambda\wedge z\mu} \xrightarrow{(z^{-1})_*} \bigoplus_z \,\cF_{z^{-1}\lambda\wedge \mu} \xrightarrow{\sum {}^{}_\mu 1_{z^{-1}\lambda\wedge \mu}} \cF_\mu.\]
After some reindexing this is what we wanted to show.

%%%%%
\section{Proof of the isomorphism} \label{sec:iso}
%%%%%

%%%%%
\subsection{Computation of endomorphism ring of Hilbert-Chow sheaf} 
%%%%%

As a first step to showing that our homomorphism $\Theta$ is an isomorphism, we will show that it restricts to an isomorphism $\Theta_{(n)}\colon {}_{(n)}R_{(n)}\otimes \kk \to \End(\cF_n)$, thus proving Theorem \ref{thm-spr} from the introduction.  
We have
\[{}_{(n)}R_{(n)}\otimes \kk = K(\Sh[\fS_n\backslash \fS_n/\fS_n])\otimes \kk = R(\fS_n)\otimes \kk,\]
and a $\kk$-basis for this space is given by the elements
\[[\Ind_{\fS_\lambda}^{\fS_n} 1_{\fS_\lambda}] = {}_{(n)}\bone_\lambda \circ {}_\lambda\bone_{(n)}\]
where $\lambda$ runs over all integer partitions $\Part{n}$ (recall that we are fixing an embedding of this set into $\Part{[n]}$).  Thus it will be enough to prove the following.

\begin{proposition}\label{prop:basis of End Fn}
The elements $\Theta({}_{(n)}\bone_\lambda \circ {}_\lambda\bone_{(n)}) = {}_{(n)}1_\lambda \cdot {}_\lambda 1_{(n)}$ for $\lambda\in \Part{n}$ are a basis of $\End(\cF_n)$.	
\end{proposition}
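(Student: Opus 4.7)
By Proposition~\ref{prop:dimensions agree}, $\dim_\kk \End(\cF_n) = p(n) = |\Part{n}|$, so the task reduces to showing that the $p(n)$ elements $e_\lambda := {}_{(n)}1_\lambda \cdot {}_\lambda 1_{(n)}$ for $\lambda \in \Part{n}$ are $\kk$-linearly independent in $\End(\cF_n)$.

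The plan is to argue by strong induction on $n$. The base case $n=1$ is trivial since $\End(\cF_1) = \End(\uk_X[2]) = \kk$. For the inductive step, I would study the ring homomorphisms
\[\omega_\mu^* \colon \End(\cF_n) \to \End(\omega_\mu^*\cF_n) = \End(\cB_\mu)\]
indexed by $\mu \in \Part{n}$ with $\mu \ne (n)$. These are well-defined ring maps because $\omega_\mu^*$ is a functor and $\omega_\mu^*\cF_n \cong \cB_\mu$ canonically by Lemma~\ref{lem:omega* B}.

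The central computation is the Mackey-type identity
\[\omega_\mu^*(e_\lambda) \;=\; \sum_{z \in \fS_\mu\backslash\fS_n/\fS_\lambda} e^{(\mu)}_{\mu\wedge z\lambda},\]
where $e^{(\mu)}_\nu$ denotes the analogous element ${}_\nu 1_\mu \cdot {}_\mu 1_\nu$ defined inside $\End(\cB_\mu) = \bigotimes_i \End(\cF_{\mu_i})$. This formula is obtained by applying $\omega_\mu^*$ to the two-step composition $\cF_n \to \cF_\lambda \to \cF_n$ defining $e_\lambda$, using proper base change along the Cartesian diagram \eqref{eqn:Cartesian diagram} (with the bottom right replaced by $\cF_n$, i.e., taking the outer partition to be $(n)$), the identification $(\hat\omega_z)^*\cB_\lambda \cong \cB_{\mu\wedge z\lambda}$ from Lemma~\ref{lem:omega* B}, and the naturality of the unit and counit of the biadjunctions from Theorem~\ref{thm-biadjoint}. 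Since each $\mu_i < n$ when $\mu \ne (n)$, the inductive hypothesis identifies $\End(\cB_\mu) \cong R_\C(\fS_\mu) \otimes_\Z \kk$ with $e^{(\mu)}_\nu$ corresponding to $[\Ind_{\fS_\nu}^{\fS_\mu} 1_{\fS_\nu}]$, so Mackey's formula identifies $\omega_\mu^*(e_\lambda)$ with $[\Res_{\fS_\mu}^{\fS_n}\Ind_{\fS_\lambda}^{\fS_n} 1_{\fS_\lambda}]$ in $R_\C(\fS_\mu)\otimes \kk$.

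To conclude, I would combine the information across all $\mu$ together with the faithfulness of $\Phi = \bigoplus_\mu \Phi_\mu$ (Lemma~\ref{lem:hyperbolic stalks are complete}(3)) and the classical fact that the permutation characters $\{[\Ind_{\fS_\lambda}^{\fS_n} 1_{\fS_\lambda}]\}_{\lambda \in \Part{n}}$ form a $\Z$-basis of $R_\C(\fS_n)$ that is unitriangular with respect to the basis of irreducible characters via Kostka numbers (with $K_{\lambda\lambda}=1$). The main obstacle will be extracting linear independence over an arbitrary field $\kk$ from this combinatorial data: the integer coefficients appearing in the Mackey expansions may vanish modulo $\operatorname{char}\kk$, so the argument should proceed triangularly, restricting to a carefully chosen sequence of $\mu$'s dictated by the dominance order on partitions to isolate the contribution of each $e_\lambda$ one at a time and so recover linear independence uniformly in $\kk$.
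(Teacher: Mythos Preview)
Your approach is genuinely different from the paper's, and it has a real gap in positive characteristic.  The inductive information coming from the maps $\omega_\mu^*\colon \End(\cF_n)\to \End(\cB_\mu)$ for $\mu\neq (n)$ is not enough to separate the elements $e_\lambda$ over an arbitrary field.  Concretely, take $n=2$ and $\operatorname{char}\kk=2$.  The only proper $\mu$ is $(1,1)$, and your Mackey formula gives
\[
\omega_{(1,1)}^*(e_{(1,1)}) \;=\; |\fS_{(1,1)}\backslash\fS_2/\fS_{(1,1)}|\cdot e^{(1,1)}_{(1,1)}
\;=\; 2\cdot 1 \;=\; 0 \quad\text{in }\kk,
\]
so $e_{(1,1)}$ lies in the kernel of every $\omega_\mu^*$ with $\mu\neq (n)$.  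The faithfulness of $\Phi=\bigoplus_\mu\Phi_\mu$ does not rescue you here, because the missing piece is precisely $\Phi_{(n)}=\Phi_n$, whose action on $\Phi_n(\cF_n)$ is exactly what you are trying to determine; the induction gives you no grip on it.  More generally, $\omega_\mu^*(e_{(1^n)})=[\fS_n:\fS_\mu]\cdot e^{(\mu)}_{(1^n)}$, and these indices are all divisible by $p$ once $p\le n$, so no triangular ordering of the proper $\mu$'s can isolate $e_{(1^n)}$.

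The paper sidesteps this by computing the action of the $e_\lambda$ directly on the single hyperbolic stalk $\Phi_n(\cF_n)\cong H^{2n}_Z(\Hilb^n X;\kk)$, using its basis $\{\alpha_\mu\}$ indexed by irreducible components of the Lagrangian $Z=\rho_n^{-1}(S^n\Sigma)$.  Proposition~\ref{prop:action on basis} gives a triangular formula for $\Phi_n({}_\lambda 1_{(n)})$, and then the crucial characteristic-free input is Lemma~\ref{lem:dual basis}: the components $Z_{(n)}$ and $\bar Z_{(1^n)}$ meet transversely at a single smooth point, so the change of basis between $\{\alpha_\mu\}$ and the dual basis $\{\bar\alpha_\mu\}$ has a leading coefficient equal to $1$.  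This intersection-number-equals-one statement is what produces a nonvanishing leading term for every $e_\lambda$ uniformly in $\kk$, and it has no analogue in your inductive scheme.
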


%By Corollary \ref{cor:dim End Fn} we know that the source and target of $\Theta_{(n)}$ have the same dimension, so it is enough to show that these elements are linearly independent.  
The proof of this will be based on a calculation of the action of these endomorphisms on the hyperbolic stalk $\Phi_n(\cF_n)$, where to simplify notation we put $\Phi_n = \Phi_{(n)}$.  This group has a concrete description as follows.  With the inclusions 
$\{n\cdot 0\} \stackrel{f}{\longrightarrow} S^n\Sigma \stackrel{g}{\longrightarrow} S^nX$ as in Section \ref{sec:hyperbolic restriction}, we have
\[\Phi_n(\cF_n) = H^0(f^*g^!\cF_n) \cong H^0(g^!\cF_n) \cong H^{2n}_Z(\Hilb^n(X);\kk),\]
the top cohomology of $\Hilb^n (X)$ with supports in the Lagrangian subvariety $Z = \rho^{-1}(S^n\Sigma)$.

  The irreducible components of $Z$ are 
$Z_\mu := \overline{\rho_n^{-1}(S^n_\mu\Sigma)}$ for integer partitions $\mu \in \Part{n}$.  
For each $\mu$ we can choose an open set $\cU_\mu$ of $\Hilb^n(X)$ which intersects $Z$ in a contractible subset of $Z_\mu$ contained in the smooth locus of $Z$.  Restriction to these open sets gives an isomorphism
\[H^{2n}_Z(\Hilb^n(X); \kk)\stackrel{\sim}{\longrightarrow} \!\!\bigoplus_{\mu\in\Part{n}} H^{2n}_{Z \cap\cU_\mu}(\cU_\mu; \kk) \,\cong\, \kk^{|\Part{n}|}. \]
Pulling back the standard basis by this isomorphism gives a basis $\{\alpha_\mu\}$ of $\Phi_n(\cF_n)$.

Similarly, we have
$\Phi_n(\cF_\lambda) \cong H^{2n}_{Z^\lambda}(\Hilb^\lambda(X))$, where
\[Z^\lambda = (\rho_\lambda)^{-1}(S^\lambda\Sigma) \cong \rho_{\lambda_1}^{-1}(S^{\lambda_1}\Sigma) \times \dots \times \rho_{\lambda_r}^{-1}(S^{\lambda_r}\Sigma).\]  The irreducible components of this Lagrangian are
\[Z_{\nu_1}\times \dots \times Z_{\nu_r}, \;\; (\nu_1,\dots,\nu_r) \in \Part{\lambda} := \Part{\lambda_1}\times\dots\times \Part{\lambda_r}.\]
Thus we have a basis 
\[\alpha_{\nu_1} \boxtimes \dots \boxtimes \alpha_{\nu_r}, \;\; (\nu_1,\dots,\nu_r) \in \Part{\lambda}\]
for $\Phi_n(\cF_\lambda)$.

\begin{proposition}\label{prop:action on basis} With respect to these bases, the map
	$\Phi_n({}_\lambda 1_{(n)}) \colon \Phi_n(\cF_n) \to \Phi_n(\cF_\lambda)$ induced by 
	${}_\lambda 1_{(n)}$  is given by 
	\[\alpha_\mu \mapsto \sum_{\nu_1|\nu_2|\dots|\nu_r = \mu} \alpha_{\nu_1} \boxtimes \dots \boxtimes \alpha_{\nu_r},\]
	where the sum is over tuples $(\nu_1,\dots,\nu_r) \in \Part{\lambda}$ whose concatenation is $\mu$.  In particular, $\alpha_\lambda$ is sent to $\alpha_{(\lambda_1)}\boxtimes \dots\boxtimes \alpha_{(\lambda_r)}$, and 
	$\alpha_\mu$ is sent to $0$ if $\mu \not\preceq \lambda$.
\end{proposition}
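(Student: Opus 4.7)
The plan is to identify $\Phi_n({}_\lambda 1_{(n)})$ with a concrete geometric pullback along an open embedding of Hilbert schemes, after which the claimed formula follows by a direct cycle computation. Using the Yoneda identification $\Phi_n \cong \Hom(\cF_n, -)$ from Theorem~\ref{thm-representFn} together with the biadjunction $((\omega_\lambda)^*, (\sigma_\lambda)_*)$ of Theorem~\ref{thm-biadjoint}, a short triangle-identity argument shows that $\Phi_n({}_\lambda 1_{(n)})\colon \End(\cF_n) \to \End(\cB_\lambda)$ corresponds to the functorial pullback $\phi \mapsto (\omega_\lambda)^* \phi$. Indeed, for $\phi \in \End(\cF_n)$, the composition ${}_\lambda 1_{(n)} \circ \phi$, transported across the biadjunction to $\Hom((\omega_\lambda)^*\cF_n, \cB_\lambda) = \End(\cB_\lambda)$ via the isomorphism $(\omega_\lambda)^*\cF_n \cong \cB_\lambda$ of Lemma~\ref{lem:omega* B}, is exactly $(\omega_\lambda)^*\phi$.

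Next, I would choose the $(\lambda,(n))$-separated map $\psi$ so that its disk centers all lie on $\Sigma$, giving $\omega_\lambda(S^\lambda \Sigma) \subset S^n \Sigma$. Because the Hilbert scheme is local on the base (as in the proof of Theorem~\ref{thm-representFn}), the associated Cartesian square
\[\xymatrix{\Hilb^\lambda X \ar[r]^{\tilde\omega_\lambda}\ar[d]^{\rho_\lambda} & \Hilb^n X \ar[d]^{\rho_n} \\ S^\lambda X \ar[r]^{\omega_\lambda} & S^n X}\]
has $\tilde\omega_\lambda$ an open embedding with $\tilde\omega_\lambda^{-1}(Z) = Z^\lambda$. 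Combining this square with proper base change for hyperbolic restriction through $\sigma_\lambda$ and with the Contracting lemma, one checks that the representability isomorphisms $\End(\cF_n) \cong H^{2n}_Z(\Hilb^n X)$ and $\End(\cB_\lambda) \cong H^{2n}_{Z^\lambda}(\Hilb^\lambda X)$ intertwine the sheaf pullback $(\omega_\lambda)^*$ with the standard open-restriction map $\tilde\omega_\lambda^*\colon H^{2n}_Z(\Hilb^n X) \to H^{2n}_{Z^\lambda}(\Hilb^\lambda X)$. Verifying this naturality is the main technical obstacle, as it requires careful tracking of the chain of natural isomorphisms used to identify hyperbolic stalks with Borel--Moore homology.

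Finally, the computation on basis classes is purely geometric. A point of $\tilde\omega_\lambda(\Hilb^\lambda X) \cap \rho_n^{-1}(S^n_\mu \Sigma)$ is a length-$n$ subscheme supported on $\Sigma$ that is split by the factorization into $r$ separated clusters of total sizes $\lambda_1, \dots, \lambda_r$; the multiplicity pattern of the $i$-th cluster is a partition $\nu_i$ of $\lambda_i$, and every tuple $(\nu_1,\dots,\nu_r) \in \Part{\lambda}$ with $\nu_1|\cdots|\nu_r = \mu$ arises in this way. Thus the top stratum of $\tilde\omega_\lambda^{-1}(Z_\mu)$ is the disjoint union over such tuples of the top strata of the components $Z_{\nu_1} \times \cdots \times Z_{\nu_r}$, and restricting $\alpha_\mu$ to a generic neighborhood of each such component yields coefficient $1$ when $\nu_1|\cdots|\nu_r = \mu$ and $0$ otherwise. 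The special cases follow directly: when $\mu = \lambda$ the only valid tuple is $((\lambda_1), \dots, (\lambda_r))$, and when $\mu \not\preceq \lambda$ no valid tuple exists so the sum is empty.
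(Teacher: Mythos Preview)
Your approach is correct in outline and reaches the same geometric endpoint as the paper, but the route to the identification ``$\Phi_n({}_\lambda 1_{(n)})$ is open restriction along $\tilde\omega_\lambda$'' is genuinely different.

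You go through the representability isomorphism of Theorem~\ref{thm-representFn}: Yoneda turns $\Phi_n({}_\lambda 1_{(n)})$ into post-composition with the unit, and the triangle identity for $((\omega_\lambda)^*,(\sigma_\lambda)_*)$ shows this is $\phi\mapsto(\omega_\lambda)^*\phi$ on $\End(\cF_n)\to\End(\cB_\lambda)$. This step is clean. But to get the proposition as stated you then need that the two identifications $\End(\cF_n)\cong H^{2n}_Z(\Hilb^n X)$ and $\End(\cB_\lambda)\cong H^{2n}_{Z^\lambda}(\Hilb^\lambda X)$ (each built from the long chain of isomorphisms in the proof of Theorem~\ref{thm-representFn}) intertwine $(\omega_\lambda)^*$ with $\tilde\omega_\lambda^*$. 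You flag this as ``the main technical obstacle'' and leave it as a sketch; it is doable, but unwinding that chain naturally with respect to the open embedding is real work.

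The paper avoids this entirely. It never invokes representability here; instead it builds an auxiliary natural transformation $\tau\colon\sigma_*\to\omega_*$ (unit of $(\psi^*,\psi_*)$ followed by Lemma~\ref{lem:psi isom id}) and proves a small lemma that the triangle
\[
1\xrightarrow{\ \eta\ }\sigma_*\omega^*\xrightarrow{\ \tau_{\omega^*}\ }\omega_*\omega^*
\]
agrees with the ordinary unit $\eta_\omega$ of $(\omega^*,\omega_*)$. Applying $H^0_{S^n\Sigma}(-)$ to this triangle evaluated at $\cF_n$ (with $\psi$ chosen so that $\psi^{-1}(S^\lambda\Sigma)=S^\lambda\Sigma$) immediately gives a commutative triangle whose bottom edge is literal restriction $H^{2n}_Z(\Hilb^n X)\to H^{2n}_{Z\cap\rho_n^{-1}(W)}(\rho_n^{-1}(W))$ and whose right edge is the isomorphism induced by the homeomorphism $\Hilb^\lambda X\xrightarrow{\sim}\rho_n^{-1}(W)$. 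Thus $\Phi_n({}_\lambda 1_{(n)})=\Phi_n(\eta)$ \emph{is} $\tilde\omega_\lambda^*$, with no appeal to the representability chain. Your acknowledged obstacle is precisely what this $\tau$-triangle lemma replaces.

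The final geometric step is the same in both: you check which components of $Z^\lambda$ meet $\tilde\omega_\lambda^{-1}(Z_\mu)$, the paper checks which component of $Z$ contains the image of a smooth point of $Z_{\nu_1}\times\cdots\times Z_{\nu_r}$; these are inverse descriptions of the same incidence, and the coefficient $1$ comes from $\tilde\omega_\lambda$ being an open embedding.
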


Before proving this proposition, let us explain how it implies that $\Theta_{(n)}$ is an isomorphism. 
First note that we have $\End(\cF_n) \cong \Phi_n(\cF_n)$, by Theorem \ref{thm-biadjoint}.  Thus the source and target of $\Theta_{(n)}$ have the same dimension, so it is enough to show that
the operators $\Phi_n({}_{(n)}1_\lambda \circ {}_\lambda 1_{(n)})$ are linearly independent.  
By the triangularity in the second statement of the proposition, it suffices to show that 
\[\Phi_n({}_{(n)}1_\lambda \cdot {}_\lambda 1_{(n)})(\alpha_\lambda) = \Phi_n({}_{(n)}1_\lambda)(\alpha_{(\lambda_1)}\boxtimes \dots \boxtimes \alpha_{(\lambda_r)})\] is nonzero, independent of the field $\kk$.

The sheaves $\cF_n$ and $\cF_\lambda$ are canonically isomorphic to their Verdier duals, and under duality ${}_\lambda 1_{(n)}$ is sent to ${}_{(n)}1_\lambda$.  Thus we can use Proposition \ref{prop:action on basis} to describe the action of $\Phi_n({}_{(n)}1_\lambda)$.  However, our basis $\{\alpha_\mu\}$ is not invariant under duality.  Instead we must use the dual basis of
\[\bar\Phi_n(\cF_n) = H^0(\bar f^!\bar g^*\cF_n )\cong H^{2n}_c(\bar Z;\kk) \cong H^{2n}(\bar Z, \bar Z \setminus (\bar Z\cap Z);\kk),\]
where $\bar\Sigma \subset X$ is the $y$-axis and $\bar Z = \rho_n^{-1}(S^n\bar\Sigma)$. The second isomorphism holds because $\bar Z\cap Z = \rho_n^{-1}(0)$ is the union of all $\C^*$-orbits in $\Hilb^n X$ whose closure is compact.  The irreducible components of $\bar Z$ are again indexed by $\Part{n}$, and 
restricting to these components determines the dual basis $\{\bar\alpha_\lambda\}$.   

We have $\D f^*g^! \D \simeq f^!g^*$, and the result of Proposition \ref{prop:action on basis} is the same using either curve $\Sigma$ or $\bar\Sigma$, so 
applying duality the Proposition gives
\[\bar\Phi_n({}_{(n)}1_\lambda)(\bar\alpha_{\mu_1} \boxtimes \dots \boxtimes \bar\alpha_{\mu_r}) = \bar\alpha_{\mu_1|\dots|\mu_r}.\]

To relate the two bases of the hyperbolic stalk, we use the main theorem of hyperbolic restriction \cite{Br03}, which gives gives an isomorphism of functors $\Phi_n = H^0f^*g^! \simeq H^0\bar f^!\bar g^* = \bar\Phi_n$.  

Applying this to $\cF_n$ gives the isomorphism
\[H^{2n}_Z(\Hilb^n(X); \kk)\stackrel{\sim}{\longrightarrow} H^{2n}(\bar Z, \bar Z \setminus (\bar Z\cap Z);\kk)\] which is just restriction from $\Hilb^n(X)$ to $\bar Z$.

\begin{lemma}\label{lem:dual basis}
	Under the isomorphism $\Phi_n(\cF_n) \cong \bar\Phi_n(\cF_n)$, we have
	\[\alpha_{(n)} = \bar\alpha_{(1^n)} + \mbox{other terms}.\]
\end{lemma}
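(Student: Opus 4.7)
The plan is to analyze the Bia\l ynicki-Birula (BB) decompositions of $Z$ and $\bar Z$ at the $\C^*$-fixed point $I_0 := (x, y^n)$ and to match the classes $\alpha_{(n)}$ and $\bar\alpha_{(1^n)}$ via this decomposition.

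First I would compute the $\C^*$-weights on the tangent space $T_{I_0}\Hilb^n(X) = \Hom_{\C[x,y]}((x,y^n), \C[y]/(y^n))$. With $x$ of weight $-1$ and $y$ of weight $+1$ as functions, the maps $\{x \mapsto y^i : 0 \le i < n\}$ span the positive-weight subspace with weights $1, 2, \ldots, n$, while $\{y^n \mapsto y^i : 0 \le i < n\}$ span the negative-weight subspace with weights $-1, \ldots, -n$; in particular there are no zero weights. Next I would identify the BB cell closures: the positive-weight deformation $(x + \sum_i s_i y^i, y^n)$ with $s_0 \ne 0$ represents a punctual length-$n$ scheme at $(-s_0, 0) \in \Sigma$, so the $Z$-attracting BB cell at $I_0$ is open and dense in $Z_{(n)} = \overline{\rho_n^{-1}(S^n_{(n)}\Sigma)}$. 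Symmetrically, the negative-weight deformation $(x, y^n + \sum_i s_i y^i)$ generically has $n$ distinct zeros on $\bar\Sigma$, so the $\bar Z$-attracting BB cell at $I_0$ is open and dense in $\bar Z_{(1^n)} = \Hilb^n(\bar\Sigma)$. Using irreducibility of punctual Hilbert schemes (Brian\c con), one checks further that $I_0$ lies on no $Z_\mu$ with $\mu \ne (n)$, so locally near $I_0$ the class $\alpha_{(n)}$ is represented by the Thom class of $Z_{(n)}$.

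The central step is then to use that the restriction map $H^{2n}_Z(\Hilb^n(X);\kk) \to H^{2n}(\bar Z, \bar Z \setminus \rho_n^{-1}(0);\kk)$ realizing the Braden isomorphism respects this BB decomposition. I would choose a small open neighborhood $\cV$ of $I_0$ in $\Hilb^n(X)$ so that $\cV \cap Z$ lies in the smooth locus of $Z_{(n)}$ and $\cV \cap \bar Z_{(1^n)}$ lies in the smooth locus of $\bar Z_{(1^n)}$. The tangent-weight analysis then forces $\cV \cap Z_{(n)}$ and $\cV \cap \bar Z_{(1^n)}$ to meet transversally in $\cV$ at the single point $I_0$, so the restriction of the local Thom class representing $\alpha_{(n)}$ pulls back to the local Thom class of $\{I_0\}$ inside $\bar Z_{(1^n)}$, which with the complex orientation has coefficient $+1$ in the class $\bar\alpha_{(1^n)}$. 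Contributions to $\bar\alpha_\mu$ for $\mu \ne (1^n)$ would account for the unspecified ``other terms''.

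The main obstacle will be that $\bar Z$ is singular at $I_0$, with several other components $\bar Z_\mu$ (notably $\bar Z_{(n)}$) also passing through it: the transversality argument must therefore be executed component-by-component rather than on all of $\bar Z$, and one must carefully identify the paper's $\bar\alpha_{(1^n)}$ with a local class supported on $\bar Z_{(1^n)}$ near $I_0$. It is this local identification that admits the possibility of ``other terms'' in the expansion.
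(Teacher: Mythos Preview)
Your approach is correct and arrives at the same geometric fact the paper uses: the point $I_0=(x,y^n)$ lies on $Z_{(n)}$ and on $\bar Z_{(1^n)}$ as a smooth point of each, these meet transversally there, and no other component of $Z$ passes through it. The paper's execution is shorter because it reuses Lemma~\ref{lem-cU} with the roles of $x$ and $y$ interchanged to produce an explicit affine chart $\bar\cU \cong \Pol_{n-1}(y)\times\Hilb^n(\bar\Sigma)$ around $I_0$ in which $Z\cap\bar\cU = \Pol_{n-1}(y)\times\{n\cdot 0\}$ and $\bar Z_{(1^n)}\cap\bar\cU = \{0\}\times\Hilb^n(\bar\Sigma)$ are literally the two coordinate factors. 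This product structure makes the transversality immediate and simultaneously shows that $\bar\cU$ meets no $Z_\mu$ with $\mu\ne(n)$, so your appeal to Brian\c con and the component-by-component handling of the singular locus of $\bar Z$ become unnecessary. Your tangent-weight computation is effectively the infinitesimal version of this chart; the paper just has the global coordinates already available from earlier in the argument.
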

\begin{proof}%[Proof of Lemma \ref{lem:dual basis}]
	The point $(x,y^n) \in \Hilb^n X$ has an affine neighborhood in which $Z_{(n)}$ and $\bar Z_{(1^n)}$ intersect transversely at a smooth point of each, and this neighborhood contains no other components of $Z$.  More precisely, we can take this neighborhood to be 
	\[ \bar\cU = \{ I \in \Hilb^n X \mid \C[y] \to \C[x,y]/I \textrm{~is~surjective}\},\]
	i.e.\ the set considered in Section \ref{sec:Hilbert-Chow and hyperbolic restriction} with the roles of $x$ and $y$ interchanged.  Then the inverse image of $Z_{(n)}$ under the isomorphism
	\[ \Pol_{n-1}(y) \times \Hilb^n(\bar\Sigma)  \to \bar{U}\]
	of Lemma \ref{lem-cU} is $\Pol_{n-1}(y) \times \{n\cdot 0\}$, and the inverse image of $Z_{(1^n)}$ is $\{0\}\times \Hilb^n(\bar\Sigma)$.	
\end{proof}

Putting this together we have 
\begin{align*}
\Phi_n({}_{(n)}1_\lambda \circ {}_\lambda 1_{(n)})(\alpha_\lambda) & = \Phi_n({}_{(n)}1_\lambda)(\alpha_{\lambda_1}\boxtimes \dots \boxtimes \alpha_{\lambda_r}) \\
& = \Phi_n({}_{(n)}1_\lambda)(\bar\alpha_{(1^{\lambda_1})}\boxtimes \dots \boxtimes \bar\alpha_{(1^{\lambda_r})} + \mbox{other terms}) \\
& = \bar\alpha_{(1^n)} + \mbox{other terms}.
\end{align*}
This proves the linear independence of ${}_{(n)}1_\lambda \circ {}_\lambda 1_{(n)}$, and thus the theorem.

\begin{proof}[Proof of Proposition \ref{prop:action on basis}]
Fix $\lambda\in \Part{[n]}$, and let $\sigma, \omega=\sigma\psi\colon S^\lambda X \to S^n X$ be corresponding symmetrization and factorization maps. 
On $D^b_\cS(S^nX, \kk)$ we have a natural transformation
\[1 \to \psi_*\psi^* \to \psi_*\]
which is the composition of the unit of the adjunction $(\psi^*,\psi_*)$ and the isomorphism of Lemma \ref{lem:psi isom id}.  Composing with $\sigma_*$ gives a natural transformation $\tau\colon \sigma_* \to \omega_*$.  

\begin{lemma} 
	The triangle
	\begin{equation}\label{eqn:triangle}
	\begin{gathered}
	\xymatrix{
	& \sigma_*\omega^* \ar[dr]^{\tau_{\omega^*}} \\
	1 \ar[ur]^{\eta}\ar[rr]^{\eta_\omega} & & \omega_*\omega^*}
	\end{gathered}
	\end{equation}
of functors $\Perv_\cS(S^nX,\kk) \to D^b(S^nX,\kk)$ commutes, where $\eta$ is the unit of the adjunction of Theorem \ref{thm-biadjoint} and $\eta_\omega$ is the unit of the standard adjunction $(\omega^*,\omega_*)$.
\end{lemma}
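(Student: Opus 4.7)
The plan is to reformulate the question via the standard adjunction $(\omega^*, \omega_*)$. Both $(\eta_\omega)_\cF$ and $\tau_{\omega^* \cF} \circ \eta_\cF$ are morphisms $\cF \to \omega_* \omega^* \cF$, and since $\omega$ is an open embedding the adjunction puts such morphisms in bijection with endomorphisms of $\omega^* \cF$. The unit $\eta_\omega$ is characterized by corresponding to $\id_{\omega^* \cF}$ via the triangle identity, so it is enough to verify that $\tau_{\omega^* \cF} \circ \eta_\cF$ also corresponds to $\id_{\omega^* \cF}$ under the standard $(\omega^*, \omega_*)$-adjunction.

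I would deduce this from the following general factorization claim: for every $\cF \in \Perv_\cS(S^n X,\kk)$ and $\cG \in \Perv_\cS(S^\lambda X,\kk)$, the biadjunction isomorphism from Theorem~\ref{thm-biadjoint} factors as
\[
\Hom(\cF, \sigma_* \cG) \xrightarrow{\tau_\cG \circ -} \Hom(\cF, \omega_* \cG) \xrightarrow{\sim} \Hom(\omega^* \cF, \cG),
\]
where the second arrow is the standard $(\omega^*, \omega_*)$-adjunction. Specializing to $\cG = \omega^* \cF$ and applying this factorization to $\eta_\cF$ (which, by the very definition in Theorem~\ref{thm-biadjoint}, corresponds to $\id_{\omega^* \cF}$) then shows that $\tau_{\omega^* \cF} \circ \eta_\cF$ corresponds to $\id_{\omega^* \cF}$, hence equals $(\eta_\omega)_\cF$.

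The factorization claim itself would be proved by a direct diagram chase. Write $\tau_\cG = \sigma_*(\tilde\tau_\cG)$ where $\tilde\tau_\cG = \psi_*(\iota_\cG) \circ \eta^\psi_\cG$ and $\iota_\cG\colon \psi^*\cG \xrightarrow{\sim} \cG$ is the isomorphism of Lemma~\ref{lem:psi isom id}, and compute the standard $(\omega^*, \omega_*)$-adjoint of $\tau_\cG \circ h$ for $h\colon \cF \to \sigma_*\cG$. Expanding via $\omega^* = \psi^* \sigma^*$, $\omega_* = \sigma_* \psi_*$, and $\epsilon^\omega = \epsilon^\psi \circ \psi^*(\epsilon^\sigma)$, then applying naturality of $\epsilon^\sigma$ to $\tilde\tau_\cG$ and naturality of $\epsilon^\psi$ to $\iota_\cG$, reduces everything to the $(\psi^*, \psi_*)$ triangle identity $\epsilon^\psi_{\psi^*\cG} \circ \psi^*(\eta^\psi_\cG) = \id$, after which the composition collapses to $\iota_\cG \circ \psi^*(\epsilon^\sigma_\cG \circ \sigma^*(h))$. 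This is precisely the image of $h$ under the isomorphism described in Theorem~\ref{thm-biadjoint}.

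The main obstacle is purely bookkeeping: correctly distinguishing the units and counits of the three adjunctions $(\sigma^*, \sigma_*)$, $(\psi^*, \psi_*)$, $(\omega^*, \omega_*)$ from the canonical isomorphism $\iota$ of Lemma~\ref{lem:psi isom id}, and invoking naturality and the triangle identity at the right moments. No new geometric input beyond Lemma~\ref{lem:psi isom id} is required.
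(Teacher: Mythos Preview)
Your proposal is correct and follows essentially the same strategy as the paper: both arguments reduce the lemma to checking that the $(\omega^*,\omega_*)$-adjoint of $\tau_{\omega^*}\circ\eta$ is the identity, and both carry this out by decomposing $\omega=\sigma\psi$, using naturality of the counits $\epsilon^\sigma$ and $\epsilon^\psi$, and invoking the $(\psi^*,\psi_*)$ triangle identity together with the description of the biadjunction in Theorem~\ref{thm-biadjoint} (the paper phrases this last input via Corollary~\ref{cor:counit formula}, which is equivalent). Your packaging via the general factorization $\Hom(\cF,\sigma_*\cG)\xrightarrow{\tau_\cG\circ-}\Hom(\cF,\omega_*\cG)\xrightarrow{\sim}\Hom(\omega^*\cF,\cG)$ is a mildly cleaner and more general formulation, but the underlying computation is the same.
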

\begin{proof}
	Consider the following diagram of functors and natural transformations.
	\[\xymatrix@+1.5em{& & \psi^*\psi_*\psi^*\omega^* \ar[dr]^{\epsilon_\psi} & \\
	& \psi^*\omega^* \ar[ur]^{\eta_\psi} & \omega^*\sigma_*\psi_*\psi^*\omega^* \ar[u]^{\epsilon_\sigma}\ar[d]^{\text{Lemma} \;\ref{lem:psi isom id}}\ar[r]^{\epsilon_\omega} & \psi^*\omega \ar[d]^{\text{Lemma} \;\ref{lem:psi isom id}} \\
	\omega^* \ar[r]^{\omega^*\eta} & \omega^*\sigma_*\omega^* \ar[u]^{\epsilon_\sigma}\ar[ur]^{\eta_\psi}\ar[r]^{\omega^*\tau_{\omega^*}} & \omega^*\omega_*\omega^*\ar[r]^{\epsilon_\omega} & \omega^* &}\]
Here the natural transformation $\eta_\psi\colon 1 \to \psi_*\psi^*$  is the unit and $\epsilon_f\colon f^*f_* \to 1$ are the counits of the natural adjunctions, preceded and followed by appropriate functors.
The composition along the lower row is obtained from the transformation $\tau_{\omega^*}\circ\eta\colon 1 \to \omega_*\omega^*$ by adjunction, so the lemma will follow if we can show that it is the identity.  The diagram is commutative, so it is enough to show that the composition going up, over the top, and down is the identity.  But $\epsilon_\psi \eta_\psi$ is the identity, so the result follows from Corollary \ref{cor:counit formula}.	
%	It is enough to show that the composition
%	\[\omega^* \xrightarrow{\omega^*(\tau_{\omega^*} \circ \eta)} \omega^*\omega_*\omega^* \to \omega^*\] is the identity, where the second map is $\epsilon'\colon \omega^*\omega_* \to 1$ applied to $\omega^*$. The transformation $\eta$ is characterized by the fact that 
%	\[\omega^* \xrightarrow{\omega^*(\eta)} \omega^*\sigma_*\omega^* = \psi^*\sigma^*\sigma_*\omega^* \to \psi^*\omega^* \xrightarrow{\sim} \omega^*\]
%	is the identity.  Comparing these, the result follows from the fact that 
%	$\psi^*\to \psi^*\psi_*\psi^* \to \psi^*$ is the identity.
\end{proof}

We want to apply the triangle \eqref{eqn:triangle} to the Hilbert-Chow sheaf $\cF_n$ and take hyperbolic stalks of the result.  Note, however, that $\omega_*\omega^*\cF_n$ is not $\cS$-constructible, so we must be careful which form of hyperbolic restriction we use. We will apply the functor
$H^{0}_{S^n\Sigma}(-)$, which is isomorphic to $\Phi_n$ on $\cS$-constructible objects.
Note that unless the image of $\omega$ hits $S^n\Sigma$, the result of applying this functor to $\omega_*\omega^*\cF_n$ will be zero.  So we will choose our map $\psi$ so that $\psi^{-1}(S^\lambda\Sigma) = S^\lambda\Sigma$, and thus $\omega^{-1}(S^n\Sigma) = S^\lambda\Sigma$. 

We get the following commutative triangle:
	\begin{equation*}
\begin{gathered}
\xymatrix{
	& \llap{$\Phi_n(\cF_\lambda) =$} H^{2n}_{Z^\lambda}(\Hilb^\lambda X) \ar[dr]^{\Phi_n(\tau_{\omega^*})} \\
\Phi_n(\cF_n) = H^{2n}_{Z}(\Hilb^n X) \ar[ur]^<<<<<<<{\Phi_n(\eta)}\ar[rr] & & H^{2n}_{Z\cap\rho_n^{-1}(W)}(\rho_n^{-1}(W))}
\end{gathered}
\end{equation*}
Here $W = \omega(S^\lambda X)$, the horizontal map is restriction, and the map $\Phi(\tau_{\omega^*})$ is the isomorphism induced by the homeomorphism $h\colon  \Hilb^\lambda X \xrightarrow{\sim} \rho_n^{-1}(W)$, as in the proof of Lemma \ref{lem:omega* B}.  The proposition follows immediately from this, because starting with a neighborhood of a smooth point of the component $Z_{\mu_1} \times \dots\times Z_{\mu_r} \subset Z^\lambda$, taking its image under $h$ and mapping into $\Hilb^n X$ gives a neighborhood of a smooth point of $Z_{\mu_1|\dots|\mu_r} \subset Z$.
\end{proof}

%%%%%
\subsection{Finishing the proof} 
%%%%%

Finally, we can finish the proof of Theorem \ref{thm:main isom}.  By Proposition \ref{prop:dimensions agree}, the source and target of $\Theta_{(n)}$ have the same dimension.  So it is enough to show that the elements ${}_\lambda 1_\mu$ and $z_*$ generate the ring $\End(\oplus_{\lambda\in \Part{[n]}} \cF_n)$.

Looking more carefully at the proof of Proposition \ref{prop:dimensions agree}, we see that it in fact gives the following.
\begin{lemma}\label{lem:generating Hom(Fmu,Fla)}
	For any $\lambda, \mu\in \Part{[n]}$, the map
	\[\bigoplus_z \End(\cB_{\lambda\wedge z\mu}) \to \Hom(\cF_\mu, \cF_\lambda)\]
	given by
	\[ \phi\mapsto {}_\mu 1_{z^{-1}\lambda\wedge\mu}\cdot (z^{-1})_* \cdot (\sigma_{\lambda\wedge z\mu})_*(\phi)\cdot  {}_{\lambda\wedge z\mu}1_{\lambda}\]
	is an isomorphism of $\kk$-vector spaces.
\end{lemma}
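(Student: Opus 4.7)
The plan is to recognize the claimed formula as an explicit description of the inverse to the chain of natural isomorphisms already established in the proof of Proposition~\ref{prop:dimensions agree}. That proof constructs $\Hom(\cF_\mu,\cF_\lambda)\xrightarrow{\sim}\bigoplus_z\End(\cB_{\lambda\wedge z\mu})$ through a four-step chain: (i) the biadjunction $(\omega_\lambda)^*\dashv(\sigma_\lambda)_*$ from Theorem~\ref{thm-biadjoint} rewrites $\Hom(\cF_\mu,\cF_\lambda)$ as $\Hom((\omega_\lambda)^*(\sigma_\mu)_*\cB_\mu,\cB_\lambda)$; (ii) proper base change in the Cartesian square~\eqref{eqn:Cartesian diagram} decomposes $(\omega_\lambda)^*(\sigma_\mu)_*\cB_\mu\cong\bigoplus_z(\sigma^\lambda_{\lambda\wedge z\mu})_*(\hat\omega_z)^*\cB_\mu$; (iii) Lemma~\ref{lem:omega* B} identifies $(\hat\omega_z)^*\cB_\mu$ with $\cB_{\lambda\wedge z\mu}$, via the factorization $\hat\omega_z=\omega^\mu_{z^{-1}\lambda\wedge\mu}\circ z^{-1}$; (iv) a second biadjunction $(\sigma^\lambda_{\lambda\wedge z\mu})_*\dashv(\omega^\lambda_{\lambda\wedge z\mu})^*$ together with another application of Lemma~\ref{lem:omega* B} identifies the result with $\End(\cB_{\lambda\wedge z\mu})$. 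Since each step is an isomorphism, so is the composite, and my task reduces to verifying that the inverse of this composite is given precisely by the stated formula.

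I would verify this summand by summand. Fix $z$ and $\phi\in\End(\cB_{\lambda\wedge z\mu})$. Using Corollaries~\ref{cor:unit} and~\ref{cor:counit formula} to express the unit and counit of each biadjunction in terms of the standard $(\sigma^*,\sigma_*)$ adjunction and the canonical trivialization $\psi^*\simeq\id$ of Lemma~\ref{lem:psi isom id}, the inverse of step (iv) sends $\phi$ to the pushforward $(\sigma^\lambda_{\lambda\wedge z\mu})_*(\phi)$ pre-composed with the unit $\cB_\lambda\to(\sigma^\lambda_{\lambda\wedge z\mu})_*\cB_{\lambda\wedge z\mu}$; applying $(\sigma_\lambda)_*$ and using $\sigma_\lambda\circ\sigma^\lambda_{\lambda\wedge z\mu}=\sigma_{\lambda\wedge z\mu}$ reproduces, by the very definition of ${}_{\lambda\wedge z\mu}1_\lambda$, the two factors $(\sigma_{\lambda\wedge z\mu})_*(\phi)\cdot{}_{\lambda\wedge z\mu}1_\lambda$. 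The inverse of step (iii) then contributes the permutation $(z^{-1})_*$, arising from the factorization $\hat\omega_z=\omega^\mu_{z^{-1}\lambda\wedge\mu}\circ z^{-1}$. Finally, inverting steps (ii) and (i) contributes the morphism ${}_\mu 1_{z^{-1}\lambda\wedge\mu}$, since by construction (Section~\ref{sec:hom}) this is precisely the pushforward under $\sigma_\mu$ of the unit of the $((\omega^\mu_{z^{-1}\lambda\wedge\mu})^*,(\sigma^\mu_{z^{-1}\lambda\wedge\mu})_*)$-adjunction paired with Lemma~\ref{lem:omega* B}.

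The principal obstacle is verifying the compatibility between the base change isomorphism of step (ii) and the units/counits of the two biadjunctions appearing in steps (i) and (iv). Concretely, one needs an analogue of Lemma~\ref{lem:adjunction compatibility}, asserting that the proper base change isomorphism intertwines the $((\omega_\lambda)^*,(\sigma_\lambda)_*)$-unit on the $S^nX$ side with the sum over $z$ of $((\sigma^\lambda_{\lambda\wedge z\mu})_*,(\omega^\lambda_{\lambda\wedge z\mu})^*)$-units on the $S^\lambda X$ side. As in the proof of Lemma~\ref{lem:adjunction compatibility}, this compatibility reduces to the canonicity statement in Lemma~\ref{lem:omega* B} together with the simply-connected homotopy invariance of Lemma~\ref{lem:stratified homotopy}, applied to a family of $(\lambda\wedge z\mu,n)$-separated maps parameterized by $\Emb_n(X,\lambda\wedge z\mu)$; all other intermediate checks (base change compatibility, commuting past $z_*$, etc.) are routine diagram chases once this is in place. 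Bijectivity is then immediate from the chain of isomorphisms in Proposition~\ref{prop:dimensions agree}.
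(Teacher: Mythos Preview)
Your approach is correct and is essentially the same as the paper's: the paper's entire proof is the single sentence ``Looking more carefully at the proof of Proposition~\ref{prop:dimensions agree}, we see that it in fact gives the following,'' and you are carrying out exactly that unwinding in detail.

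One remark on your ``principal obstacle'': the compatibility you isolate---that the base change isomorphism in the square~\eqref{eqn:Cartesian diagram} intertwines the unit/counit of the biadjunctions on the two sides---is not new work but is precisely what was already established in Lemma~\ref{lem:adjunction compatibility} and exploited in Section~5.4 when verifying relation~(d). Indeed, the computation there shows that the composite of the unit of $((\omega_\lambda)^*,(\sigma_\lambda)_*)$ with the base change isomorphism, pushed forward to $S^nX$, is exactly $\sum_z {}_{\lambda\wedge z\mu}1_\lambda$ (and dually for the other side). So rather than proving a fresh analogue, you can simply cite that computation; the present lemma is a repackaging of the same identity with an endomorphism $\phi$ inserted in the middle.
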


On the other hand, for any partition $\nu \in \Part{[n]}$, we can apply 
Proposition \ref{prop:basis of End Fn} to each factor to get a basis for 
\[\End(\cB_\nu) \cong \End(\cF_{(\hat\nu_1)})\times \dots\times \End(\cF_{(\hat\nu_s)}).\]
Applying $(\sigma_\nu)_*$ gives the following.
\begin{lemma}
	A basis for $(\sigma_\nu)_*\End(\cB_\nu) \subset \End(\cF_\nu)$ is given by elements ${}_\nu 1_\eta \cdot {}_\eta 1_\nu$, where $\eta$ runs over a set of representatives for $\Part{\nu}/\fS_\nu$, i.e.\ one $\eta \le \nu$ for each choice of a tuple of integer partitions of $\hat\nu_1, \dots, \hat\nu_s$. 
\end{lemma}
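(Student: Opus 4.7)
The plan is to transfer the K\"unneth basis of $\End(\cB_\nu)$ through $(\sigma_\nu)_*$ and identify the images with the claimed endomorphisms of $\cF_\nu$. Since $\cB_\nu = \cF_{(\hat\nu_1)}\boxtimes\cdots\boxtimes\cF_{(\hat\nu_s)}$, the K\"unneth formula yields $\End(\cB_\nu) \cong \bigotimes_{i=1}^{s}\End(\cF_{(\hat\nu_i)})$. Proposition~\ref{prop:basis of End Fn} applied to each factor $S^{\hat\nu_i}X$ provides a $\kk$-basis $\{{}_{(\hat\nu_i)}1_{\mu_i}\cdot{}_{\mu_i}1_{(\hat\nu_i)}\mid \mu_i\in\Part{\hat\nu_i}\}$ of $\End(\cF_{(\hat\nu_i)})$, and tensoring gives a basis of $\End(\cB_\nu)$ indexed by tuples $(\mu_1,\dots,\mu_s)$ of integer partitions, which under the fixed embeddings $\Part{\hat\nu_i}\hookrightarrow\Part{\nu_i}$ is precisely a set of representatives for $\Part{\nu}/\fS_\nu$.

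Next, Lemma~\ref{lem:generating Hom(Fmu,Fla)} with $\lambda=\mu=\nu$ identifies the $z=e$ summand of its direct sum decomposition with the image of the map $(\sigma_\nu)_*\colon\End(\cB_\nu)\to\End(\cF_\nu)$, and in particular implies that this map is injective. Therefore $(\sigma_\nu)_*$ carries the basis constructed above to a basis of the subspace $(\sigma_\nu)_*\End(\cB_\nu)\subset\End(\cF_\nu)$.

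It remains to show that for each tuple $(\mu_1,\dots,\mu_s)$ with associated set partition $\eta\le\nu$ (whose restriction to the $i$th part of $\nu$ is $\mu_i$), we have
\[
(\sigma_\nu)_*\!\Bigl[\,\boxtimes_{i=1}^{s}\bigl({}_{(\hat\nu_i)}1_{\mu_i}\cdot{}_{\mu_i}1_{(\hat\nu_i)}\bigr)\Bigr]
\;=\;
{}_\nu 1_\eta \cdot {}_\eta 1_\nu.
\]
Because $\eta\le\nu$, the symmetrization map $\sigma^\nu_\eta\colon S^\eta X\to S^\nu X$ factors as $\prod_i\sigma^{(\hat\nu_i)}_{\eta_i}$, and we may take a compatible factorization map $\omega^\nu_\eta$ which also splits as a product. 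The biadjunction $((\omega^\nu_\eta)^*,(\sigma^\nu_\eta)_*)$ of Theorem~\ref{thm-biadjoint} is then the external product of the factor adjunctions, so its unit and counit evaluated on $\cB_\nu$ are external products of the factorwise units and counits; pushing forward by $(\sigma_\nu)_*$ and using $\sigma_\nu\circ\sigma^\nu_\eta=\sigma_\eta$ together with the identification $(\omega^\nu_\eta)^*\cB_\nu\cong\cB_\eta$ from Lemma~\ref{lem:omega* B} converts the external product of factorwise compositions into ${}_\nu 1_\eta\cdot{}_\eta 1_\nu$. This last step is the main obstacle; while essentially bookkeeping, it requires carefully tracking all of these identifications through the construction of both ${}_\eta 1_\nu$ and ${}_\nu 1_\eta$.
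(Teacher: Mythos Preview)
Your proposal is correct and follows essentially the same approach as the paper: apply Proposition~\ref{prop:basis of End Fn} to each factor of $\End(\cB_\nu)\cong\prod_i\End(\cF_{(\hat\nu_i)})$ and push forward by $(\sigma_\nu)_*$. The paper's proof is a single sentence leaving the details implicit, whereas you spell out the injectivity of $(\sigma_\nu)_*$ via the $z=e$ summand in Lemma~\ref{lem:generating Hom(Fmu,Fla)} and the identification of the pushed-forward K\"unneth basis with ${}_\nu 1_\eta\cdot{}_\eta 1_\nu$ via the product structure of $\sigma^\nu_\eta$ and $\omega^\nu_\eta$; these are exactly the points the paper tacitly relies on.
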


In particular the elements $(\sigma_{\lambda\wedge z\mu})_*(\phi)$ appearing in Lemma \ref{lem:generating Hom(Fmu,Fla)} can be expressed in terms of our generators.  This implies that $\Theta$ is surjective, and thus completes the proof of Theorem \ref{thm:main isom}.

%%%%%
\section{The symmetrization and factorization functors} \label{sec:functors}
%%%%%
Fix $\mu \in \Part{[n]}$.  We have made heavy use of the biadjoint exact functors
\[\xymatrix@C+25pt{\Perv_\cS(S^n X,\kk)\ar@<.5ex>[r]^{(\omega_\mu)^*}
& \Perv_\cS(S^\mu X,\kk)\ar@<.5ex>[l]^{(\sigma_\mu)_*}}.\]
In this section we will describe these functors in terms of our algebraic description of these categories.

Our main theorem gives an equivalence of categories 
\[\Perv_\cS(S^nX,\kk) \cong (R_{[n]}\otimes \kk)\mmod\]
(we could also use the smaller Morita equivalent algebra $R_n$, but it will be simpler to work with all set partitions rather than a set of representatives).

\begin{theorem}
	There is an equivalance of categories
	\[\Perv_\cS(S^\mu X,\kk) \cong (R_{[\hat\mu_1]}\otimes \dots\otimes R_{[\hat\mu_r]} \otimes \kk)\mmod.\]
\end{theorem}
\begin{proof}
	By Proposition \ref{prop-projgen}, $\cF^\mu = \bigoplus_{\lambda\le \mu} \cF^\mu_\lambda$
	is a projective generator of $\Perv_\cS(S^\mu X,\kk)$.  A partition $\lambda\le \mu$ is given by set partitions $\lambda_i \in \Part{[\hat\mu_i]}$, $i=1,\dots,r$, and under the isomorphism $S^\mu X \cong S^{\hat\mu_1}X \times \dots \times S^{\hat\mu_r}X$, we have 
	\[\cF^\mu_\lambda \cong \cF_{\lambda_1} \boxtimes \dots \boxtimes \cF_{\lambda_r}.\]
	This gives 
	\[\cF^\mu \cong \cF^{(\hat{\mu}_1)} \boxtimes \dots \boxtimes \cF^{(\hat{\mu}_r)},\]
	and so we have
	\begin{align*}
		\Perv_\cS(S^\mu X,\kk) & \cong \End(\cF^\mu)\mmod \\
		 & \cong \End(\cF^{(\hat{\mu}_1)}) \otimes \dots \otimes \End(\cF^{(\hat{\mu}_1)})\mmod\\
		 & \cong  (R_{[\hat\mu_1]}\otimes \dots\otimes R_{[\hat\mu_r]} \otimes \kk)\mmod.\qedhere	\end{align*}
	 
\end{proof}

To describe the right side of this equivalence more precisely, we need a relative version of the $2$-category of Section \ref{sec:2-category}.
Define the
$2$-category $\cC_\mu$ to have objects all set partitions
$\lambda \le \mu$.  For objects $\lambda, \nu$, the $1$-morphisms are the category
\[\hom_{\cC_\mu}(\lambda,\nu) = \Sh([\fS_\nu\backslash \fS_\mu/\fS_\lambda]),\]
and composition is defined in the same way as before.  Then it is easy to see that the Grothendieck ring $R_\mu := K(\cC_\mu)$ is isomorphic to 
$R_{[\hat\mu_1]}\otimes\dots \otimes R_{[\hat\mu_r]}$.

We have a natural $2$-functor $\iota_\mu\colon \cC_\mu \to \cC_n$ which sends an object $\lambda \in \operatorname{Ob} \cC_\mu$ to the same partition in $\operatorname{Ob} \cC_n$, and on $1$-morphisms is given by the pushforward functors
\[\Sh([\fS_\nu\backslash \fS_\mu/\fS_\lambda])\to \Sh([\fS_\nu\backslash \fS_n/\fS_\lambda])\]
induced by the inclusion $\fS_\mu \to \fS_n$.  This induces a homomorphism of rings $R_\mu \to R_{[n]}$, which is easily seen to be an inclusion.  It identifies $R_\mu$ with the subalgebra of $R_{[n]}$ generated by 
\begin{itemize}
	\item idempotents $e_\lambda$ for $\lambda \le \mu$,
	\item ${}_\nu 1_\lambda$ for $\nu\le \lambda\le \mu$ and $\lambda \le \nu\le \mu$, and
	\item $z_*$ for $z \in \fS_\mu$.
\end{itemize} 
Note that this is properly contained in the algebra $eR_{[n]}e$ where $e = \sum_{\lambda \le \mu} e_\lambda$, because of the restriction that $z$ lie in $\fS_\mu$.  From now on we will use this inclusion to consider $R_\mu$ as a subalgebra of $R_{[n]}$.

\begin{theorem}\label{thm-interp}
	Under the identifications of $\Perv_\cS(S^nX,\kk)$ with $R_{[n]}\otimes \kk\mmod$ and 
	$\Perv_\cS(S^\mu X,\kk)$ with $R_\mu\otimes \kk\mmod$, the functor 
	$(\omega_\mu)^*$ is given by 
	\[M \mapsto eM = eR_{[n]} \otimes_{R_{[n]}} M,\] 
	where the $(eR_{[n]}e \otimes \kk)$-module $eM$ is restricted to a module for $R_{\mu}\otimes \kk$, and $eR_{[n]}$ is considered as an $R_\mu$-$R_{[n]}$ bimodule.
	
	The functor $(\sigma_\mu)_*$ is given by:
	\[M \mapsto \Hom_{R_\mu}(eR_{[n]}, M).\]
\end{theorem}

\begin{proof}
	The algebra $R_\mu$ is isomorphic to the endomorphism algebra of the projective generator $\oplus_{\lambda \le \mu} \cF^\mu_\lambda$ in $\Perv_{\cS}(S^\mu X,\kk)$.  Then for $\cG \in \Perv_\cS(S^nX,\kk)$, adjunction gives
	\[\Hom_{\Perv_\cS(S^\mu X,\kk)}(\oplus_{\lambda \le \mu} \cF^\mu_\lambda, (\omega_\mu)^*\cG) = \Hom_{\Perv_\cS(S^nX, \kk)}(\oplus_{\lambda\le \mu}(\sigma_\mu)_*\cF^\mu_\lambda, \cG).\]
	But $(\sigma_\mu)_*\cF^\mu_\lambda = (\sigma_\mu)_*(\sigma^\mu_\lambda)_*\cB_\lambda = \cF_\lambda$.  So our functor is given by passing from 
	$\Hom(\oplus_{\lambda\in \Part{[n]}} \cF_\lambda,\cG)$ to $\Hom(\oplus_{\lambda\le\mu} \cF_\lambda,\cG)$, which is just multiplication by the idempotent $e$, followed by pullback by the homomorphism
	\[(\sigma_\mu)_*\colon \End_{\Perv_\cS(S^\mu X,\kk)}(\oplus_{\lambda \le \mu} \cF^\mu_\lambda) \to \End_{\Perv_\cS(S^nX, \kk)}(\oplus_{\lambda \le \mu} \cF_\lambda) \cong eR_{[n]}e\otimes \kk.\]
	So we want to show that $(\sigma_\mu)_*$ induces the inclusion of $R_\mu\otimes \kk$ into $eR_{[n]}e \otimes \kk$ described above.  This is easy to check on generators.  The idempotent $e_\lambda$, $\lambda\le \mu$ is clearly sent to itself, since they are projections onto the summand $\cF^\mu_\lambda$ or $\cF_\lambda$, respectively.  
	
	For $\nu\le \lambda\le \mu$, the morphism ${}_\nu 1_{\lambda}\colon \cF^\mu_\lambda \to \cF^\mu_\nu$ is given by applying $(\sigma^\mu_\lambda)_*$ to the unit morphism
	$\cB_\lambda \to (\sigma^\lambda_\nu)_*(\omega^\lambda_\nu)^*\cB_\lambda \cong (\sigma^\lambda_\nu)_*\cB_\nu.$
	Then the isomorphism $(\sigma_\mu)_*(\sigma^\mu_\lambda)_* \cong (\sigma_\lambda)_*$ immediately shows that applying $(\sigma_\mu)_*$ to ${}_\nu 1_{\lambda}$ gives the morphism with the same name in   $\End_{\Perv_\cS(S^nX, \kk)}(\oplus_{\lambda \le \mu} \cF_\lambda)$.
	
	For $z\in \fS_\mu$, the morphism ${}_{z\lambda}(z_*)_{\lambda}\colon \cF^\mu_\lambda \to \cF^\mu_{z\lambda}$ is obtained by applying $(\sigma_\lambda^\mu)_*$ to the morphism $z_*\cB_\lambda \to \cB_{z\lambda}$.  Applying $(\sigma_\mu)_*$ and using $(\sigma_\mu)_*(\sigma^\mu_\lambda)_* \cong (\sigma_\lambda)_*$ immediately shows that it is sent to the morphism  $\cF_{\lambda}\to \cF_{z\lambda}$
	with the same name.  	 
	 
	 This proves the first part of the theorem.  The description of $(\sigma_\mu)_*$ then follows by the uniqueness of adjoint functors.
\end{proof}

\bibliography{./refs.bib}

\def\cprime{$'$} \newcommand{\arxiv}[1]{\href{http://arxiv.org/abs/#1}{\tt
  arXiv:\nolinkurl{#1}}}
\providecommand{\bysame}{\leavevmode\hbox to3em{\hrulefill}\thinspace}
\providecommand{\MR}{\relax\ifhmode\unskip\space\fi MR }
% \MRhref is called by the amsart/book/proc definition of \MR.
\providecommand{\MRhref}[2]{%
  \href{http://www.ams.org/mathscinet-getitem?mr=#1}{#2}
}
\providecommand{\href}[2]{#2}
\begin{thebibliography}{BLPW16}

\bibitem[Ach21]{AcharBook}
Pramod~N. Achar, \emph{Perverse sheaves and applications to representation
  theory}, Mathematical Surveys and Monographs, vol. 258, American Mathematical
  Society, Providence, RI, 2021.

\bibitem[AHR15]{AHR}
Pramod~N. Achar, Anthony Henderson, and Simon Riche, \emph{Geometric {S}atake,
  {S}pringer correspondence, and small representations {II}}, Represent. Theory
  \textbf{19} (2015), 94--166.

\bibitem[AM15]{AM}
Pramod~N. Achar and Carl Mautner, \emph{Sheaves on nilpotent cones, {F}ourier
  transform, and a geometric {R}ingel duality}, Mosc. Math. J. \textbf{15}
  (2015), no.~3, 407--423, 604. \MR{3427432}

\bibitem[BBD82]{BBD}
A.~A. Beilinson, J.~Bernstein, and P.~Deligne, \emph{Faisceaux pervers},
  Analysis and topology on singular spaces, {I} ({L}uminy, 1981), Ast\'erisque,
  vol. 100, Soc. Math. France, Paris, 1982, pp.~5--171.

\bibitem[BGS96]{BGS}
Alexander Beilinson, Victor Ginzburg, and Wolfgang Soergel, \emph{Koszul
  duality patterns in representation theory}, J. Amer. Math. Soc. \textbf{9}
  (1996), no.~2, 473--527.

\bibitem[BK04]{BezKal}
R.~V. Bezrukavnikov and D.~B. Kaledin, \emph{Mc{K}ay equivalence for symplectic
  resolutions of quotient singularities}, Tr. Mat. Inst. Steklova \textbf{246}
  (2004), no.~Algebr. Geom. Metody, Svyazi i Prilozh., 20--42.

\bibitem[BKR01]{BKR}
Tom Bridgeland, Alastair King, and Miles Reid, \emph{The {M}c{K}ay
  correspondence as an equivalence of derived categories}, J. Amer. Math. Soc.
  \textbf{14} (2001), no.~3, 535--554.

\bibitem[BLPW12]{BLPWtorico}
Tom Braden, Anthony Licata, Nicholas Proudfoot, and Ben Webster,
  \emph{Hypertoric category $\mathcal{O}$}, Adv. Math. \textbf{231} (2012),
  no.~3-4, 1487--1545.

\bibitem[BLPW16]{BLPWgco}
\bysame, \emph{Quantizations of conical symplectic resolutions $\text{II}$:
  category $\mathcal{O}$ and symplectic duality}, Ast\'erisque \textbf{384}
  (2016), 75--179.

\bibitem[BM81]{BoMcP81}
Walter Borho and Robert MacPherson, \emph{Repr\'{e}sentations des groupes de
  {W}eyl et homologie d'intersection pour les vari\'{e}t\'{e}s nilpotentes}, C.
  R. Acad. Sci. Paris S\'{e}r. I Math. \textbf{292} (1981), no.~15, 707--710.

\bibitem[BM83]{BoMcP}
\bysame, \emph{Partial resolutions of nilpotent varieties}, Analysis and
  topology on singular spaces, {II}, {III} ({L}uminy, 1981), Ast\'{e}risque,
  vol. 101, Soc. Math. France, Paris, 1983, pp.~23--74.

\bibitem[BM17]{BMMatroid}
Tom Braden and Carl Mautner, \emph{Matroidal {S}chur algebras}, J. Algebraic
  Combin. \textbf{46} (2017), no.~1, 51--75.

\bibitem[BM19]{BMHyperRingel}
\bysame, \emph{Ringel duality for perverse sheaves on hypertoric varieties},
  Adv. Math. \textbf{344} (2019), 35--98.

\bibitem[Bon06a]{Bon06}
C\'{e}dric Bonnaf\'{e}, \emph{A note on the {G}rothendieck ring of the
  symmetric group}, C. R. Math. Acad. Sci. Paris \textbf{342} (2006), no.~8,
  533--538.

\bibitem[Bon06b]{Bon08}
\bysame, \emph{On the character ring of a finite group}, Alg\`ebre et
  th\'{e}orie des nombres. {A}nn\'{e}es 2003--2006, Publ. Math. Univ.
  Franche-Comt\'{e} Besan\c{c}on Alg\`ebr. Theor. Nr., Lab. Math. Besan\c{c}on,
  Besan\c{c}on, 2006, pp.~5--23.

\bibitem[Bra03]{Br03}
Tom Braden, \emph{Hyperbolic localization of intersection cohomology},
  Transform. Groups \textbf{8} (2003), no.~3, 209--216.

\bibitem[Bri77]{Bri77}
Jo\"{e}l Brian\c{c}on, \emph{Description de
  {$\operatorname{Hilb}^{n}C\{x,y\}$}}, Invent. Math. \textbf{41} (1977),
  no.~1, 45--89.

\bibitem[CG97]{CG}
Neil Chriss and Victor Ginzburg, \emph{Representation theory and complex
  geometry}, Birkh\"auser Boston, Inc., Boston, MA, 1997.

\bibitem[EGNO15]{EGNO}
Pavel Etingof, Shlomo Gelaki, Dmitri Nikshych, and Victor Ostrik, \emph{Tensor
  categories}, Mathematical Surveys and Monographs, vol. 205, American
  Mathematical Society, Providence, RI, 2015.

\bibitem[Gro96]{Groj}
I.~Grojnowski, \emph{Instantons and affine algebras. {I}. {T}he {H}ilbert
  scheme and vertex operators}, Math. Res. Lett. \textbf{3} (1996), no.~2,
  275--291. \MR{1386846}

\bibitem[GS06]{GS}
I.~Gordon and J.~T. Stafford, \emph{Rational {C}herednik algebras and {H}ilbert
  schemes. {II}. {R}epresentations and sheaves}, Duke Math. J. \textbf{132}
  (2006), no.~1, 73--135.

\bibitem[Hai01]{Haim}
Mark Haiman, \emph{Hilbert schemes, polygraphs and the {M}acdonald positivity
  conjecture}, J. Amer. Math. Soc. \textbf{14} (2001), no.~4, 941--1006.

\bibitem[Iar77]{I77}
Anthony~A. Iarrobino, \emph{Punctual {H}ilbert schemes}, Mem. Amer. Math. Soc.
  \textbf{10} (1977), no.~188, viii+112.

\bibitem[Jut14]{Juteau}
D.~Juteau, \emph{Modular {S}pringer correspondence, decomposition matrices and
  basic sets}, arXiv:1410.1471, 2014.

\bibitem[Kal06]{Kal06}
D.~Kaledin, \emph{Symplectic singularities from the {P}oisson point of view},
  J. Reine Angew. Math. \textbf{600} (2006), 135--156.

\bibitem[Knu]{knudsen2018configuration}
Ben Knudsen, \emph{Configuration spaces in algebraic topology}, preprint,
  \url{https://arxiv.org/abs/1803.11165}.

\bibitem[Loj64]{Loj}
S.~Lojasiewicz, \emph{Triangulation of semi-analytic sets}, Ann. Scuola Norm.
  Sup. Pisa Cl. Sci. (3) \textbf{18} (1964), 449--474.

\bibitem[Mau14]{Mautner}
Carl Mautner, \emph{A geometric {S}chur functor}, Selecta Math. (N.S.)
  \textbf{20} (2014), no.~4, 961--977.

\bibitem[MV07]{MV}
I.~Mirkovi{\'c} and K.~Vilonen, \emph{Geometric {L}anglands duality and
  representations of algebraic groups over commutative rings}, Ann. of Math.
  (2) \textbf{166} (2007), no.~1, 95--143.

\bibitem[Nak97]{Nak97}
Hiraku Nakajima, \emph{Heisenberg algebra and {H}ilbert schemes of points on
  projective surfaces}, Ann. of Math. (2) \textbf{145} (1997), no.~2, 379--388.
  \MR{1441880}

\bibitem[Nak99]{NakBook}
\bysame, \emph{Lectures on {H}ilbert schemes of points on surfaces}, University
  Lecture Series, vol.~18, American Mathematical Society, Providence, RI, 1999.
  \MR{1711344}

\bibitem[Sag01]{Sag-book}
Bruce~E. Sagan, \emph{The symmetric group}, second ed., Graduate Texts in
  Mathematics, vol. 203, Springer-Verlag, New York, 2001, Representations,
  combinatorial algorithms, and symmetric functions. \MR{1824028}

\bibitem[Spr82]{Springer}
T.~A. Springer, \emph{Quelques applications de la cohomologie d'intersection},
  Bourbaki {S}eminar, {V}ol. 1981/1982, Ast\'{e}risque, vol.~92, Soc. Math.
  France, Paris, 1982, pp.~249--273.

\bibitem[Spr84]{SprContraction}
\bysame, \emph{A purity result for fixed point varieties in flag manifolds}, J.
  Fac. Sci. Univ. Tokyo Sect. IA Math. \textbf{31} (1984), no.~2, 271--282.

\end{thebibliography}
\bibliographystyle{amsalpha}

\end{document}